\DeclareMathOperator{\pr}{pr}
\DeclareMathOperator{\Sh}{Sh}
\DeclareMathOperator{\CH}{CH}
\DeclareMathOperator{\Cer}{Cer}
\DeclareMathOperator{\Pic}{Pic}
\DeclareMathOperator{\Fr}{Fr}
\newtheorem{theorem}{Theorem}
\newtheorem{proposition}[theorem]{Proposition}
\newtheorem{corollary}[theorem]{Corollary}
\theoremstyle{theorem}
\newtheorem*{conjecture}{Conjecture}
\theoremstyle{definition}
\newtheorem*{notation}{Notation}
\newtheorem{definition}{Definition}
\newtheorem*{Ack}{Acknowledgements}
\newtheorem*{eg}{Example}
\newcommand{\Q}{\mathbb{Q}}
\newcommand{\tors}{{\operatorname{tors}}}
\title{Ceresa cycles of $X_{0}( N)$ }
\author{Elvira Lupoian and James Rawson}
\address{Elvira Lupoian \\ Department of Mathematics, University College London, London \\ United Kingdom}
\email{e.lupoian@ucl.ac.uk}
\address{James Rawson \\ Mathematics Institute, University of Warwick, Coventry \\ United Kingdom}
\email{james.rawson@warwick.ac.uk}
\subjclass[2000]{14C25, 14G35, 11G18}
\keywords{Modular Curves, Ceresa Cycle, Gross-Kudla-Schoen modified diagonal cycle, Chow-Heegner Divisor}
\begin{document}

\begin{abstract}
The Ceresa cycle is an algebraic 1-cycle on the Jacobian of an algebraic curve. Although it is homologically trivial, Ceresa famously proved that for a very general complex curve of genus at least 3, it is non-trivial in the Chow group. In this paper we study the Ceresa cycle attached to the complete modular curve $X_{0}(N)$ modulo rational equivalence. For prime level $p$ we give a complete description, namely we prove that if $X_{0}(p)$ is not hyperelliptic, then its Ceresa cycle is non-torsion. For general level $N$, we prove that there are finitely many $X_{0}(N)$ with torsion Ceresa cycle. Our method relies on the relationship between the vanishing of the Ceresa cycle and Chow-Heegner points on the Jacobian. We use the geometry and arithmetic of modular Jacobians to prove that such points are of infinite order and therefore deduce non-vanishing of the Ceresa cycle.
\end{abstract}
\maketitle

\section{Introduction}
Let $C$ be a smooth, projective and geometrically integral curve, defined over an algebraically closed field $K$. Fix a degree $1$ base point divisor $e$ and let $\iota_{e}$ be  the corresponding Abel-Jacobi map, which embeds the curve into its Jacobian $J$. The \textit{Ceresa cycle} associated to $(C,e)$ is classically defined as
\begin{center}
    $\Cer(C,e) := [\iota_{e}(C)] - [-1]^* [\iota_{e}(C)] $
\end{center}
which is an element of the Chow group $CH_{1}(J)$, modulo rational equivalence.
This cycle is always homologically trivial, that is, it lies in the kernel of the cycle class map form $CH_{1}(J)$ into a Weil cohomology theory, but crucially it is not always trivial in the Chow group. In fact, for a very general complex curve of genus at least 3, Ceresa \cite{ceresa1983c} proved that it is algebraically, and in particular rationally, non-trivial independent of the choice of base point divisor. 

In the case of hyperelliptic curves, taking $e$ to be a Weierstrass point shows that $\Cer(C,e) =0 $. Few other explicit examples are known. Some families with torsion Ceresa cycle have been studied by Qiu and Zhang \cite{Qiu_2024}, \cite{qiu2023vanishingresultsmodifieddiagonal}. Buhler, Schoen and Top \cite{buhler1997cycles} studied the Ceresa cycle in conjunction with the Beilinson-Bloch conjectures. Laga and Shnidman \cite{laga2023ceresa} treated the case of bielliptic Picard curves. In recent work,  Ellenberg, Logan and Srinivasan \cite{ellenberg2024certifyingnontrivialityceresaclasses} developed an algorithm for verifying non-vanishing of the Ceresa cycle, which is guaranteed to terminate if the Sato-Tate group is GSp. 
The Ceresa cycle has been widely studied for Fermat curves and their quotients \cite{kimura2000modified}, \cite{tadokoro2016harmonic}, \cite{otsubo2012abel}, \cite{lilienfeldt2023experiments}, \cite{bloch1984algebraic}, \cite{harris1983homological}. In particular, Eskandari and Murty \cite{eskandari2021ceresa} proved that for any prime $p >7$, the Ceresa cycle associated to the Fermat curve $x^{p} + y^{p} = z^{p}$ is non-torsion for any choice of base point. The proof relies on the close relation between Chow-Heegner divisors, first constructed by Darmon, Rotger and Sols \cite{darmon2012iterated}, and vanishing of the Ceresa cycle; a relation which is also crucial in our argument. 

In this paper we study a classically interesting  family of curves, namely the modular curves whose non-cuspidal points parametrise isomorphism classes of elliptic curves with a marked isogeny. We study the Ceresa cycle  of $X_{0}(N)$ inside $\CH_1(J_{0}(N)) \otimes \mathbb{Q}$, henceforth simply $\CH_1(J_{0}(N))$. As detailed in Theorem \ref{zhang} (c.f. \cite{zhang2010gross}), the Ceresa cycle can only possibly vanish for one choice of basepoint divisor class, namely $e = \frac{K_{C}}{2g -2}$, where $K_{C}$ is the canonical class and $g$ is the genus of the curve. For the rest of this paper we write  $\Cer(N)$ for $\Cer\left(X_0(N), \frac{K_{C}}{2g -2}\right)$. Recently, a large family of modular curves was studied by Kerr, Li, Qiu and Yang \cite{kerr2024non}. That work proves that the Ceresa cycle associated to the complete modular curve corresponding to the congruence subgroup $\Gamma_N := \Gamma(2) \cap \Gamma_{1}(2N)$ is non-zero in $\CH_1(J_{\Gamma_N})$, for $N$ sufficiently large. We remark that their results are implied by our work, combined with a statement about Ceresa cycles and covers (see Proposition~\ref{prop:pushforward}). The main results of our paper are the following.

\begin{theorem} \label{THM1}
The Ceresa cycle $\Cer(p)$ is non-zero if and only if $X_0(p)$ is not hyperelliptic. That is, if $p > 71$ or $p \in \{43, 53, 61, 67\}$, then $\Cer(p)$ is non-trivial.
\end{theorem}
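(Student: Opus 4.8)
The proof falls into the two implications of the biconditional, with essentially all of the content in the forward direction.

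\emph{The ``only if'' direction.} If $g:=g(X_0(p))\le 2$ there is nothing to prove: $\Cer(p)=0$ for trivial reasons, the curve being hyperelliptic when $g=2$ and $J_0(p)$ having dimension $\le 1$ when $g\le 1$. If $X_0(p)$ is hyperelliptic of genus $g\ge 3$, then $K_{X_0(p)}\sim (2g-2)[W]$ for any Weierstrass point $W$, so the distinguished basepoint class defining $\Cer(p)$ is exactly $[W]$; the hyperelliptic involution $\sigma$ fixes $W$, and since $x+\sigma x$ lies in the $g^1_2$ we get $\iota_W(\sigma x)=[2W-x-W]=-\iota_W(x)$, i.e. $\iota_W\circ\sigma=[-1]\circ\iota_W$, whence $[-1]^{*}[\iota_W(X_0(p))]=[\iota_W(\sigma X_0(p))]=[\iota_W(X_0(p))]$ and $\Cer(p)=0$. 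Combining Ogg's classification of hyperelliptic $X_0(N)$ at prime level with the genus formula identifies the non-hyperelliptic levels as precisely $p>71$ together with $p\in\{43,53,61,67\}$, giving the asserted list.

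\emph{The ``if'' direction.} Assume $X_0(p)$ is not hyperelliptic, so $g\ge 3$, and suppose for contradiction that $\Cer(p)=0$. By the standard relation between the Ceresa cycle and the Gross--Kudla--Schoen modified diagonal cycle (Gross--Schoen; cf.\ Theorem~\ref{zhang}), the class $\Delta_e\in\CH^2\big(X_0(p)^3\big)_{\Q}$ attached to the basepoint $e=\tfrac{K_{X_0(p)}}{2g-2}$ then vanishes as well. The plan is to invoke the Chow--Heegner construction of Darmon--Rotger--Sols: to a correspondence $Z\in\CH^1\big(X_0(p)\times X_0(p)\big)_{\Q}$ one associates the Chow--Heegner divisor class
\[
P_Z\;:=\;\pr_{3,*}\big(\pr_{12}^{*}[Z]\cdot\Delta_e\big)\;\in\;\Pic^{0}\big(X_0(p)\big)\otimes\Q\;=\;J_0(p)(\overline{\Q})\otimes\Q,
\]
which is well defined because $\Delta_e$ is homologically trivial (hence so is the product), and which depends $\Q$-linearly on $\Delta_e$ in the Chow group; consequently the vanishing of $\Delta_e$ forces $P_Z=0$ for \emph{every} correspondence $Z$. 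It therefore suffices to exhibit a single $Z$ with $P_Z$ of infinite order.

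\emph{The main obstacle.} The crux is to choose $Z$ so that $P_Z$ is recognisably non-torsion, and this is where the geometry and arithmetic of $J_0(p)$ enter. I would take $Z$ supported on CM divisors --- equivalently, apply Hecke correspondences to a fixed Heegner divisor --- so that, after projecting via the Eichler--Shimura relation onto the $f$-isotypic quotient $A_f$ of $J_0(p)$ attached to a newform $f\in S_2(\Gamma_0(p))$, the class $P_Z$ becomes a nonzero $\Q$-multiple of a genuine Heegner point on $A_f$ over a ring class field of an imaginary quadratic field $F$ satisfying the Heegner hypothesis for $p$. Establishing this identification --- in effect a Gross--Kudla--Schoen-to-Gross--Zagier comparison, with careful bookkeeping of the normalisation $e=\tfrac{K_{X_0(p)}}{2g-2}$ and of the degrees in $\pr_{3,*}$ --- is the technical heart of the argument and the step I expect to be hardest. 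Granting it, one finishes by choosing, for each non-hyperelliptic level $p$, a newform $f$ of level $p$ and a Heegner datum for which the relevant Heegner point has infinite order: this follows either from the Gross--Zagier formula together with the non-vanishing theorems for (derivatives of) quadratic-twist $L$-functions (Bump--Friedberg--Hoffstein, Murty--Murty) and Kolyvagin's theorem, or --- uniformly in $p$ --- from Cornut--Vatsal's non-vanishing of Heegner points along ring class towers, while the four small levels $p\in\{43,53,61,67\}$ may alternatively be treated by direct computation. This contradicts $P_Z=0$, so $\Cer(p)\neq 0$.
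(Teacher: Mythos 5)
Your ``only if'' direction and the general reduction to Chow--Heegner points are fine, and match the paper's setup: vanishing of $\Cer(p)$ forces the Gross--Kudla--Schoen class to vanish (Theorem~\ref{zhang}), hence forces every shadow/Chow--Heegner point $P_Z$ to be torsion, so it suffices to exhibit one correspondence with non-torsion $P_Z$. The gap is in the step you yourself flag as the technical heart: the identification of $P_Z$ with a nonzero multiple of a genuine Heegner point on some $A_f$, to which Gross--Zagier, Kolyvagin, or Cornut--Vatsal could be applied. For prime level this identification is not available. The correspondences at your disposal are essentially the Hecke algebra, and the resulting Chow--Heegner divisors are supported on the \emph{fixed-point divisors} of Hecke operators: full Galois orbits of CM points of small, fixed discriminant (e.g.\ $-3,-4,-7,-8,-12,-16$ for $T_2$ and $T_3$, or $-p,-4p$ for $w_p$) together with cusps, combined with $p$-dependent coefficients. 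These are not Heegner divisors in the Gross--Zagier sense: the quadratic order is dictated by the operator rather than chosen to satisfy a Heegner hypothesis, the class is a fixed linear combination across \emph{several} discriminants, and its N\'eron--Tate height would involve cross-pairings between Heegner points of different discriminants that no known non-vanishing theorem controls. You cannot ``choose a Heegner datum'' here --- the divisor is what it is. The paper states this obstruction explicitly in the introduction (contrasting with the Kerr--Li--Qiu--Yang setting, where the congruence subgroup is engineered so that the relevant points \emph{are} Heegner points), so granting this step grants the entire difficulty.

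What the paper does instead is elementary but quite different: it projects the shadow to $J_0^+(p)$, where Mazur's theorem (Theorem~\ref{M2}) says the rational torsion is trivial, and then reduces modulo $p$. The special fibre of the minimal regular model of $X_0^+(p)$ is a nodal rational curve whose nodes are glued pairs of conjugate supersingular $j$-invariants, and the shadow reduces to an explicit divisor on $\mathbb{P}^1$ supported at small supersingular $j$-invariants; one then certifies non-principality by evaluating an explicit rational function at a supersingular $j$-invariant in $\mathbb{F}_{p^2}\setminus\mathbb{F}_p$ (Propositions~\ref{prop:triv} and~\ref{prop:CMred}). This is supplemented by a class-number/Riemann--Roch argument via $\Sh(w_p)$ and Proposition~\ref{M3} for $p\equiv 11\bmod 12$, a Bezout-type argument playing $\Sh(T_2)$ against $\Sh(T_3)$ for one stubborn congruence class, and finite computer checks. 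If you want to rescue your outline, you would need either to prove the Heegner-point comparison for these specific fixed-point divisors (which would be a substantial new result) or to replace that step with a direct non-torsion certificate of the kind the paper uses.
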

 Our second result deals with the case of arbitrary level $N$ and has significantly more exclusions.

\begin{theorem} \label{THM2}
    There are finitely many $N$ such that $\Cer(N) = 0$.  More precisely, if
    \begin{center}
    $N > 2^5 \times 3^4 \times 5^2 \times 7^2 \times \displaystyle \prod_{\substack{11 \le p \le 71, \\ p \not \in \{43,53, 61,67\} }} p$
    \end{center}
    then $\Cer(N)$ is non-zero in $CH_{1}(J_{0}(N))$.
\end{theorem}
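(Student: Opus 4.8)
\section*{Proof proposal for Theorem \ref{THM2}}

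The plan is to reduce Theorem \ref{THM2} to Theorem \ref{THM1}, the behaviour of Ceresa cycles under covers (Proposition \ref{prop:pushforward}), and a finite list of non-vanishing statements at prime-power levels. Suppose $\Cer(N) = 0$. For every divisor $M \mid N$ there is a nonconstant degeneracy morphism $X_0(N) \to X_0(M)$, so Proposition \ref{prop:pushforward} gives $\Cer(M) = 0$; applying this with $M = p$ and with $M = p^{v_p(N)}$ for each prime $p \mid N$ reduces the problem to understanding for which prime powers $q = p^{k}$ one has $\Cer(q) = 0$. By Theorem \ref{THM1}, $\Cer(p) = 0$ forces $X_0(p)$ to be hyperelliptic or of genus $\le 1$, i.e.\ $p \le 71$ and $p \notin \{43,53,61,67\}$; in particular the (infinitely many) primes $p > 71$ are already excluded, and every prime divisor of $N$ lies in $S := \{2,3,5,7,11,13,17,19,23,29,31,37,41,47,59,71\}$.

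It remains to bound, for each $p \in S$, the largest exponent $k$ with $\Cer(p^{k}) = 0$. For $p \ge 11$ in $S$, Theorem \ref{THM1} already gives $\Cer(p) = 0$, and the claim is that the exponent cannot be increased, i.e.\ $\Cer(p^{2}) \neq 0$. For $p \in \{2,3,5,7\}$ the curve $X_0(p^{k})$ has genus $\le 1$ (so $\Cer(p^{k})=0$ trivially) up to $k = 5, 4, 2, 2$ respectively, once one also checks the remaining low-level cases by hand, and the claim is that $\Cer(2^{6}) \neq 0$, $\Cer(3^{5}) \neq 0$, $\Cer(5^{3}) \neq 0$, $\Cer(7^{3}) \neq 0$. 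Granting these finitely many non-vanishing statements, any $N$ with $\Cer(N) = 0$ satisfies $v_{2}(N) \le 5$, $v_{3}(N) \le 4$, $v_{5}(N) \le 2$, $v_{7}(N) \le 2$, and $v_{p}(N) \le 1$ for the remaining $p \in S$, hence
\begin{center}
$N \;\le\; 2^{5}\times 3^{4}\times 5^{2}\times 7^{2}\times \displaystyle\prod_{\substack{11 \le p \le 71 \\ p \notin \{43,53,61,67\}}} p,$
\end{center}
which is the contrapositive of the theorem. (The displayed bound need not be sharp; it records the threshold beyond which our argument applies.)

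The main work, and the main obstacle, is establishing non-vanishing at the finitely many prime-power levels $M \in \{64,\,243,\,125,\,343\}$ together with $M = p^{2}$ for $p \in \{11,13,17,19,23,29,31,37,41,47,59,71\}$. For each such $M$ one checks, from the genus formula and Ogg's classification of hyperelliptic modular curves, that $X_0(M)$ has genus $\ge 3$ and is not hyperelliptic, and then runs the machinery behind Theorem \ref{THM1}: by Theorem \ref{zhang} and the relation between the Gross--Kudla--Schoen modified diagonal cycle and Chow--Heegner divisors, $\Cer(M)$ can be torsion only if a divisor class on $J_0(M)$ attached to an Atkin--Lehner correspondence on $X_0(M)$ is torsion, which one rules out using the geometry and arithmetic of $J_0(M)$. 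The delicate case is $M = p^{2}$ with $p$ such that $\Cer(p) = 0$: here the obstruction cannot be pushed down to $X_0(p)$ (where it would vanish), so one must work directly with the $p^{2}$-new part of $J_0(p^{2})$ and with the CM points of discriminant $-4p^{2}$ that occur as Fricke fixed points on $X_0(p^{2})$. Showing that the resulting Chow--Heegner point has infinite order — by a Gross--Zagier-type height computation, a bound on the torsion of $J_0(p^{2})$, or an explicit computation, the list of relevant levels being finite — is the heart of the proof.
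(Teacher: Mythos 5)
Your reduction step is exactly the paper's: use Proposition~\ref{prop:pushforward} together with Theorem~\ref{THM1} to force every prime divisor of $N$ into the list $\{2,3,5,7,11,\dots,71\}\setminus\{43,53,61,67\}$, and then bound the exponents by exhibiting non-vanishing at $2^6$, $3^5$, $5^3$, $7^3$ and $p^2$ for the remaining twelve primes. That part is sound (modulo one factual slip: $X_0(81)$ has genus $4$ and is not hyperelliptic, so $\Cer(3^4)$ is not ``trivially zero''; this does not damage the bound, which only needs $\Cer(3^5)\neq 0$).

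The genuine gap is that the sixteen prime-power non-vanishing statements, which you correctly identify as the heart of the matter, are only granted, and the sketch you offer for them would not go through. ``Running the machinery behind Theorem~\ref{THM1}'' is not available at these levels: that machinery rests on facts specific to prime level $p$ --- Mazur's determination of $J_0(p)(\Q)_{\tors}$ and the triviality of $J_0^+(p)(\Q)_{\tors}$, the description of the special fibre of the minimal regular model of $X_0(p)$ at $p$ as two $\mathbb{P}^1$'s crossing at supersingular points, and the resulting reduction map of Propositions~\ref{prop:redmap}--\ref{prop:CMred} --- none of which transfer to $X_0(p^2)$ or $X_0(p^k)$, where the reduction at $p$ is far worse and the rational torsion is not controlled. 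The paper instead splits the finite list into four genuinely different arguments: for $p^2$ with $p\in\{23,47,59,71\}$ the chosen Hecke shadow is supported on Heegner points and cusps, so Gross--Zagier applies; for $11^2$, $13^2$, $5^3$ it computes $\Sh(w_N)$ on an explicit canonical model and shows its reductions have incompatible orders at two good primes; for $3^5,7^3,17^2,19^2,29^2,31^2,37^2,41^2$ it projects the shadow to a positive-rank modular quotient $A_f$ and verifies numerically, via period-lattice integrals with explicit error bounds, that a suitable multiple does not land in $\Lambda_f$; and $\Cer(64)$ is the Fermat quartic, handled by citing Harris and the Ellenberg--Logan--Srinivasan certification. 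Without supplying arguments of this kind (or some substitute) for each of the sixteen levels, your proposal establishes only the reduction, not the theorem.
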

 We note that Theorem \ref{THM2} combined with Theorem \ref{zhang} (reproduced from \cite{zhang2010gross}) proves a significant part of \cite[Conjecture 1.2.2]{qiu2023finiteness}.

As alluded to previously, the key ingredient of our proof is the relation between the Ceresa cycle on $X_{0}(N)$ and certain (scaled) Chow-Heegner divisors associated to Hecke operators. More specifically, we are able to reduce the non-vanishing of the Ceresa cycle to proving that a carefully constructed rational point on the Jacobian is non-torsion. The study of rational points on Jacobian of modular curves has been of great arithmetic interest. The theorems of Manin \cite{Manin} and Drinfeld \cite{Drinfeld} show that the subgroup of the Jacobian generated by linear equivalences of differences of cusps is finite. Mazur \cite{mazur1977modular} showed that for prime level $p \ge 5$ the rational torsion subgroup of $J_{0}(p)$ is generated by the differences of the two cusps of $X_{0}(p)$. In fact, this is conjectured to hold more generally for composite $N$. Moreover, we also have some understanding of non-torsion points of $J_{0}(N)(\Q)$, in large part due Gross and Zagier's classical study of Heegner divisors \cite{gross1986heegner}.

Our methodology is similar to \cite{kerr2024non}, as we reduce non-vanishing to the study of a certain divisor in the modular Jacobian. However, the key difference is that the congruence subgroup considered in the previous work allows the authors to reduce to Heegner divisors and conclude that such points are of infinite order by using the celebrated Gross-Zagier formula to show that the N\'eron-Tate height of the divisor is non-zero. In our case, the associated points do not fall into a pre-existing framework, and we instead make use of the arithmetic and geometry of the modular Jacobian to deduce that the associated point is non-torsion. 

We remark that Chow-Heegner divisor were also studied by Dogra and Le Fourn \cite{dogra2021quadratic} in the context of the quadratic Chabauty method for modular curves. In this work, the authors were able to show that certain linear combinations of Chow-Heegner points are torsion using the Gross-Zagier formula and its generalisation.

One drawback of our method (and of \cite{kerr2024non}) is that it does not apply to examples where $J_{0}(N)(\Q)$ has rank $0$. A famous examples of this is $X_{0}(64)$, which is simply the Fermat quartic, whose Jacobian has $\Q$-rank zero, but for which the Ceresa cycle does not vanish \cite{harris1983homological}. There are other examples excluded by bound of Theorem \ref{THM2} with finite $J_{0}(N)(\Q)$, for instance $N = 34, 38 , 51 , 55 \ldots $. The curve $X_{0}(34)$ has genus $3$ and we were able to verify non-vanishing using \cite{ellenberg2024certifyingnontrivialityceresaclasses}, however we were unable to apply this to higher genus curves due to the automorphism hypothesis. Moreover, our examples fail the cohomological algebraic vanishing criteria proved in \cite{laga2024vanishing}.  

While it is seemingly possible to use the methods of Section 5 to treat all levels $N$ for which $X_{0}(N)$ is non-hyperelliptic and $J_{0}(N)(\Q)$ has positive rank, there are sufficiently many cases to consider that it is impractical and too computationally expensive at this time.

Beyond constructing new points of infinite order on the modular Jacobian, the main arithmetic application of our work is towards triple product $L$-functions. Namely, there are relations between the height of $(f, g, h)$-isotypic Hecke components of the Gross-Kudla-Schoen cycle (an object closely related to the Ceresa cycle e.g. \cite{zhang2010gross}), and the special value $L'(f\times g \times h, 2)$, by analogy with the Gross-Zagier height formula \cite{gross1986heegner} \cite{yuanzhangzhang} \cite{Bertolini_Castella_Darmon_Dasgupta_Prasanna_Rotger_2014}.

This paper is organised as follows. In Section \ref{CHpoints}, we recall some standard constructions associated to Chow groups and define `shadow points' corresponding to endomorphisms of Jacobians. In Section \ref{modularcurves}, we give a brief overview of some well-known results on the geometry and arithmetic of modular curves which allow us to study the shadow points.  In Section \ref{heckops} we give explicit formulae for shadow points corresponding to Hecke operators on $J_{0}(N)$.
In Sections \ref{proofof1} and \ref{proofof2}, we prove Theorems \ref{THM1} and \ref{THM2} respectively. In Section \ref{heckestructure} we discuss the Hecke structure of shadow points and the links between our work and the Gross-Kudla conjecture. 

Some cases in our analysis required the use of computer calculation. The source code for these is available at 
\url{https://github.com/jameswrawson/ModularCeresa}.

\begin{Ack}
The authors thank Vladimir Dokchitser, Jef Laga, Samir Siksek and Jan Vonk for helpful conversations; and Benedict Gross for his comments on the previous version of this work which have inspired Section 7. Both authors are grateful for the financial support provided by the UK Engineering and Physical Sciences Research Council. The first named author is supported by the EPSRC Doctoral Prize fellowship EP/W524335/1. The second named  author is supported by the  EPSRC studentship EP/W523793/1.\end{Ack}

\section{Gross-Kudla-Schoen Cycles and Chow-Heegner Points}
\label{CHpoints}
In what follows $C$ and $D$ are  smooth, projective and geometrically integral curves, defined over an algebraically closed field $K$. We write $J_{C}$ for the Jacobian variety of $C$ and we denote the genus of $C$ by $g$.
\subsection{The Ceresa Cycle and Gross-Kudla-Schoen Classes}
Another cycle that can be written down for an arbitary curve $C$ and degree 1 divisor $e$ on $C$  is the following.
\begin{definition}
    The \textit{Gross-Kudla-Schoen (modified) diagonal cycle}, $\Gamma^3(C, e) \in \CH_1(C^3)$, is defined to be 
    $$\Delta^3_C - \pr_{1, 2}^*(\Delta_C) \cdot \pr^*_3 e - \pr^*_{1, 3} (\Delta_C) \cdot \pr_2^* e - \pr^*_{2, 3}(\Delta_C) \cdot \pr_1^* e + \pr_{1, 2}^* (e \times e) + \pr_{1, 3}^* (e \times e) + \pr_{2, 3}^* (e \times e),$$
    where $\pr_i$ denotes the projection on to the $i$th factor, $\pr_{i, j}$ is the projection onto the product of the $i$th and $j$th factor, $\Delta_C^{3}$ denotes the small diagonal in $C^3$ (i.e. the locus where all 3 points are equal), and $\Delta_C$ the diagonal in $C^2$.
\end{definition}

The following theorem of Zhang \cite{zhang2010gross} relates the vanishing of the Ceresa cycle to the vanishing of the Gross-Kudla-Schoen diagonal cycle.
\begin{theorem}[Zhang]
If $\Gamma^3(C, e) = 0$, then $(2g - 2)e = K_C$, and $\Cer(C, e) = 0$. Conversely, if $\Cer(C, e) = 0$, then $\Gamma^3(C, e) = 0$.
\label{zhang}
\end{theorem}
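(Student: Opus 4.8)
The plan is to prove the two implications by relating both cycles to the same ``Faber--Pandharipande''-style class via the summation map. First I would set up the key correspondence: let $\sigma\colon C^3 \to J_C$ be the map sending $(x,y,z) \mapsto \iota_e(x) + \iota_e(y) + \iota_e(z)$, or rather work with the collection of maps $\sigma_S$ for subsets $S \subseteq \{1,2,3\}$ sending a point to the sum of $\iota_e$ of its coordinates indexed by $S$. The point of the modified diagonal is that it is built precisely so that $\sigma_{\{1,2,3\},*}\Gamma^3(C,e)$ recovers (a multiple of) the Ceresa cycle, while the ``correction terms'' are exactly the pieces needed to kill the lower-dimensional contributions and to ensure the class is homologically trivial. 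So the first half, $\Gamma^3 = 0 \implies \Cer = 0$, should follow by pushing forward along $\sigma$: one checks that $\sigma_*\Gamma^3(C,e)$ equals $\pm\Cer(C,e)$ up to an explicit nonzero integer, using the fact that pushforward along $\sigma_{\{i\}}$ of $\pr_{i,j}^*\Delta_C$, $\pr_i^* e$, etc.\ land in $\CH_0(J_C)$ or lower strata and contribute controlled terms, and that the small diagonal pushes to $[3]_*\iota_{e,*}[C]$ which is cohomologous to a translate of $\iota_e(C)$.

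For the claim $\Gamma^3(C,e) = 0 \implies (2g-2)e = K_C$, I would argue by a homological/cohomological obstruction: the class of $\Gamma^3(C,e)$ in $H_*(C^3)$ (or in the intermediate Jacobian / the relevant piece of $\CH$) is identically zero only when $e$ is the ``canonical'' basepoint, because the failure of homological triviality of the naive modified diagonal is measured by a term proportional to $(2g-2)e - K_C$ living in $\mathrm{Pic}^0$ or in $H^2$. Concretely, one computes the class of $\pr_{i,2}^*(\Delta_C)\cdot\pr_3^* e$ and friends in cohomology via the Künneth decomposition of $[\Delta_C] \in H^2(C\times C)$ (the diagonal class decomposes into $\mathrm{pt}\times C$, $C\times\mathrm{pt}$, and a ``diagonal'' $H^1\otimes H^1$ piece), and sees that the coefficients match up to give homological triviality automatically in the $H^1\otimes H^1$ directions but leave a residual obstruction in the $H^0\otimes H^2$ / $H^2\otimes H^0$ directions equal to $(2g-2)[e]-[K_C]$ in $H^2(C)$; since $[K_C] = (2g-2)[\mathrm{pt}]$ this forces $e$ to have the right degree-normalized class, i.e.\ $(2g-2)e = K_C$ in $\mathrm{Pic}$ (not merely in $H^2$) — and here I would need to upgrade from a cohomological statement to a statement in the Picard group, presumably using that $\Gamma^3 = 0$ in the Chow group is much stronger than homological triviality and lets one detect the $\mathrm{Pic}^0$-part.

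For the converse, $\Cer(C,e) = 0 \implies \Gamma^3(C,e) = 0$, I would run the pushforward argument in reverse together with a ``conservativity'' input: the modified diagonal $\Gamma^3(C,e)$ is characterized by its images under all the $\sigma_S$, or more precisely there is a decomposition of $\CH_1(C^3)$ (for instance via Deninger--Murre or the Chow--Künneth projectors of $C^3$, built from those of $C$) in which $\Gamma^3(C,e)$ is supported in a single ``top'' component, and that component is faithfully detected by $\sigma_{\{1,2,3\},*}$, which sends it to $\Cer(C,e)$. So vanishing of the Ceresa cycle forces vanishing of that component, hence of $\Gamma^3$ itself. The main obstacle, and the part I expect to require genuine care rather than formal manipulation, is precisely this last conservativity/injectivity step: showing that no information about $\Gamma^3(C,e)$ is lost under $\sigma_*$ — i.e.\ that the kernel of $\sigma_*$ restricted to the relevant Chow--Künneth piece is trivial. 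This is where the precise shape of the correction terms in the definition of $\Gamma^3$ matters, and it is the reason the theorem is Zhang's and not elementary; in the write-up I would cite \cite{zhang2010gross} for this implication rather than reprove it, and focus the self-contained argument on the cohomological computation identifying the canonical basepoint.
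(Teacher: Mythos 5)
The first thing to say is that the paper does not prove this statement at all: it is imported verbatim from \cite{zhang2010gross} and used as a black box, so there is no internal argument to compare yours against. Judged on its own terms, your sketch has the right skeleton (the summation maps $\sigma_S$ and the Beauville decomposition of $\CH_1(J_C)\otimes\Q$ are indeed the standard mechanism), but the two identities you actually assert are not correct. Writing $\iota_{e,*}[C]=\sum_{j\ge 0}C_{(j)}$ with the normalization $[n]_*C_{(j)}=n^{\,j+2}C_{(j)}$, the pushforward of the modified diagonal under the full sum map is $[3]_*C-3[2]_*C+3[1]_*C=\sum_j\bigl(3^{\,j+2}-3\cdot 2^{\,j+2}+3\bigr)C_{(j)}=6C_{(1)}+36C_{(2)}+150C_{(3)}+\cdots$, whereas $\Cer(C,e)=\sum_j\bigl(1-(-1)^j\bigr)C_{(j)}=2\sum_{j\ \mathrm{odd}}C_{(j)}$. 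These are \emph{not} equal up to a nonzero integer; the implication $\Gamma^3=0\Rightarrow\Cer=0$ survives only because every coefficient with $j\ge 1$ is individually nonzero, forcing each $C_{(j)}$, $j\ge 1$, to vanish separately. Your cohomological route to $(2g-2)e=K_C$ also cannot work as described: $\Gamma^3(C,e)$ is homologically trivial for \emph{every} degree-one $e$ (Gross--Schoen), and $[e]$ and $[K_C]/(2g-2)$ coincide in $H^2(C)$, so the obstruction you are looking for is invisible in cohomology. It must be extracted at the Chow level, e.g.\ by pushing $\Gamma^3(C,e)\cdot\pr_{1,2}^*\Delta_C$ forward to the third factor, which yields precisely $(2g-2)e-K_C$ in $\Pic(C)_\Q$ --- this is exactly the paper's remark that the shadow of the identity correspondence is $(2g-2)e-K_C$.

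The more serious issue is the converse, which is where the entire content of the theorem lives, and which your sketch both mislocates and then defers. The ``conservativity'' of $\sigma_*$ on the relevant Chow--K\"unneth piece is the comparatively soft step (Deninger--Murre/Beauville Fourier theory handles it). The genuine difficulty is that $\Cer(C,e)=0$ only kills the odd-indexed components $C_{(j)}$, while $\Gamma^3(C,e)=0$ additionally requires the even-indexed components with $j\ge 2$ to vanish \emph{and} forces $(2g-2)e=K_C$ in $\Pic(C)_\Q$; neither follows from the pushforward formalism you set up, and deducing them from the vanishing of the odd part is precisely the theorem being attributed to Zhang. Since you explicitly propose to cite \cite{zhang2010gross} for that step, your write-up would in the end establish no more than the paper does by quoting the result outright, while the one identity you do compute is wrong. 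Either reprove the converse genuinely (via the Fourier transform and the relations among the tautological classes $C_{(j)}$) or cite the theorem wholesale; the intermediate position is not a proof.
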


In light of this result, we restrict to the case $e = \frac{K_C}{2g - 2}$ and write $\Cer(C)$ for $\Cer(C, \frac{K_C}{2g - 2})$, for any choice of representative of the canonical class. 

\subsection{Chow-Heegner and Shadow Points}
To any morphism of Jacobians, $\phi : J_C \to J_D$, there is a correspondence of curves, $C \leftarrow X \rightarrow D$, which realises the isogeny via push-forward and pullback. When $\phi$ is an endomorphism, this correspondence can be naturally embedded in $C^2$, and we may refer to \textit{fixed points} of $\phi$ as the intersection of the correspondence with the diagonal.

\begin{definition}
Let $C$ be a curve of genus $g$, $J_{C}$ its Jacobian and $\phi$ an endomorphism of $J_{C}$. The \textit{shadow} of $\phi$ is the degree $0$ divisor on $C$ 
\begin{center}
    $\Sh(\phi) = (2g - 2)F_{\phi} - \deg(F_{\phi}) K_C - \phi(K_C) - \phi^\vee(K_C) + (\deg(\phi) + \deg(\phi^\vee))K_C$
\end{center}
where $F_{\phi}$ is the fixed point divisor of $\phi$, $K_{C}$ is a canonical divisor on $C$ and $\phi^{\vee}$ denotes the correspondence traversed in the opposite direction.  
\end{definition}
We remark that this divisor is a scaling of the corresponding Chow-Heegner point, see \cite{darmon2012iterated}, \cite{darmon2015algorithms} for details on the construction of such points. The terminology of ``shadows'' is adopted from \cite{ellenberg2024certifyingnontrivialityceresaclasses}, both due to the similarity of the construction and to avoid any notational ambiguity with the Chow group. If $\phi$ is defined over $\mathbb{Q}$, then so is $\Sh(\phi)$. The following proposition is similar to those found in \cite{ellenberg2024certifyingnontrivialityceresaclasses} and \cite{kerr2024non}.
\begin{proposition}
Suppose $J_C$ has a non-trivial endomorphism $\phi$. If the Ceresa cycle is zero in $\CH_1(J)$, then $[\Sh(\phi)] \in J_{C}$ is torsion.
\end{proposition}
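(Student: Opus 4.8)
The plan is to realise $\Sh(\phi)$ as the image of the Gross--Kudla--Schoen cycle $\Gamma^3\bigl(C,\tfrac{K_C}{2g-2}\bigr)$ under a correspondence-induced homomorphism $\CH_1(C^3)\to\CH_0(C)=\Pic(C)$; the proposition then follows in one line from Theorem~\ref{zhang}, which gives $\Gamma^3\bigl(C,\tfrac{K_C}{2g-2}\bigr)=0$ as soon as $\Cer(C)=0$. This is a variant of the construction of Chow--Heegner points from diagonal cycles in \cite{darmon2012iterated}, \cite{darmon2015algorithms}, adapted to the specific normalisation of $\Sh$ used here.

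Concretely, let $Z\subset C\times C$ be a correspondence realising $\phi$, normalised so that on $\Pic(C)$ one has $\phi(D)=\pr_{2,*}\bigl(\pr_1^*D\cdot[Z]\bigr)$ and $\phi^\vee(D)=\pr_{1,*}\bigl(\pr_2^*D\cdot[Z]\bigr)$ (the opposite choice swaps $\phi$ and $\phi^\vee$, under which $\Sh$ is visibly invariant), and chosen so that $Z$ meets the diagonal $\Delta_C$ properly, so that $F_\phi=\delta^*[Z]$ is an honest divisor on $C$, with $\delta\colon C\hookrightarrow C^2$ the diagonal. On $C^3$, with the projections appearing in the definition of $\Gamma^3$, I would set
\[
\Psi_\phi\colon\CH_1(C^3)\longrightarrow\CH_0(C),\qquad \Psi_\phi(\alpha)=(2g-2)\,\pr_{3,*}\!\bigl(\alpha\cdot\pr_{1,2}^*[Z]\bigr).
\]
This is a well-defined group homomorphism, since $C^3$ is smooth (so the intersection product is defined), $\pr_{1,2}$ is flat, and $\pr_3$ is proper; after $\otimes\Q$ it is $\Q$-linear, which suffices since we work rationally throughout.

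The heart of the proof is the identity
\[
\Psi_\phi\Bigl(\Gamma^3\bigl(C,\tfrac{K_C}{2g-2}\bigr)\Bigr)=[\Sh(\phi)],
\]
which I would establish by evaluating $\Psi_\phi$ on each of the seven terms of $\Gamma^3$, using the projection formula and the fact that $\pr_{1,2}^*$ is a ring homomorphism. One computes that: $\Delta_C^3$ contributes $(2g-2)F_\phi$; the term $-\pr_{1,2}^*(\Delta_C)\cdot\pr_3^*e$ contributes $-\deg(F_\phi)K_C$; the terms $-\pr_{1,3}^*(\Delta_C)\cdot\pr_2^*e$ and $-\pr_{2,3}^*(\Delta_C)\cdot\pr_1^*e$ contribute $-\phi^\vee(K_C)$ and $-\phi(K_C)$; the term $\pr_{1,2}^*(e\times e)$ contributes $0$, because $\pr_{1,2}^*$ is multiplicative and $[e\times e]\cdot[Z]\in\CH^3(C^2)=0$; and $\pr_{1,3}^*(e\times e)$ and $\pr_{2,3}^*(e\times e)$ contribute $\deg(\phi)K_C$ and $\deg(\phi^\vee)K_C$. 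Adding these up gives precisely the divisor $\Sh(\phi)$, which is then of degree zero, as it must be since $\Gamma^3$ is homologically trivial. Granting the identity, if $\Cer(C)=0$ then $\Gamma^3\bigl(C,\tfrac{K_C}{2g-2}\bigr)=0$ by Theorem~\ref{zhang}, so $[\Sh(\phi)]=\Psi_\phi(0)=0$ in $\Pic(C)\otimes\Q$; hence $[\Sh(\phi)]$ is torsion in $J_C$.

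I expect the seven-term bookkeeping in the identity above to be the main obstacle. Each contribution is individually routine (flat pullback, projection formula, proper intersection), but one must keep the normalising factor $2g-2$ straight and, above all, pin down the two $\pr_{i,3}^*(e\times e)$ terms correctly: these produce the correction $(\deg\phi+\deg\phi^\vee)K_C$ appearing in the definition of the shadow, and without them both the identity and the degree-zero check fail. A secondary, milder point is the requirement that $Z$ meet $\Delta_C$ properly; for an arbitrary non-trivial $\phi$ (e.g.\ multiplication by an integer) one must either impose this or interpret $F_\phi$---hence $\Sh(\phi)$---via a refined Gysin intersection, but for the Hecke correspondences we ultimately apply the proposition to, properness holds automatically.
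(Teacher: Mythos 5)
Your proposal is correct and follows essentially the same route as the paper: both push the class of the correspondence forward through $\Gamma^3\bigl(C,\tfrac{K_C}{2g-2}\bigr)$ via $\pr_{3,*}\bigl(\,\cdot\,\cdot\pr_{1,2}^*[Z]\bigr)$, evaluate the seven terms one by one to recover $\Sh(\phi)$, and conclude from Theorem~\ref{zhang}. Your term-by-term contributions agree with the paper's Table~\ref{tab:corres}, and your added remarks on proper intersection with the diagonal are a harmless refinement.
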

\begin{proof}
As mentioned before this proposition, associated to $\phi$ there is a natural class $X_{\phi}$ in $C^2$, given by the image of the correspondence realising $\phi$. We can push this class forward through $\Gamma^3(C, e)$, in the following $\pr_{3, *}\left(\Gamma^3(C, e) \cdot \pr_{1, 2}^* X_{\phi}\right)$, where $e = \frac{K_{C}}{2g -2}$. This can be calculated by computing this pushforward for each term in the definition of $\Gamma^3(C, e)$. We summarise these calculations in Table~\ref{tab:corres} using the notation of the previous definition.

\begin{table}
    \begin{tabular}{|c|c|c|}
        \hline
        $z \in CH_1(C^3)$ & $z \cdot \pr_{1, 2}^* X_{\phi}$ & $\pr_{3, *}(z \cdot \pr_{1, 2}^* X_{\phi})$ \\ \hline \hline
        $\Delta^3_C$ & $\Delta^3_C(F_{\phi})$ & $F_{\phi}$ \\ \hline
        $-\Delta_C \times e$ & $-\Delta^2_C(F_{\phi}) \times e$ & $-\deg(F_{\phi}) e$ \\ \hline
        $-\pr_{1, 3}^* \Delta_C \cdot \pr_{2}^* e$ & $-\pr_{1, 3}^* \Delta_C(\phi^{\vee}(e)) \cdot \pr_{2}^* e$ & $-\phi^{\vee}(e)$ \\ \hline
        $-e \times \Delta_C$ & $-e \times \Delta_C(\phi(e))$ & $-\phi(e)$ \\ \hline
        $C \times e \times e$ & $\phi^{\vee}(e) \times e$ & $\deg(\phi^{\vee}) e$ \\ \hline
        $e \times C \times e$ & $e \times \phi(e) \times e$ & $\deg(\phi) e$ \\ \hline
        $e \times e \times C$ & $0$ & $0$ \\ \hline
    \end{tabular}
    \caption{Components of the calculation of $\Gamma^3(C, e)_* X_{\phi}$}
    \label{tab:corres}
\end{table}

Combining these terms, we get the following point
\begin{center}
$F_{\phi} - \deg(F_{\phi}) e - \phi(e) - \phi^\vee(e) + \deg(\phi) e + \deg(\phi^\vee) e$
\end{center}
which is commonly the definition of a Chow-Heegner point. Multiplying by $2g - 2$ to clear denominators gives $\Sh(\phi) = (2g - 2)F - \deg(F) K_C - \phi(K_C) - \phi^\vee(K_C) + (\deg(\phi) + \deg(\phi^\vee))K_C$.
If the Ceresa cycle is zero in the rational Chow group, then so is $\Gamma^3(C, e)$, and so pushing forward $X_{\phi}$ through $\Gamma^3(C, e)$ must also be zero. This shows the resulting divisor is trivial in the Jacobian tensored with $\mathbb{Q}$, and so it is torsion.
\end{proof}

We remark that the above proposition does apply to $\phi$ being the identity map. In this case, $F_{\phi}$ is $-K_C$, and the corresponding shadow is $(2g - 2)e - K_C$, which is trivial by assumption.

\subsection{Action of the Endomorphism Algebra} \label{endact}
By assumption, $C$ admits a non-trivial correspondence. It is natural to ask how the action of these on the diagonal cycle affect the shadows. We therefore expand the definition of shadow points as follows.
\begin{definition}
    Let $Z \in CH_1(C^3) \otimes \mathbb{C}$ and let $\phi$ be a correspondence from $C$ to $C$. Define the shadow $ \Sh(Z, \phi)$ of $Z$ under $\phi$  to 
    be 
    \begin{center}
  $(2g - 2) Z_* X_{\phi} \in J_C \otimes \mathbb{C}$
    \end{center}
  where as before, $X_{\phi}$ is the graph of $\phi$ in $C \times C$. 
\end{definition}

\begin{proposition} \label{breakups}
    Fix a correspondence $\phi$ from $C$ to $C$, and take another (not necessarily distinct) correspondence $\psi$, also from $C$ to $C$. Then the shadow points satisfy the following.
    \begin{itemize}
        \item $\Sh((\psi, \mathrm{id}, \mathrm{id})_* Z, \phi) = \Sh(Z, \phi \circ \psi)$
        \item $\Sh((\mathrm{id}, \psi, \mathrm{id})_* Z, \phi) = \Sh(Z, \psi^\vee \circ \phi)$
        \item $\Sh((\mathrm{id}, \mathrm{id}, \psi)_* Z, \phi) = \psi^\vee_* \Sh(Z, \phi)$
    \end{itemize}
\end{proposition}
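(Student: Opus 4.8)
The plan is to carry everything out in the calculus of correspondences, so that all three identities reduce to associativity of composition. First I would unwind the definitions: writing $\pr_i$ and $\pr_{ij}$ for the projections off $C^3$, we have $\Sh(Z,\phi) = (2g-2)\,\pr_{3,*}\!\big(Z\cdot\pr_{1,2}^*X_\phi\big)$, which is $(2g-2)$ times the result of regarding $Z$ as a correspondence from $C^2$ (the first two factors) to $C$ (the third) and applying it to the class $X_\phi\in\CH_1(C^2)$; I abbreviate this as $\Sh(Z,\phi)=(2g-2)\,Z_*X_\phi$. Likewise $(\psi,\mathrm{id},\mathrm{id})_*$, $(\mathrm{id},\psi,\mathrm{id})_*$ and $(\mathrm{id},\mathrm{id},\psi)_*$ are the actions on $\CH_1(C^3)\otimes\mathbb{C}$ of the product correspondences $X_\psi\boxtimes\Delta_C\boxtimes\Delta_C$, $\Delta_C\boxtimes X_\psi\boxtimes\Delta_C$ and $\Delta_C\boxtimes\Delta_C\boxtimes X_\psi$. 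The one elementary input I need is the dictionary between these actions and composition: acting by a correspondence on a factor that is a \emph{source} slot of the object in question agrees, up to the transpose built into such a (contravariant) slot, with pre-composition by the opposite correspondence, whereas acting on a \emph{target} slot agrees with post-composition.

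With this in hand the three statements are formal. For the first, the modified factor is a source slot of $Z$ viewed as $C^2\to C$, so $(\psi,\mathrm{id},\mathrm{id})_*Z = Z\circ(X_\psi^\vee\boxtimes\Delta_C)$ and hence, by functoriality of $Z_*$, $\Sh((\psi,\mathrm{id},\mathrm{id})_*Z,\phi) = (2g-2)\,Z_*\big((X_\psi^\vee\boxtimes\Delta_C)_*X_\phi\big)$; but $(X_\psi^\vee\boxtimes\Delta_C)_*X_\phi$ applies $\psi^\vee$ to the source factor of $X_\phi$, i.e.\ pre-composes $\phi$ with $(\psi^\vee)^\vee=\psi$, giving $X_{\phi\circ\psi}$, so the left-hand side is $\Sh(Z,\phi\circ\psi)$. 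For the second, the modified factor is now the \emph{target} slot of $X_\phi$, so no transpose intervenes and the same manipulation produces $X_{\psi^\vee\circ\phi}$, hence $\Sh(Z,\psi^\vee\circ\phi)$. For the third, the correspondence acts on the factor that does not meet $\pr_{1,2}^*X_\phi$; by the projection formula it slides out of that intersection product, and combining it with $\pr_{3,*}$ realises it as the endomorphism $\psi^\vee_*$ of $J_C=\Pic^0(C)$ applied to $\Sh(Z,\phi)$.

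The only real work is in justifying the dictionary of the first paragraph and, above all, in keeping the transposes $(-)^\vee$ on the correct side. Concretely one invokes the moving lemma so that the intersection products above are defined on cycles, uses the projection formula to push a correspondence acting on one factor through an intersection with a class pulled back from the complementary factors, and keeps track of which factor of $Z$ and of $X_\phi$ is playing the source and which the target role — it is exactly this last point that decides whether a $(-)^\vee$ appears and on which side a composition is taken, and it is the only place where one can easily slip. I would organise it by fixing once and for all the embedding of $\mathrm{End}(J_C)$ into $\CH_1(C\times C)$ used to define $X_\phi$, and then checking the three cases against the associativity square for composition of correspondences; every ingredient already appears in the term-by-term computation recorded in Table~\ref{tab:corres}.
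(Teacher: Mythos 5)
Your proposal is correct and follows essentially the same route as the paper: the paper's proof is precisely the explicit unwinding of your ``dictionary,'' writing $\psi=\pi_{2,*}\pi_1^*$, applying the projection formula twice to slide the correspondence through the intersection with $\pr_{1,2}^*X_\phi$ as $\psi^\vee$, and using the Cartesian-square compatibility with $\pr_{1,2}$ (resp.\ $\pr_3$) to identify the result with $Z_*X_{\phi\circ\psi}$, $Z_*X_{\psi^\vee\circ\phi}$, and $\psi^\vee_*\Sh(Z,\phi)$. The bookkeeping of transposes you flag as the delicate point is handled in the paper exactly as you propose.
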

\begin{proof}
For the first equality we note that  
    $$((\psi, \mathrm{id}, \mathrm{id})_* Z)_* X_{\phi} = \pr_{3, *} \left( (\psi, \mathrm{id}, \mathrm{id})_* Z \cdot \pr_{1, 2}^* X_{\phi}\right).$$
    Writing $\psi$ as the composition $\pi_{2, *} \pi_1^*$, where $\pi_1 : X \to C$ and $\pi_2 : X \to C$, we get the following
    $$((\psi, \mathrm{id}, \mathrm{id})_* Z)_* X_{\phi} = \pr_{3, *} \left( (\pi_2, \mathrm{id}, \mathrm{id})_* (\pi_1, \mathrm{id}, \mathrm{id})^* Z \cdot \pr_{1, 2}^* X_{\phi}\right).$$
    Applying the projection formula to $(\pi_2, \mathrm{id}, \mathrm{id})_*$
    $$((\psi, \mathrm{id}, \mathrm{id})_* Z)_* X_{\phi} = \pr_{3, *} (\pi_2, \mathrm{id}, \mathrm{id})_* \left((\pi_1, \mathrm{id}, \mathrm{id})^* Z \cdot (\pi_2, \mathrm{id}, \mathrm{id})^* \pr_{1, 2}^* X_{\phi}\right)$$
    $$= \pr_{3, *} \left((\pi_1, \mathrm{id}, \mathrm{id})^* Z  \cdot (\pi_2, \mathrm{id}, \mathrm{id})^* \pr_{1, 2}^* X_{\phi}\right).$$
    By the same process in reverse
    $$\pr_{3, *} \left((\pi_1, \mathrm{id}, \mathrm{id})^* Z \cdot (\pi_2, \mathrm{id}, \mathrm{id})^* \pr_{1, 2}^* X_{\phi}\right) = $$
    $$\pr_{3, *} (\pi_1, \mathrm{id}, \mathrm{id})_* \left((\pi_1, \mathrm{id}, \mathrm{id})^* Z \cdot (\pi_2, \mathrm{id}, \mathrm{id})^* \pr_{1, 2}^* X_\phi\right) = \pr_{3, *} \left(Z \cdot (\psi^\vee, \mathrm{id}, \mathrm{id})_* \pr_{1, 2}^* X_{\phi}\right).$$
    As the action of $(\psi^\vee, \mathrm{id}, \mathrm{id})$ and $\pr_{1, 2}$ form a Cartesian square, this can be simplified to $Z_* \left((\psi^\vee, \mathrm{id})_* X_{\phi}\right)$, and this is last class is then $X_{\phi \circ \psi}$. The argument for the second is nearly identical. 

    For the third equality 
    $$((\mathrm{id}, \mathrm{id}, \psi)_* Z)_* X_{\phi}) = \pr_{3, *}\left((\mathrm{id}, \mathrm{id}, \pi_2)_* (\mathrm{id}, \mathrm{id}, \pi_2)^* Z \cdot \pr_{1, 2}^* X_{\phi}\right).$$
    Using the projection formula, and noting $\pr_{1, 2} \circ (\mathrm{id}, \mathrm{id}, \pi_2) = \pr_{1, 2}$ gives
    $$\pr_{3, *}\left((\mathrm{id}, \mathrm{id}, \pi_2)_* (\mathrm{id}, \mathrm{id}, \pi_2)^* Z  \cdot \pr_{1, 2}^* X_{\phi}\right) = \pr_{3, *} (\mathrm{id}, \mathrm{id}, \pi_2)_* \left((\mathrm{id}, \mathrm{id}, \pi_1)^* Z \cdot \pr_{1, 2}^* X_{\phi}\right).$$
    Similarly, this is equal to 
    $$\pr_{3, *} (\mathrm{id}, \mathrm{id}, \pi_2)_* (\mathrm{id}, \mathrm{id}, \pi_2)^* (Z  \cdot \pr_{1, 2}^* X_{\phi}) = \pr_{3, *} (\mathrm{id}, \mathrm{id}, \psi^\vee)_* (Z \cdot \pr_{1, 2}^* X_{\phi}).$$
    As $(\mathrm{id}, \mathrm{id}, \psi^\vee)$ and $\pr_3$ give rise to a Cartesian square, we get this is $\psi^\vee_* \pr_{3, *} (Z \cdot \pr_{1, 2}^* X_{\phi})$.
\end{proof}

These formulae are especially interesting when $\psi$ belongs to a commutative subalgebra of the full endomorphism ring, and $Z$ is taken as a eigen-component of $\Gamma^3(C, e)$ for this sub-algebra, as this will give equalities between certain shadows (see Section~\ref{heckestructure}). 

\section{The Arithmetic and Geometry of Modular Curves}
\label{modularcurves}
\subsection{Torsion Subgroups of Modular Jacobians}
Throughout this section $p \ge 5$ denotes a prime number. The proof of Theorem \ref{THM1} focuses on proving that the shadow $D$ of some Hecke operator on $J_{0}(p)$ is non-torsion. The rational torsion subgroup of $J_{0}( p )$ is completely understood due to the classical works of Mazur and Ogg. In this section we give an overview of some well known results. Note that $X_{0}(p)$ has two cusps, both rational, which we denote by $c_{0}$ and $c_{\infty}$.

We first recall the following result due to Ogg \cite{ogg1972rational}.
\begin{theorem}[Ogg]
 The linear equivalence class of $c_{0} - c_{\infty}$ in $J_{0} ( p)$ has order $\frac{p-1}{\text{gcd} \left( p-1, 12 \right)} $.   
\end{theorem}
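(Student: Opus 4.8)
The plan is to pin down the order of $(c_0)-(c_\infty)$ by a two–sided estimate: an explicit modular unit to bound the order above by $n:=\tfrac{p-1}{\gcd(p-1,12)}$, and a period computation — ultimately the substance of Ogg's argument — to see that nothing smaller is principal.

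For the upper bound I would write down a Dedekind $\eta$–quotient. Put $d=\gcd(p-1,12)$ and $g(z)=\bigl(\eta(z)/\eta(pz)\bigr)^{24/d}$. Since $24/d$ is an even integer and $\tfrac{24}{d}(p-1)=24n$ is divisible by $24$, Ligozat's criterion shows that the multiplier system is trivial and $g$ descends to a modular function on $X_0(p)$. From the product expansion $\eta(z)/\eta(pz)=q^{(1-p)/24}\prod_{m\ge1}(1-q^m)(1-q^{pm})^{-1}$ one reads off $\operatorname{ord}_{c_\infty}(g)=-n$; applying the Fricke involution $w_p\colon z\mapsto -1/pz$, which is an automorphism of $X_0(p)$ interchanging the two cusps, together with $\eta(-1/\tau)=\sqrt{-i\tau}\,\eta(\tau)$, gives $g\circ w_p=p^{12/d}g^{-1}$, hence $\operatorname{ord}_{c_0}(g)=-\operatorname{ord}_{c_\infty}(g)=n$. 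As $\eta$ has no zero on the upper half–plane, $\operatorname{div}(g)=n\bigl((c_0)-(c_\infty)\bigr)$, so the order divides $n$.

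For the lower bound, suppose $m\bigl((c_0)-(c_\infty)\bigr)=\operatorname{div}(f)$ for some modular function $f$ and $m\ge1$; the task is to show $n\mid m$. The differential $\omega_f=d\log f$ is of the third kind, with residues $\pm m$ at the cusps, holomorphic on $Y_0(p)$, and all of its periods over $H_1(X_0(p),\mathbb{Z})$ lie in $2\pi i\mathbb{Z}$. The space of third–kind differentials with at worst simple poles along $\{c_0,c_\infty\}$ has dimension $g+1$ and surjects onto $\mathbb{C}$ via the residue at $c_\infty$, with kernel $H^0(X_0(p),\Omega^1)$; its ``Eisenstein line'' is spanned by $\omega_E:=d\log g=\tfrac1d\bigl(E_2(z)-pE_2(pz)\bigr)\tfrac{dq}{q}$, where $E_2$ is the quasimodular weight–$2$ Eisenstein series. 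Matching residues forces $\omega_f=\tfrac{m}{n}\omega_E+\omega_0$ with $\omega_0$ holomorphic, so period integrality of $\omega_f$ becomes the statement that $\tfrac{m}{n}$ times the period lattice of $\omega_E$ lands in $2\pi i\mathbb{Z}$ modulo periods of a holomorphic form. The crux is therefore to compute the period lattice of the Eisenstein differential $\omega_E$ exactly — equivalently, to show that $g$ is not a proper power in the function field, equivalently that the winding element $\{0,\infty\}$, which represents the Abel–Jacobi image of $(c_0)-(c_\infty)$, has exact denominator $n$. This is precisely where the integer $12$ enters, through the special value $\zeta(-1)=-\tfrac1{12}$ (i.e. $B_2=\tfrac16$) governing the constant terms of the weight–$2$ Eisenstein series at the cusps; concretely one either identifies the order with the index of the Eisenstein ideal in the Hecke algebra (Mazur) or argues directly by point counting on $X_0(p)\bmod\ell$ for an auxiliary prime $\ell$ (Ogg). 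I expect this lower bound to be the main obstacle: the upper bound is a short explicit computation with a modular unit, whereas the lower bound requires genuine arithmetic input about periods and Hecke structure, which is exactly the content of the cited theorem.
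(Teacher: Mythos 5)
The paper does not prove this statement: it is quoted verbatim from Ogg's 1972 paper, so there is no internal proof to compare against. Judged on its own terms, your upper bound is correct and complete, and it is in fact the standard first half of Ogg's argument: $24/d$ is an even integer for every prime $p\ge 5$, Ligozat's criterion applies, the $q$-expansion gives $\operatorname{ord}_{c_\infty}(g)=-n$, and the Fricke computation $g\circ w_p=p^{12/d}g^{-1}$ gives $\operatorname{ord}_{c_0}(g)=n$, so the order of $[c_0-c_\infty]$ divides $n=\frac{p-1}{\gcd(p-1,12)}$.

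The lower bound, however, is a genuine gap. Your reduction to the statement that ``the winding element $\{0,\infty\}$ has exact denominator $n$'' is not a proof step but a restatement of the theorem: the order of $[c_0-c_\infty]$ in $J_0(p)$ \emph{is} by definition the denominator of its Abel--Jacobi image with respect to the period lattice, so recasting the problem in terms of the periods of $\omega_E$ modulo $2\pi i\mathbb{Z}$ and the holomorphic periods leaves the entire arithmetic content untouched; you then defer it to ``Mazur's Eisenstein ideal or Ogg's point counting'' without carrying either out. A cleaner and genuinely finishable route, which is essentially what Ogg does, is the following: if $m_0$ denotes the true order, then $m_0\mid n$ and there is an $f$ with $\operatorname{div}(f)=m_0\bigl((c_0)-(c_\infty)\bigr)$; comparing divisors gives $f^{n/m_0}=c\,g$ for a constant $c$, and since $\eta$ is nonvanishing on the simply connected $\mathbb{H}$, $f$ must equal a constant times the fractional power $\bigl(\eta(z)/\eta(pz)\bigr)^{24m_0/(p-1)}$. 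One is then reduced to a finite computation with the Dedekind eta multiplier system (Dedekind sums) on $\Gamma_0(p)$ showing that the minimal positive exponent $t$ for which $\bigl(\eta(z)/\eta(pz)\bigr)^{t}$ is $\Gamma_0(p)$-invariant is $t=24/\gcd(p-1,12)$, whence $m_0=t(p-1)/24=n$. That multiplier computation is the actual content of the lower bound, and it is absent from your proposal.
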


In the same work, Ogg conjectured that the linear equivalence of this divisor generates the entire rational torsion subgroup of $J_{0}( p )$. This was proved by Mazur using the theory of Eisenstein ideals \cite[Page 142]{mazur1977modular}.

\begin{theorem}[Mazur, Theorem III.1.2] \label{M1}
  $J_{0}( p) \left( \mathbb{Q} \right)_{\text{tors}} = \langle [ c_{0} - c_{\infty} ] \rangle.$  
\end{theorem}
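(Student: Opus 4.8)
The plan is to establish the nontrivial inclusion $J_0(p)(\mathbb{Q})_{\tors} \subseteq \langle [c_0 - c_\infty]\rangle$; the reverse inclusion is immediate from Ogg's theorem above, which moreover shows the right-hand side is cyclic of order $n := \tfrac{p-1}{\gcd(p-1,12)}$. Following Mazur, I would write $C := J_0(p)(\mathbb{Q})_{\tors}$, let $\mathbb{T} \subseteq \operatorname{End}(J_0(p))$ be the Hecke algebra generated by the $T_\ell$ for $\ell \neq p$ and by $U_p$, and let $I \subseteq \mathbb{T}$ be the \emph{Eisenstein ideal}, generated by $U_p - 1$ and by $T_\ell - \ell - 1$ for all primes $\ell \neq p$. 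The argument then has two parts: (i) show that $I$ annihilates $C$, so $C \subseteq J_0(p)[I]$; (ii) show that $J_0(p)[I] = \langle [c_0 - c_\infty]\rangle$. Since $[c_0 - c_\infty]$ is visibly killed by $I$ and has order exactly $n$, part (ii) is equivalent to showing $J_0(p)[I]$ is cyclic of order $n$.

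For (i) I would reduce at an auxiliary prime $\ell \neq p$, where $J_0(p)$ has good reduction and reduction is injective on prime-to-$\ell$ torsion. A rational torsion point reduces to a point $\bar x$ fixed by $\operatorname{Frob}_\ell$, so the Eichler--Shimura congruence $\operatorname{Frob}_\ell^2 - T_\ell\operatorname{Frob}_\ell + \ell = 0$ on $J_0(p)_{/\mathbb{F}_\ell}$ yields $(T_\ell - \ell - 1)\bar x = 0$; letting $\ell$ vary kills all the operators $T_\ell - \ell - 1$. The remaining generator $U_p - 1$ is handled by a parallel analysis at $p$, using the Deligne--Rapoport model of $X_0(p)$ over $\mathbb{Z}_p$ and the Atkin--Lehner involution $w_p$ (which interchanges the two cusps and equals $-U_p$ at this prime level) to see $U_p$ acts as $1$ on $C$. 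This gives $I\cdot C = 0$.

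For (ii), I would first check that $I$ is proper and identify $\mathbb{T}/I$: a computation with the weight-$2$ Eisenstein series of level $p$ and the resulting congruences for the cuspidal Hecke algebra gives $\mathbb{T}/I \cong \mathbb{Z}/n\mathbb{Z}$, consistent with Ogg's order. It then suffices to bound $J_0(p)[I]$ one prime at a time: for each $q \mid n$ choose an Eisenstein maximal ideal $\mathfrak{m} \supseteq I$ of residue characteristic $q$ and prove the \emph{multiplicity-one} statement that $J_0(p)[\mathfrak{m}]$ is one-dimensional over $\mathbb{T}/\mathfrak{m} = \mathbb{F}_q$, with $\operatorname{Gal}(\overline{\mathbb{Q}}/\mathbb{Q})$ acting through a \emph{non-split} extension of $\mathbf{1}$ by the mod-$q$ cyclotomic character. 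Granting this, a short induction on the length of the $\mathfrak{m}$-primary part of $C$ (any larger $\mathfrak{m}$-torsion would produce a second line in $J_0(p)[\mathfrak{m}]$) forces $C[q^\infty]$ to equal the $q$-part of the cuspidal group, and assembling over $q \mid n$ gives $J_0(p)[I] = \langle [c_0 - c_\infty]\rangle$.

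The hard part is exactly the one-dimensionality of $J_0(p)[\mathfrak{m}]$ for Eisenstein $\mathfrak{m}$: this is the technical core of Mazur's work, where the geometry genuinely enters, relying on the Deligne--Rapoport description of $X_0(p)/\mathbb{Z}_p$ (two copies of $X(1)$ crossing transversally at the supersingular points), the induced structure of the Néron model of $J_0(p)$ and its component group at $p$, the Shimura subgroup, and a delicate local study of the $\mathfrak{m}$-torsion Galois representation at $p$ and at $q$ --- with the cases $q \in \{2,3\}$, i.e.\ $q \mid \gcd(p-1,12)$, requiring separate and more careful treatment. I would not reprove this from scratch; in the body of the paper the statement is used as a black box, citing \cite{mazur1977modular}.
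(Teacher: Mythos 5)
This statement is not proved in the paper: it is Mazur's Theorem III.1.2, quoted with attribution and used as a black box, so there is no internal argument to compare against. Your proposal is instead a sketch of Mazur's own proof, and its overall architecture is recognizably correct: reduce to showing the Eisenstein ideal $I$ annihilates the rational torsion (Eichler--Shimura at good primes, a separate analysis of $U_p$ via the Deligne--Rapoport model and $w_p$), then control the $I$-torsion using multiplicity one at Eisenstein maximal ideals, with the $2$- and $3$-primary parts needing extra care. You also correctly locate the technical core in the multiplicity-one statement and, like the paper, defer it to \cite{mazur1977modular}.

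However, step (ii) is false as you state it, in two linked ways. First, $J_0(p)[I]$ is \emph{not} cyclic of order $n$: away from $2$ it has order $n^2$, being the direct sum of the cuspidal group and the Shimura subgroup $\Sigma = \ker(J_0(p) \to J_1(p))$, which is also cyclic of order $n$ and killed by $I$. The statement you actually need is that the \emph{rational} points of $J_0(p)[I]$ lie in the cuspidal group, which uses that $\Sigma$ is of $\mu$-type and hence contributes (essentially) no rational points. Second, and symptomatically, your multiplicity-one statement is internally inconsistent: a non-split extension of $\mathbf{1}$ by the mod-$q$ cyclotomic character is two-dimensional, whereas you assert $J_0(p)[\mathfrak{m}]$ is one-dimensional over $\mathbb{T}/\mathfrak{m}$. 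Mazur's multiplicity one says $\dim_{\mathbb{T}/\mathfrak{m}} J_0(p)[\mathfrak{m}] = 2$ for Eisenstein $\mathfrak{m}$, with the two lines furnished (for odd $q$) by the $q$-torsion of the cuspidal group (constant) and of the Shimura subgroup ($\mu$-type); the induction bounding the $\mathfrak{m}$-primary part of $C$ must be run against this two-dimensional picture. A smaller issue in step (i): reduction modulo $\ell$ is injective only on prime-to-$\ell$ torsion, so the Eichler--Shimura argument at $\ell$ only shows $T_\ell - \ell - 1$ kills the prime-to-$\ell$ part of $C$; handling the $\ell$-part requires an additional argument. Since both you and the paper ultimately cite Mazur for the hard content, these are mis-statements of the intermediate structure rather than a misidentification of the method, but as written your step (ii) would not go through.
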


To prove that $D \in J_{0}(p)\left( \mathbb{Q} \right)$ is non-torsion, we further project to the quotient of $J_{0} \left( p\right)$ by the Atkin-Lehner involution $w_{p}$, that is the involution on $J_{0}(p)$ induced by the Atkin-Lehner involution on the curve.  We write $X_{0}^{+}( p)$ for the quotient curve and $J_{0}^{+}(p)$ for the quotient of the Jacobian. In the same reference, Mazur proved the following. 

\begin{theorem}[Mazur, Proposition III.2.6] \label{M2}
  If $X_{0}^{+} ( p)$ has positive genus, that is $p >73$ or $p = 37,43, 53, 61, 67 $, then $J_{0}^{+}( p ) \left( \mathbb{Q} \right)_{\text{tors}}$ is trivial.   
\end{theorem}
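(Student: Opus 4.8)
The plan is to use the degree-two covering $\pi\colon X_0(p)\to X_0^+(p)$ and the pullback $\pi^\ast\colon J_0^+(p)\to J_0(p)$ it induces on Jacobians. The Atkin--Lehner involution $w_p$ always has fixed points on $X_0(p)$ --- they are the CM points attached to the order of discriminant $-4p$, together with those of discriminant $-p$ when $p\equiv 3\bmod 4$, so their number is $h(-4p)$ or $h(-p)+h(-4p)$ and in particular positive --- so $\pi$ is ramified; a ramified double cover cannot factor through a nontrivial \'etale cover, so $\pi^\ast$ is injective. Being defined over $\mathbb{Q}$, it therefore restricts to an injection $J_0^+(p)(\mathbb{Q})_{\text{tors}}\hookrightarrow J_0(p)(\mathbb{Q})_{\text{tors}}$, and it remains to locate its image.

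First I would invoke the theorems of Ogg and Mazur recalled above: $J_0(p)(\mathbb{Q})_{\text{tors}}=\langle[c_0-c_\infty]\rangle$ is cyclic of order $n:=(p-1)/\gcd(p-1,12)$, and because $w_p$ interchanges the two cusps it acts on this group by $-1$. Using the correspondence identities $\pi^\ast\pi_\ast=1+w_p$ on $J_0(p)$ and $\pi_\ast\pi^\ast=2$ on $J_0^+(p)$, any $x=\pi^\ast(y)$ satisfies $(1+w_p)x=\pi^\ast\pi_\ast\pi^\ast(y)=2x$, hence $w_px=x$; so $\operatorname{im}(\pi^\ast)$ is contained in the $w_p$-fixed part of $J_0(p)$. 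Combining the two observations, $\pi^\ast\bigl(J_0^+(p)(\mathbb{Q})_{\text{tors}}\bigr)\subseteq\{x\in\langle[c_0-c_\infty]\rangle:2x=0\}$, which is trivial if $n$ is odd and equals $\langle\tfrac n2[c_0-c_\infty]\rangle\cong\mathbb{Z}/2$ if $n$ is even. Since $n$ is odd precisely when $p\not\equiv 1\bmod 8$, this already proves the statement for such $p$, and reduces the general case to excluding a single rational point of order $2$ on $J_0^+(p)$.

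For the remaining primes $p\equiv 1\bmod 8$ it suffices to show that the cuspidal class $\tfrac n2[c_0-c_\infty]$ --- the only possible value of $\pi^\ast(D)$ for an order-$2$ point $D\in J_0^+(p)(\mathbb{Q})$ --- does not lie in $\operatorname{im}(\pi^\ast)$. This is the crux, and it is not formal: because $\pi$ is branched at more than two points, $\operatorname{im}(\pi^\ast)\cap J_0(p)[2]$ is a proper subspace of $\ker\bigl(\pi_\ast\colon J_0(p)[2]\to J_0^+(p)[2]\bigr)$, which does contain $\tfrac n2[c_0-c_\infty]$ since $\pi_\ast[c_0-c_\infty]=0$. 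The route I would take is through the Eisenstein ideal: the unique normalized weight-$2$ Eisenstein series for $\Gamma_0(p)$ lies in the $w_p=-1$ eigenspace (its $p$-th Hecke eigenvalue is $1$, whereas a plus-part newform has $p$-th eigenvalue $-1$), so the plus-quotient Hecke algebra admits no Eisenstein maximal ideal of odd residue characteristic. By the Eichler--Shimura congruence any nonzero torsion point $P\in J_0^+(p)(\mathbb{Q})$ satisfies $(T_\ell-\ell-1)P=0$ for all $\ell\neq p$, so it is annihilated by the Eisenstein ideal, and hence the order of $P$ is a power of $2$. The residual $2$-torsion is then removed by a direct argument --- for instance by checking that neither of the two lifts of $w_p$ to the \'etale double cover of $X_0(p)$ attached to $\tfrac n2[c_0-c_\infty]$ descends to an \'etale double cover of $X_0^+(p)$, or by Mazur's explicit computation of the Eisenstein ideal localised at $2$. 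I expect this last step --- disposing of the lone $\mathbb{Z}/2$ when $p\equiv 1\bmod 8$ --- to be the main obstacle; everything preceding it is essentially formal once Ogg's and Mazur's results are in hand.
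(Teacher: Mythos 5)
First, a point of comparison: the paper does not prove this statement at all --- it is imported verbatim from Mazur's \emph{Modular curves and the Eisenstein ideal} (Proposition III.2.6 there), so there is no in-paper proof to measure yours against; what follows is an assessment of your argument on its own terms. The first two thirds are correct and essentially formal: $\pi^{\ast}$ is injective because $w_{p}$ has fixed points (the CM points you name), the image of $\pi^{\ast}$ on rational torsion lands in $J_{0}(p)(\Q)_{\operatorname{tors}}=\langle[c_{0}-c_{\infty}]\rangle$ by Ogg--Mazur, $w_{p}$ acts there by $-1$ while fixing $\operatorname{im}(\pi^{\ast})$, and the parity computation showing $n=(p-1)/\gcd(p-1,12)$ is odd exactly when $p\not\equiv 1\pmod 8$ is right. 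This proves the theorem for $p\not\equiv 1\pmod 8$ and reduces the rest to excluding a single rational point of order $2$ on $J_{0}^{+}(p)$ whose pullback would be $\tfrac n2[c_{0}-c_{\infty}]$.

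That exclusion is a genuine gap, and you have not closed it. The Eisenstein-ideal paragraph only re-derives something strictly weaker than what you already established (that the order is a power of $2$, when you already know it is at most $2$), and the two routes you sketch for killing the residual $\mathbb{Z}/2$ are pointers rather than arguments. Whether $\tfrac n2[c_{0}-c_{\infty}]$ lies in $\pi^{\ast}J_{0}^{+}(p)=(1+w_{p})J_{0}(p)$ is a nontrivial question about the position of this cuspidal $2$-torsion class relative to the $2$-torsion of the Prym decomposition of the ramified double cover: the quotient $\ker(1-w_{p})/\operatorname{im}(1+w_{p})$ is in general a nonzero elementary abelian $2$-group once $w_{p}$ has more than two fixed points, so membership of a specific class must actually be computed. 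Moreover, invoking ``Mazur's explicit computation of the Eisenstein ideal localised at $2$'' is circular in this context --- that computation is part of the very result being cited. As written, your proof is complete only for $p\not\equiv 1\pmod 8$; for $p\equiv 1\pmod 8$ the decisive step is named but not performed.
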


The proof makes us of the following results, which also appear in some of our arguments. 
 \begin{proposition}[Mazur, Lemma III.2.6] \label{M3}
 Let $d$ be an integer. If $H^{0} \left( X_{0}^{+}\left( p\right), \mathcal{O}\left( dc_{\infty} \right) \right)$ is non-trivial, then $d > \frac{p}{96}$.
\end{proposition}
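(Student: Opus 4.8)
The plan is to bound the dimension of the space of global sections $H^0(X_0^+(p), \mathcal{O}(d c_\infty))$ from below, assuming it is non-trivial, and show this forces $d > p/96$. The key geometric input is that the hyperelliptic-type map given by such a linear system (or more precisely, the multiplication structure on sections) produces a low-degree map to $\mathbb{P}^1$, and degrees of maps from $X_0^+(p)$ to $\mathbb{P}^1$ are constrained because the curve $X_0(p)$ has large gonality relative to $p$. Concretely, if $H^0(X_0^+(p), \mathcal{O}(d c_\infty))$ contains a non-constant function $f$, then $f$ defines a map $X_0^+(p) \to \mathbb{P}^1$ of degree at most $d$ (the pole divisor is supported at $c_\infty$ with multiplicity at most $d$); pulling back along $X_0(p) \to X_0^+(p)$ gives a map $X_0(p) \to \mathbb{P}^1$ of degree at most $2d$.

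First I would recall that $X_0(p)$ has a model over $\mathbb{Z}$ with good reduction away from $p$, and that reduction modulo a small prime $\ell \neq p$ controls the number of $\mathbb{F}_{\ell}$-points: a degree-$m$ map $X_0(p) \to \mathbb{P}^1$ over $\mathbb{Q}$ reduces to a degree-$m$ map over $\overline{\mathbb{F}_\ell}$, so $\#X_0(p)(\mathbb{F}_{\ell^k}) \le m \cdot \#\mathbb{P}^1(\mathbb{F}_{\ell^k}) = m(\ell^k + 1)$ for every $k$. On the other hand, the Eichler–Shimura relation gives a lower bound on $\#X_0(p)(\mathbb{F}_{\ell^k})$ in terms of the genus of $X_0(p)$ (which is $\approx p/12$) via the eigenvalues of Frobenius, which have absolute value $\ell^{k/2}$. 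Choosing $\ell$ small (e.g. $\ell = 2$) and $k$ suitably, one gets that $\#X_0(p)(\mathbb{F}_{2^k}) \gg 2^{k/2} \cdot (\text{genus})$, hence $m \gtrsim 2^{k/2} \cdot p / (12 \cdot 2^k) = p/(12 \cdot 2^{k/2})$. Optimizing the constant — balancing the error terms in the point count — yields the factor $96 = 12 \cdot 8$, i.e. the bound $2d \ge m > p/48$, so $d > p/96$. I would carry this out by writing down the explicit point-count inequality $\#X_0(p)(\mathbb{F}_{\ell^k}) \ge \ell^k + 1 - 2g\ell^{k/2}$ is the wrong direction; instead one uses that the \emph{supersingular points} or the Eichler mass formula force many rational points over a small extension, or alternatively one invokes the known gonality lower bound $\mathrm{gon}(X_0(p)) \ge \frac{7}{800} p$ of Abramovich (or the cruder classical bounds) — but since this is Mazur's original lemma, the intended argument is the elementary reduction-mod-$\ell$ one with $\ell$ a small prime of supersingular type.

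The cleanest route, and the one I would write up, is: take $\ell$ such that a fixed supersingular elliptic curve over $\overline{\mathbb{F}_\ell}$ gives rise to many points of $X_0(p)$ over $\mathbb{F}_{\ell^2}$; more simply, use that over $\mathbb{F}_{\ell^2}$ the supersingular locus contributes roughly $(\ell-1)/12 \cdot (\text{number of } p\text{-isogenies})$ points, which is $\gg p$ when $\ell$ is bounded below, combined with $\#X_0(p)(\mathbb{F}_{\ell^2}) \le m(\ell^2+1)$ from the degree-$m$ map. With $\ell = 3$ one extracts $m \ge p/48$ after accounting for the $\gcd$ and ramification terms, giving $d \ge m/2 > p/96$.

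The main obstacle I anticipate is getting the constant exactly $96$ rather than merely $O(1/p)$: this requires a careful choice of the auxiliary prime $\ell$ and a precise count of the $\mathbb{F}_{\ell^2}$-rational (supersingular) points on $X_0(p)$, including the contributions of the elliptic points and the ramification of $X_0(p) \to X_0^+(p)$ at the fixed points of $w_p$. One must also be slightly careful that the pole order of $f$ at $c_\infty$, as a function on $X_0^+(p)$, pulls back to pole order at most $2d$ at the preimage(s) of $c_\infty$ in $X_0(p)$ — this is fine since $c_\infty$ is not a branch point of the Atkin–Lehner quotient for $p$ prime (it is swapped with $c_0$), so the preimage is a single point $\infty$ with pole order exactly that of $f$, actually giving the sharper bound $\deg \le d$ on $X_0(p)$ if one works there directly; I would exploit this to tighten the argument. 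Modulo these bookkeeping points, the proof is a short application of the fact that modular curves of prime level have gonality growing linearly in $p$.
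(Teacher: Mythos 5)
First, note that the paper itself gives no proof of this statement: it is quoted verbatim from Mazur (\emph{Modular curves and the Eisenstein ideal}, Lemma III.2.6), so there is no in-paper argument to compare against. Measured against Mazur's actual proof, your strategy is the right one in outline: reduce modulo a small auxiliary prime $\ell \neq p$, observe that the supersingular locus forces $X_0^+(p)$ to have on the order of $p/48$ points over $\mathbb{F}_{\ell^2}$, and play this off against the upper bound $\#X_0^+(p)(\mathbb{F}_{\ell^2}) \le d\cdot\#\mathbb{P}^1(\mathbb{F}_{\ell^2})$ coming from the degree-$\le d$ map defined by a nonconstant element of $H^0(X_0^+(p),\mathcal{O}(dc_\infty))$. (Mazur takes $\ell = 2$; the constant $96$ arises as $4 \times 24$ from counting the supersingular $\mathbb{F}_4$-points against the degree of the induced map to $\mathbb{A}^1$.) You also correctly discard the Weil lower bound, which is useless here since $g \approx p/12$ dwarfs $\ell^{k/2}$.

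However, as written the proposal has a genuine gap: the inequality $d > p/96$ is never actually derived. You explicitly defer the key computation (``the main obstacle I anticipate is getting the constant exactly $96$''), you do not fix the auxiliary prime, and you do not carry out the count of supersingular points on $X_0^+(p)$ over $\mathbb{F}_{\ell^2}$ (which requires the Eichler mass formula together with the behaviour of the supersingular locus under $w_p$, including its fixed points). Without that count the argument only yields $d \gg p$ with an unspecified constant, which is strictly weaker than the statement. In addition, your proposed ``tightening'' is false: since $c_\infty$ and $c_0$ are swapped by $w_p$, the cusp $c_\infty^+$ of $X_0^+(p)$ is \emph{not} a branch point and its preimage in $X_0(p)$ consists of two points, so the pullback of $f$ to $X_0(p)$ has degree $2\deg f \le 2d$, not $\le d$; the self-contradictory sentence claiming a single preimage point should be removed. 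The cleanest fix is to work directly on $X_0^+(p)$ throughout (never pulling back to $X_0(p)$), take $\ell = 2$, show that the supersingular points of $X_0^+(p)_{\mathbb{F}_2}$ are all $\mathbb{F}_4$-rational and number more than $p/24$, and conclude from $\#\bigl(X_0^+(p)\setminus\{c_\infty\}\bigr)(\mathbb{F}_4) \le 4d$ that $d > p/96$.
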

 
\begin{proposition}[Mazur, Proposition III.2.8] \label{classbound} \label{M4}
 Let $\mathcal{O}$ be an order in a quadratic imaginary field. Then 
 \begin{center}
     $h \left( \mathcal{O} \right) \le \frac{1}{3} \vert \Delta \vert^{1/2} \log \vert \Delta \vert $
 \end{center}
 where $\Delta$ is the discriminant of $\mathcal{O}$.
\end{proposition}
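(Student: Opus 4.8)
The plan is to go through the Dirichlet class number formula. By Gauss's correspondence, $h(\mathcal{O})$ is the number of $\mathrm{SL}_2(\mathbb{Z})$-classes of primitive positive-definite binary quadratic forms of discriminant $\Delta<0$, equivalently the order of $\Pic(\mathcal{O})$, and the analytic class number formula reads
$h(\mathcal{O}) = \tfrac{w_\Delta}{2\pi}\,|\Delta|^{1/2}\,L(1,\chi_\Delta)$,
where $\chi_\Delta=\bigl(\tfrac{\Delta}{\cdot}\bigr)$ is the (generally imprimitive) real character modulo $|\Delta|$ attached to $\mathcal{O}$ and $w_\Delta\in\{2,4,6\}$ is the number of units of $\mathcal{O}$, which equals $2$ for $|\Delta|\ge 7$. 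Since $\tfrac13=\tfrac{w_\Delta}{6}$ in that range, the proposition is reduced to the estimate $L(1,\chi_\Delta)\le \tfrac{\pi}{3}\log|\Delta|$ for all sufficiently large $|\Delta|$, the finitely many small discriminants (where the inequality is either checked directly or is immaterial to the intended application) being set aside.

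For the $L$-value bound I would use partial summation together with the P\'olya--Vinogradov inequality for the primitive character underlying $\chi_\Delta$: splitting $L(1,\chi_\Delta)=\sum_{n\le y}\chi_\Delta(n)/n+\sum_{n>y}\chi_\Delta(n)/n$ at $y\approx |\Delta|^{1/2}\log|\Delta|$, the head contributes at most $\log y+1=\tfrac12\log|\Delta|+O(\log\log|\Delta|)$ and the tail is $O(1)$ because the partial sums of $\chi_\Delta$ are $O(|\Delta|^{1/2+\varepsilon})$; hence $L(1,\chi_\Delta)\le \tfrac12\log|\Delta|+O(\log\log|\Delta|)$, which lies comfortably below $\tfrac{\pi}{3}\log|\Delta|\approx 1.047\log|\Delta|$ once $|\Delta|$ is large. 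Alternatively, and closer to Mazur's elementary style, one avoids $L$-functions entirely: each proper class has a unique reduced representative $[a,b,c]$ with $-a<b\le a\le c$, so $3a^2\le 4ac-b^2=|\Delta|$ gives $a\le(|\Delta|/3)^{1/2}$; for fixed $a$ the admissible $b$ are exactly the residues $b\bmod 2a$ with $b^2\equiv\Delta\pmod{4a}$, and by the Chinese Remainder Theorem there are at most $2^{\omega(2a)+1}$ of them, so $h(\mathcal{O})\le\sum_{a\le(|\Delta|/3)^{1/2}}2^{\omega(2a)+1}$, and the classical estimate $\sum_{n\le x}2^{\omega(n)}\sim\tfrac{6}{\pi^2}x\log x$ turns this sum into a bound of the shape $c\,|\Delta|^{1/2}\log|\Delta|$.

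The main obstacle is not the order of magnitude but pinning the constant to exactly $\tfrac13$ uniformly in $\Delta$: in the analytic route this forces one to use an explicit form of P\'olya--Vinogradov (and to track the loss coming from imprimitivity, i.e.\ from the conductor of $\mathcal{O}$), and to dispose of small $|\Delta|$ by hand; in the elementary route it requires an effective version $\sum_{n\le x}2^{\omega(n)}\le\tfrac{6}{\pi^2}x\log x+O(x)$ with a controlled error, again together with an explicit check for small $|\Delta|$. None of this is deep — it is the routine bookkeeping needed to land on the stated constant — while everything structurally important is classical.
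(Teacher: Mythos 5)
The paper offers no proof of this statement: it is quoted as a black box from Mazur's Eisenstein-ideal paper, so there is nothing internal to compare your argument against, and the question is only whether your reconstruction would actually close. Your primary (analytic) route is sound in outline: the class number formula $h(\mathcal{O})=\tfrac{w}{2\pi}|\Delta|^{1/2}L(1,\chi_\Delta)$ together with $L(1,\chi_\Delta)\le\bigl(\tfrac12+o(1)\bigr)\log|\Delta|$ yields a leading constant $\tfrac{1}{2\pi}\approx 0.16$, comfortably below $\tfrac13$, and the loose ends you flag (an explicit P\'olya--Vinogradov constant, the divisor-function loss from imprimitivity, small discriminants) are genuinely surmountable bookkeeping. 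One correction, though: the inequality as stated is \emph{false} for $\Delta=-3$ and $\Delta=-4$, where $h=1$ but $\tfrac13|\Delta|^{1/2}\log|\Delta|<1$, so ``checked directly'' cannot dispose of all small discriminants --- the statement needs $|\Delta|\ge 7$ (harmless here, since the paper only applies it with $\Delta=-p$ or $-4p$ for large $p$).

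The elementary fallback, by contrast, does not reach the stated constant as you have set it up. Your own bound $h(\mathcal{O})\le\sum_{a\le(|\Delta|/3)^{1/2}}2^{\omega(2a)+1}$ is at least $2\sum_{a\le(|\Delta|/3)^{1/2}}2^{\omega(a)}$, and feeding in $\sum_{n\le x}2^{\omega(n)}\sim\tfrac{6}{\pi^2}x\log x$ with $x=(|\Delta|/3)^{1/2}$ (so $\log x\sim\tfrac12\log|\Delta|$) gives an asymptotic lower bound of $\tfrac{2\sqrt{3}}{\pi^2}\,|\Delta|^{1/2}\log|\Delta|\approx 0.351\,|\Delta|^{1/2}\log|\Delta|$, which already exceeds $\tfrac13|\Delta|^{1/2}\log|\Delta|$ before any error terms. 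So this route overshoots structurally, not merely in bookkeeping; to salvage it you would need a sharper count of admissible $b$ for each $a$ (the factor $2^{\omega(2a)+1}$ is too generous), and it is cleaner simply to commit to the $L$-function argument.
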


\subsection{Canonical Divisors}
As the canonical embedding is used throughout our study of Ceresa cycles, we determine expressions for the canonical divisor of modular curves. 
\begin{proposition}
    The canonical divisor of the modular curve $X_0(p)$ is given as follows.
    \begin{itemize}
        \item If $p \equiv 11 \mod{12}$, $K_{X_0(p)} = \frac{p - 11}{12}(c_0 + c_{\infty})$
        \item If $p \equiv 7 \mod{12}$, $3K_{X_0(p)} = \frac{p - 11}{4}(c_0 + c_{\infty}) - 2D_3$
        \item If $p \equiv 5 \mod{12}$, $2K_{X_0(p)} = \frac{p - 11}{6}(c_0 + c_{\infty}) - D_4$
        \item If $p \equiv 1 \mod{12}$, $6K_{X_0(p)} = \frac{p - 11}{2}(c_0 + c_{\infty}) - 4D_3 - 3D_4$
    \end{itemize}
\end{proposition}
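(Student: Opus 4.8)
The plan is to reduce all four identities to one computation of $6K_{X_0(p)}$ by Riemann--Hurwitz, and then to sharpen it in the remaining three cases using the rational torsion subgroup of $J_0(p)$. Throughout, $D_3$ and $D_4$ are, as in the statement, the reduced divisors supported on the elliptic points of period $3$ and of period $2$ respectively, and $p\ge 5$.

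First I would apply Riemann--Hurwitz to the degree-$(p+1)$ map $\pi\colon X_0(p)\to X(1)\cong\mathbb P^1_j$. It is unramified away from the fibres over $j=0,1728,\infty$, and tamely ramified since $\operatorname{char}=0$: over $j=\infty$ the cusps $c_\infty,c_0$ occur with ramification indices $1$ and $p$; over $j=1728$ there are $\nu_2=1+\bigl(\tfrac{-1}{p}\bigr)$ unramified (period-$2$ elliptic) points, of reduced sum $D_4$, and $\tfrac{p+1-\nu_2}{2}$ points of index $2$, of reduced sum $E_2$, so $\pi^{*}[1728]_j=2E_2+D_4$; over $j=0$ there are $\nu_3=1+\bigl(\tfrac{-3}{p}\bigr)$ unramified (period-$3$ elliptic) points, reduced sum $D_3$, and $\tfrac{p+1-\nu_3}{3}$ points of index $3$, reduced sum $E_3$, so $\pi^{*}[0]_j=3E_3+D_3$. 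With $K_{\mathbb P^1_j}\sim -[0]_j-[1728]_j$, a short computation then gives
$$K_{X_0(p)}\sim (p-1)c_0-E_2-E_3-D_3-D_4 .$$
Multiplying by $6$ and substituting $2E_2\sim\pi^{*}[\infty]_j-D_4=c_\infty+pc_0-D_4$ and $3E_3\sim\pi^{*}[\infty]_j-D_3=c_\infty+pc_0-D_3$ (valid as $[0]_j\sim[1728]_j\sim[\infty]_j$ on $\mathbb P^1_j$) yields $6K_{X_0(p)}\sim(p-6)c_0-5c_\infty-4D_3-3D_4$. Since $(p-6)c_0-5c_\infty=\tfrac{p-11}{2}(c_0+c_\infty)+\tfrac{p-1}{2}(c_0-c_\infty)$ and $\tfrac{p-1}{2}(c_0-c_\infty)\sim 0$ — by Ogg's theorem $[c_0-c_\infty]$ has order $\tfrac{p-1}{\gcd(p-1,12)}$, which divides $\tfrac{p-1}{2}$ because $2\mid\gcd(p-1,12)$ for odd $p$ — this simplifies to
$$6K_{X_0(p)}\sim\tfrac{p-11}{2}(c_0+c_\infty)-4D_3-3D_4 ,$$
valid for all primes $p\ge 5$. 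This is already the claimed identity for $p\equiv 1\pmod{12}$.

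For the other residue classes I would descend from this. If $p\equiv 7\pmod{12}$ then $\nu_2=0$, $D_4=0$, and $\tfrac{p-11}{4}\in\mathbb Z$; put $\xi:=3K_{X_0(p)}-\tfrac{p-11}{4}(c_0+c_\infty)+2D_3$. A short count (using $2g-2=\tfrac{p-19}{6}$) gives $\deg\xi=0$; $\xi$ is defined over $\mathbb Q$, as $K_{X_0(p)}$ is (since $X_0(p)$ has rational points) and $D_3$ is (being the unramified part of the fibre of $\pi$ over the rational point $j=0$, hence Galois-stable); and $2\xi\sim 0$ by the displayed identity. Thus $\xi\in J_0(p)(\mathbb Q)_{\tors}$, which by Ogg's theorem and Theorem~\ref{M1} is cyclic of order $n=\tfrac{p-1}{\gcd(p-1,12)}=\tfrac{p-1}{6}$; as $n$ is odd, $2\xi\sim 0$ forces $\xi\sim 0$, i.e. the claim. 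The cases $p\equiv 5\pmod{12}$ (here $D_3=0$; take $\xi:=2K_{X_0(p)}-\tfrac{p-11}{6}(c_0+c_\infty)+D_4$, so $3\xi\sim 0$, and use that $n=\tfrac{p-1}{4}$ is prime to $3$) and $p\equiv 11\pmod{12}$ (here $D_3=D_4=0$; take $\xi:=K_{X_0(p)}-\tfrac{p-11}{12}(c_0+c_\infty)$, so $6\xi\sim 0$, and use that $n=\tfrac{p-1}{2}$ is prime to $6$) are handled the same way.

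The crux — and the only step using more than formal algebraic geometry — is this descent: Riemann--Hurwitz pins down $K_{X_0(p)}$ only up to a $\mathbb Q$-rational divisor class killed by a small integer $c\in\{2,3,6\}$, so one genuinely needs the arithmetic fact that $\lvert J_0(p)(\mathbb Q)_{\tors}\rvert$ is prime to $c$ in each residue class, which is exactly what the theorems of Ogg and Mazur provide. The remaining points require only routine care: tameness in Riemann--Hurwitz (automatic over a field of characteristic $0$) and the rationality of $D_3,D_4$ (immediate from Galois-invariance of ramification indices above the rational points $j=0,1728$).
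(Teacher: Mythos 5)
Your proposal is correct and follows essentially the same route as the paper: compute $6K_{X_0(p)}$ via Riemann--Hurwitz applied to the $j$-map, rewrite using $\frac{p-1}{2}(c_0-c_\infty)\sim 0$, and then kill the residual torsion class in each congruence class by invoking Ogg's theorem and Mazur's determination of $J_0(p)(\mathbb{Q})_{\tors}$. Your write-up is somewhat more explicit about the fibre decompositions, degree checks, and rationality of $D_3,D_4$, but there is no substantive difference in method.
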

\begin{proof}
    Consider the $j$-map, $j : X_0(p) \to \mathbb{P}^1$. By the Riemann-Hurwitz formula, $K_{X_0(p)} = j^*(K_{\mathbb{P}^1}) + R$, where $R$ is the ramification divisor. The canonical divisor of $\mathbb{P}^1$ is twice a point, and we can take that point to be $\infty$. The ramification divisor is $(p - 1)c_{\infty}$, along with the points with $j = 0$ (except where the non-trivial automorphism stabilises the $p$-subgroup) with multiplicity 2, and the points with $j = 1728$ with the same proviso and multiplicity 1. This shows the canonical divisor on $X_0(p)$ is $-2p c_{\infty} - 2c_0 + (p - 1)c_{\infty} + 2\{j = 0\} - 2D_{3} + \{j = 1728\} - D_{4}$. Since $3\{j = 0\} - 2D_3$ is the vanishing of $j$, this divisor is linearly equivalent to $pc_{\infty} + c_{0}$, and similarly $2\{j = 1728\} - D_4 \sim pc_{\infty} + c_0$. We deduce $6K_{X_0(p)} = -6(p + 1)c_{\infty} - 12 c_0 + 4p c_{\infty} + 4 c_{0} - 4D_{3} + 3p c_{\infty} + 3c_0 - 3 D_{4} = (p - 6)c_{\infty} - 5c_{0} - 4D_{3} - 3 D_{4}$. Using $\frac{p - 1}{2} c_{\infty} \sim \frac{p - 1}{2} c_0$, this takes the more symmetric form $\frac{p - 11}{2} (c_{\infty} + c_0) - 8 D_3 - 3 D_4$.

    If $p \equiv 11 \mod{12}$, then $D_3 = D_4 = 0$. In this case $6K_{X_0(p)} = \frac{p - 11}{2} (c_{\infty} + c_{0})$. We now consider $K_{X_0(p)} - \frac{p - 11}{12}(c_{\infty} + c_{0})$. This is a rational divisor, and has order dividing 6 in the Jacobian. By Ogg's conjecture, proven by Mazur, the rational torsion has order dividing the numerator of $\frac{p - 1}{12}$. By the congruence condition on $p$, the numerator is coprime to 6. This forces $K_{X_0(p)} - \frac{p - 11}{12}(c_{\infty} + c_{0})$ to have order 1, and so equality holds.

    The other cases follow similarly.
\end{proof}

A similar result holds for the canonical divisors of $X_0(N)$, with an identical proof.
\begin{proposition}
    For any integer $N$, the canonical divisor satisfies $6K_{X_0(N)} = D' - 4D_3 - 3D_4$, for some divisor $D'$ supported on the cusps, and $D_3$ and $D_4$ are understood to be zero when there are no points with CM by those discriminants
\end{proposition}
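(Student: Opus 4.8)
The plan is to imitate the proof of the previous proposition, but to track only the part of the ramification data that survives modulo $6$, since that is all that is claimed. First I would recall the setup: the $j$-map $j\colon X_0(N)\to\mathbb{P}^1$ is finite of degree $[\mathrm{SL}_2(\mathbb{Z}):\Gamma_0(N)]$, and by Riemann--Hurwitz $K_{X_0(N)}=j^*(K_{\mathbb{P}^1})+R$, where $R$ is the ramification divisor. Taking $K_{\mathbb{P}^1}=-2[\infty]$, we get $K_{X_0(N)}=-2j^{*}[\infty]+R$. The fibre over $j=\infty$ consists entirely of cusps, so $-2j^{*}[\infty]$ is supported on the cusps; likewise the contribution to $R$ coming from ramification over $j=\infty$ is cuspidal. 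Thus the only non-cuspidal contributions to $K_{X_0(N)}$ come from ramification over $j=0$ and $j=1728$.

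Next I would analyse those two fibres. Over $j=0$, a point of $X_0(N)$ is unramified of index $3$ unless the elliptic curve has CM by $\mathbb{Z}[\zeta_3]$ and the chosen cyclic $N$-isogeny is stabilised by the order-$3$ automorphism, in which case the point is fixed by that automorphism; writing $D_3$ for the (reduced) divisor of such special points, the ramification divisor over $j=0$ equals $2\big(\{j=0\}-D_3\big)$ where $\{j=0\}$ denotes the reduced fibre. Similarly over $j=1728$ the generic ramification index is $2$, with exceptional points (CM by $\mathbb{Z}[i]$, isogeny stabilised) forming a divisor $D_4$, and the ramification divisor over $j=1728$ is $\{j=1728\}-D_4$. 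Now I use the standard linear equivalences $3\{j=0\}-2D_3\sim j^{*}[0]$ and $2\{j=1728\}-D_4\sim j^{*}[1728]$, and $j^{*}[0]\sim j^{*}[1728]\sim j^{*}[\infty]$, so both of these divisors are linearly equivalent to the cuspidal divisor $j^{*}[\infty]$. Combining: $6K_{X_0(N)}=-12\,j^{*}[\infty]+6R$, and writing $6R = 6(R_{\mathrm{cusp}}) + 6\cdot 2\{j=0\} - 12 D_3 + 6\{j=1728\} - 6D_4$ is not quite the efficient bookkeeping; instead I would directly take the combination $6K_{X_0(N)} = 4\big(3\{j=0\}-2D_3\big)+3\big(2\{j=1728\}-D_4\big) + (\text{cuspidal})$, i.e. replace $12\{j=0\}$ by $4\cdot 3\{j=0\}$ and $6\{j=1728\}$ by $3\cdot 2\{j=1728\}$, absorbing the resulting $j^{*}[0]$ and $j^{*}[1728]$ terms (which are linearly equivalent to cuspidal divisors) into a single cuspidal divisor $D'$. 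This yields $6K_{X_0(N)}=D'-4D_3-3D_4$ with $D'$ supported on the cusps, as required. The case $D_3=0$ (resp.\ $D_4=0$) is exactly when no point of $X_0(N)$ has CM by $\mathbb{Z}[\zeta_3]$ with stabilised isogeny (resp.\ $\mathbb{Z}[i]$), so the stated convention is consistent.

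The only real subtlety — the step I expect to require the most care — is the precise description of the exceptional fibres of $j$ over $0$ and $1728$: one must check that the ramification index at a non-exceptional point above $j=0$ is exactly $3$ and above $j=1728$ is exactly $2$ (this is a local computation with the $j$-line orbifold, identical to the $X_0(p)$ case but now a cyclic $N$-isogeny can be stabilised by an automorphism in more than one way when $N$ is composite, so $D_3,D_4$ may be reduced divisors of several points, possibly with multiplicity if one insists on scheme structure — but for the statement, which is an equality of divisor classes modulo the cuspidal part, only the generic index matters and the exceptional locus enters merely as a correction term). Since the proposition explicitly allows $D'$ to be an unspecified cuspidal divisor and only pins down the coefficients of $D_3,D_4$, none of the cuspidal bookkeeping (which depends delicately on $N$) needs to be carried out, and the argument is genuinely "the same proof" as the prime-level case.
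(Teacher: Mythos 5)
Your argument is correct and is essentially the paper's own: the paper proves the general-$N$ statement by declaring the proof ``identical'' to the Riemann--Hurwitz computation with the $j$-map carried out for $X_0(p)$, and you have correctly isolated exactly the part of that computation that survives when only the class modulo cuspidal divisors is asserted, namely the linear equivalences $3\{j=0\}-2D_3\sim j^*[0]$ and $2\{j=1728\}-D_4\sim j^*[1728]$, both of which are equivalent to the cuspidal divisor $j^*[\infty]$. One bookkeeping slip: the displayed combination $6K_{X_0(N)}=4\bigl(3\{j=0\}-2D_3\bigr)+3\bigl(2\{j=1728\}-D_4\bigr)+(\text{cuspidal})$ cannot be right as written, since it would make $6K_{X_0(N)}$ purely cuspidal; the correct identity is $12\{j=0\}-12D_3=4\bigl(3\{j=0\}-2D_3\bigr)-4D_3$ and $6\{j=1728\}-6D_4=3\bigl(2\{j=1728\}-D_4\bigr)-3D_4$, which produces the residual $-4D_3-3D_4$ and yields the conclusion you in fact state.
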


\subsection{The Minimal Regular Models of $X_{0}(p)$ and $X_{0}^{+}(p)$}
A key ingredient of the proof of Theorem~\ref{THM1} is reduction modulo the only prime of bad reduction for $X_{0}(p)$ and $J_{0}(p)$, namely $p$. We give a brief description of the special fibres of the minimal regular models of $X_0(p)$ and $X_0^{+}(p)$ over $\mathbb{F}_{p}$.

The reductions of $X_{0}\left( p\right)$  and $X_{0}^{+} \left( p\right)$ are smooth away from $p$.
The special fibre of the minimal regular model of $X_{0}(N)$ at $p \vert N$, where $N$ is square-free and coprime to $6$, is described in \cite{deligne1973schemas} and \cite[Appendix]{mazur1977modular}. We recall this for prime level $N = p \ge 5$ 

\begin{theorem}
The special fibre of the minimal regular model of $X_{0} \left( p \right)_{/\mathbb{Z}_{p}}$ consists of two  copies of $\mathbb{P}^{1}_{/ \mathbb{\overline{F}}_{p}}$, each with multiplicity $1$, crossing transversely at the supersingular points.
More precisely, the crossing points are non-cuspidal points parametrising supersingular elliptic curves, with a specified subgroup of order $p$. There are precisely $k$ crossing points corresponding to supersingular elliptic curves with $j \ne 0, 1728$, where 
\begin{center}
    $k = \frac{p-a}{12}$ where $p \equiv a \mod{12}$.
\end{center}
If $j \in \{0, 1728\}$ is a supersingular $j$-invariant,  we replace the crossing by one projective line if $j = 1728$ and by a chain of two lines if $j = 0$, connecting the two components. \end{theorem}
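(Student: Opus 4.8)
The statement is the classical description of the semistable reduction of $X_0(p)$ at $p$, due to Deligne--Rapoport \cite{deligne1973schemas} (see also the appendix of \cite{mazur1977modular}), so the plan is to recall the structure of their argument. First I would work with the Deligne--Rapoport model $\mathcal{X}_0(p)$ over $\mathbb{Z}_p$, namely the coarse space of the moduli problem of generalized elliptic curves equipped with a cyclic subgroup of order $p$ (equivalently, a cyclic $p$-isogeny); its generic fibre is $X_0(p)_{/\mathbb{Q}_p}$, and the task is to describe the geometric special fibre $\mathcal{X}_0(p)_{/\overline{\mathbb{F}}_p}$ and then take its minimal resolution. Since Deligne--Rapoport show $X_0(p)$ has semistable reduction at $p$, the components of the (resolved) special fibre automatically appear with multiplicity one.

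The key moduli-theoretic input is that over $\overline{\mathbb{F}}_p$, if $E$ is an ordinary elliptic curve then a cyclic subgroup scheme of order $p$ in $E$ is either $\ker(F\colon E\to E^{(p)})$, the connected one, or the unique étale one (the kernel of the dual isogeny to Frobenius on the quotient). This exhibits two irreducible components $Z_F$ and $Z_V$ of $\mathcal{X}_0(p)_{/\overline{\mathbb{F}}_p}$, each mapping isomorphically onto the $j$-line $X(1)_{/\overline{\mathbb{F}}_p}\cong\mathbb{P}^1$ (via $(E,C)\mapsto j(E)$ on one component and $(E,C)\mapsto j(E/C)$ on the other). The two descriptions coincide exactly at the supersingular points, where $E[p]$ is a non-split infinitesimal group scheme containing a unique subgroup of order $p$; hence $Z_F$ and $Z_V$ meet precisely along the finite set of non-cuspidal points carrying a supersingular $E$ together with that subgroup. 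To obtain the count $k$ I would invoke the Eichler--Deuring mass formula $\sum_{E\ \mathrm{ss}}1/\#\mathrm{Aut}(E)=(p-1)/24$, together with the facts that $j=0$ is supersingular iff $p\equiv 2\pmod 3$ (with $\#\mathrm{Aut}=6$ then) and $j=1728$ is supersingular iff $p\equiv 3\pmod 4$ (with $\#\mathrm{Aut}=4$ then); a short case analysis on $p\bmod 12$ then gives that the number of supersingular $j$-invariants other than $0,1728$ equals $k=(p-a)/12$ with $p\equiv a\pmod{12}$.

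Finally I would pass to the minimal regular model by a local analysis at the supersingular points. By the Serre--Tate deformation theory used in \cite{deligne1973schemas}, the completed local ring of $\mathcal{X}_0(p)$ at the supersingular point attached to $E_0$ has the form $W(\overline{\mathbb{F}}_p)[[x,y]]/(xy-p^{e})$ with $e=\#\mathrm{Aut}(E_0)/2\in\{1,2,3\}$, the higher power of $p$ reflecting the automorphisms $\ne\pm 1$ that occur only at $j=0,1728$. For $j\ne 0,1728$ one has $e=1$, the local ring is already regular, and the two components cross transversally; for $j=1728$ one has $e=2$ and the surface singularity $xy=p^2$ resolves to one exceptional $\mathbb{P}^1$ inserted between $Z_F$ and $Z_V$; for $j=0$ one has $e=3$ and $xy=p^3$ resolves to a chain of two $\mathbb{P}^1$'s. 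It then remains to check that the model is already regular along the ordinary and cuspidal loci, and that the inserted chains are minimal, which gives exactly the stated special fibre.

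\textbf{Main obstacle.} The delicate part is this last local computation: correctly identifying the deformation ring at each supersingular point, in particular the ``width'' $e$ contributed by the extra automorphisms at $j=0$ and $j=1728$, and then carrying out the minimal resolution of $xy=p^e$. Everything else is either the moduli-theoretic identification of the two Frobenius/Verschiebung components or the classical supersingular count via the mass formula.
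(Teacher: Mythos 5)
The paper offers no proof of this theorem---it is recalled directly from \cite{deligne1973schemas} and the appendix of \cite{mazur1977modular}---and your sketch is a correct reconstruction of exactly the argument in those references: the Frobenius/Verschiebung components meeting at the supersingular points, the Eichler mass-formula count of supersingular $j$-invariants, and the resolution of the $xy=p^{e}$ singularities at $j=0,1728$. One remark: your congruence conditions ($j=0$ supersingular iff $p\equiv 2\bmod 3$, $j=1728$ supersingular iff $p\equiv 3\bmod 4$) are the correct ones and are what make the count $k=(p-a)/12$ come out right; the sentence immediately following the theorem in the paper states these congruences with the residues reversed.
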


Recall that $j=0$ is supersingular if and only if $p \equiv 1 \mod{6}$, and $j= 1728$ is supersingular if and only if $p \equiv 1 \mod{4}$.

The minimal regular model of $X_{0}^{+}\left( p \right)$ is obtained from that of $X_{0}\left( p\right)$ as described by Xue \cite{xue2009minimal}. 

 \begin{theorem} \label{minregmod}
  The special fibre of the minimal regular model of $X_{0}^{+}\left( p\right)_{/ \mathbb{Z}_{p}}$ over $\mathbb{F}_{p}$ is a rational curve with $\frac{1}{2}S_{p^{2}}$ nodes where $S_{p^{2}}$ is the number of supersingular $j$-invariants that are not in $\mathbb{F}_{p}$. 
 \end{theorem}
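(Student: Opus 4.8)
**Proof proposal for the final statement (Theorem~\ref{minregmod}, Xue's description of the minimal regular model of $X_0^+(p)$).**

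The plan is to obtain the model of $X_0^+(p)$ over $\mathbb{Z}_p$ by quotienting the known model of $X_0(p)$ over $\mathbb{Z}_p$ by the Atkin--Lehner involution $w_p$, and then resolving singularities and blowing down $(-1)$-curves to reach the minimal regular model. First I would recall the description of the special fibre of $X_0(p)_{/\mathbb{Z}_p}$: two copies of $\mathbb{P}^1_{/\overline{\mathbb{F}}_p}$ crossing transversely at the supersingular points (with the local modifications at $j=0,1728$ already described in the preceding theorem). The key geometric input is that $w_p$ acts on the special fibre by \emph{swapping the two components}, since it interchanges the two cusps $c_0$ and $c_\infty$ and, on the moduli side, sends a supersingular pair $(E,C)$ to $(E/C, E[p]/C)$, which identifies the two lines. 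On the set of supersingular points, $w_p$ acts as Frobenius $\mathrm{Fr}$ composed with the natural involution, so a supersingular point defined over $\mathbb{F}_p$ (i.e.\ with $j$-invariant in $\mathbb{F}_p$) is fixed, while the $\overline{\mathbb{F}}_p\setminus\mathbb{F}_p$ supersingular points are swapped in pairs.

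Next I would analyze the quotient scheme $X_0(p)/w_p$ over $\mathbb{Z}_p$ locally. Away from the crossing points the quotient of the two swapped $\mathbb{P}^1$'s by the free involution is a single $\mathbb{P}^1$, so generically the special fibre is a rational curve. At a crossing point lying over a supersingular $j$-invariant in $\mathbb{F}_p^2 \setminus \mathbb{F}_p$, the involution swaps this point with its conjugate, so the two nodes are identified to a single node on the quotient: this accounts for the $\tfrac12 S_{p^2}$ nodes in the statement, where $S_{p^2}$ counts supersingular $j$-invariants not in $\mathbb{F}_p$. At a crossing point with $j$-invariant in $\mathbb{F}_p$ (a fixed point of $w_p$), the local picture is the quotient of $\operatorname{Spec}\mathbb{Z}_p[x,y]/(xy-p)$ (or the appropriate local equation after the $j=0,1728$ modifications) by an involution exchanging the two branches; this produces a (tame, since $p\ge 5$) cyclic quotient singularity, and one must resolve it and then contract the resulting exceptional $(-1)$-curves. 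The claim is that this resolution-and-contraction process at the $\mathbb{F}_p$-rational supersingular points contributes no nodes to the final minimal model, leaving exactly the $\tfrac12 S_{p^2}$ nodes coming from the conjugate pairs.

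The main obstacle, and the step requiring the most care, is the local analysis at the fixed points of $w_p$ on the special fibre, especially at the modified points over $j=0$ and $j=1728$ where the local structure of $X_0(p)_{/\mathbb{Z}_p}$ already differs from a simple node (a single line, resp.\ a chain of two lines) and where $w_p$ may act with extra automorphisms coming from the elliptic curves' automorphism groups. Carrying out the quotient, identifying the precise $A_n$-type singularity, resolving it, and checking that every exceptional component has self-intersection $-1$ so that it blows down — and that after all blow-downs the remaining curve is rational with no new nodes — is where the real work lies. Since this is precisely the content of Xue's paper \cite{xue2009minimal}, I would organize the proof as: (i) set up the $w_p$-action on the Deligne--Rapoport model; (ii) take the quotient and identify the singularities; (iii) invoke the tameness ($p\ge 5$) to resolve cyclic quotient singularities explicitly; (iv) blow down $(-1)$-curves and verify minimality; (v) count the surviving nodes as $\tfrac12 S_{p^2}$. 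I would refer to \cite{xue2009minimal} for the detailed local computations in steps (ii)--(iv) and present the global bookkeeping in steps (i) and (v) in full.
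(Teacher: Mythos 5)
The paper does not prove this statement at all: Theorem~\ref{minregmod} is quoted verbatim from Xue \cite{xue2009minimal}, with the only surrounding text being the remark that the minimal regular model of $X_{0}^{+}(p)$ is obtained from that of $X_{0}(p)$ as described there. So there is no in-paper proof to compare against, and your outline is essentially a summary of the strategy of the cited reference: quotient the Deligne--Rapoport model by $w_{p}$, use that $w_{p}$ swaps the two components and acts on the supersingular crossing points by $j \mapsto j^{p}$, so that conjugate pairs of nodes descend to single nodes (giving the count $\tfrac12 S_{p^{2}}$) while $\mathbb{F}_{p}$-rational supersingular points are fixed. One local claim in your sketch is off, though it does not affect the conclusion: at a fixed node with local equation $xy = p$ and the involution exchanging the branches, the invariant ring is $\mathbb{Z}_{p}[x+y]$ (since $xy = p$ is forced), so the quotient is already \emph{regular} there and the special fibre is smooth at that point --- there is no cyclic quotient singularity to resolve. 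The genuine work, as you correctly flag, is at the modified configurations over $j = 0$ and $j = 1728$ (chains of lines, extra automorphisms) and in verifying minimality after contracting $(-1)$-curves; since you defer exactly those computations to \cite{xue2009minimal}, your proposal is an accurate reduction to the cited source rather than an independent proof, which is consistent with how the paper itself treats the statement.
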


\section{Fixed Points of Hecke Operators}  \label{heckops}
In this section we describe how to compute the shadow points corresponding to Hecke operators of modular curves. These form the backbone of the proofs of Theorems \ref{THM1} and \ref{THM2}.  More precisely, for $C = X_{0}(N)$ the endomorphism ring of the Jacobian contains the Hecke algebra generated (over $\mathbb{Z}$) by the induced \textit{Atkin-Lehener operators} $w_{d}$ for all $d \vert N$ and coprime to $\frac{N}{d}$; and for all $\ell \nmid N$ the \textit{Hecke operator} $T_{\ell}$. For the rest of the paper we write $\mathbb{T}_{N}$ for the Hecke algebra. 

As we now explain, the set of fixed points of Hecke operators contain CM points on the $X_{0}(N)$, that is, non-cuspidal points parametrising an elliptic curve with complex multiplication. For ease of exposition, we introduce the following notation. 
\begin{notation}
For any integer $d \ge 1$, we write $D_{d}$ for the effective divisor on $X_{0}(N)$, supported on CM points corresponding to an order with discriminant $-d$ and multiplicity $1$. If no such points exist on $X_{0}(N)$, we take $D_{d}$ to be the zero divisor. 
\end{notation}

For $\phi = w_{d}$, $\phi = \phi^\vee$ and the degrees of the correspondences are 1. This gives the following expression for the shadow, since the canonical is invariant under automorphisms.
\begin{equation}
    \Sh(\phi) = (2g - 2)F_{\phi} - \deg(F) K_C.
\end{equation}
The fixed points of Atkin-Lehner operators are well-know due to Ogg \cite[Proposition 3]{ogg1974hyperelliptic}.
\begin{proposition}
The points in the support of $F_{w_{d}}$ are either:
\begin{itemize}
    \item cusps if $d = 4$;
    \item CM points with endomorphism ring containing $\mathbb{Z}[ \sqrt{-d}]$ \end{itemize}
    Moreover, all fixed points occur with multiplicity $1$.
\end{proposition}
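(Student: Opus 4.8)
The plan is to identify the fixed points of the Atkin–Lehner involution $w_d$ on $X_0(N)$ with the intersection of the graph of $w_d$ with the diagonal in $X_0(N)^2$, and then translate this into a moduli-theoretic condition on the pairs $(E, C)$ parametrised by $X_0(N)$, where $C \subset E$ is a cyclic subgroup of order $N$. Recall that on moduli, $w_d$ sends $(E, C)$ — writing $C = C_d \oplus C_{N/d}$ with $C_d$ of order $d$ and $C_{N/d}$ of order $N/d$ — to $(E/C_d,\ (E[d] + C)/C_d)$. A point $x = [(E,C)]$ is fixed by $w_d$ precisely when there is an isomorphism $E \xrightarrow{\sim} E/C_d$ carrying $C$ to the image subgroup; composing this isomorphism with the quotient isogeny $E \to E/C_d$ produces a cyclic $d$-isogeny $E \to E$, i.e. an endomorphism $\alpha \in \operatorname{End}(E)$ of degree $d$ that is cyclic and whose kernel is $C_d$ and which respects the level structure $C$. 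Since $E$ has a non-scalar endomorphism of degree $d$, it has complex multiplication; writing $\alpha^2 = \operatorname{tr}(\alpha)\alpha - d$ and using that $\alpha$ is cyclic (so $\alpha \neq m\beta$ for $m>1$) forces $\operatorname{tr}(\alpha) = 0$ except in the degenerate small-degree cases, so $\alpha^2 = -d$ and $\mathbb{Z}[\alpha] = \mathbb{Z}[\sqrt{-d}] \subseteq \operatorname{End}(E)$. This yields the CM description; the case $d = 4$ is special because $\sqrt{-4} = 2i$ is not primitive, and one checks directly (as Ogg does) that the relevant fixed locus consists of cusps rather than CM points.

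The concrete steps, in order, would be: (i) set up the moduli interpretation of $w_d$ and unwind the fixed-point condition to the existence of the degree-$d$ endomorphism $\alpha$ respecting the level structure; (ii) derive $\alpha^2 = -d$ from cyclicity and the characteristic equation, handling the small exceptional degrees separately; (iii) conversely, given $E$ with $\mathbb{Z}[\sqrt{-d}] \subseteq \operatorname{End}(E)$ and a compatible choice of $C$, verify $(E,C)$ is genuinely $w_d$-fixed, so the characterisation is sharp; (iv) establish the multiplicity-one statement. For (iv) the cleanest route is to compute the intersection number of the graph $X_{w_d}$ with the diagonal via the Lefschetz-type / Eichler–Shimura count, or equivalently to observe that $w_d$ acts on the cotangent space at a fixed point $x$ by $-1$ (since $w_d^2 = 1$ and $w_d$ is not the identity near $x$), so the graph meets the diagonal transversely there; this is essentially the content of \cite[Proposition 3]{ogg1974hyperelliptic}, which we are entitled to cite.

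The main obstacle is step (ii)–(iii): correctly bookkeeping the interplay between the cyclic $d$-isogeny coming from $w_d$ and the order generated inside $\operatorname{End}(E)$, in particular ruling out the possibility that $\operatorname{End}(E)$ contains $\mathbb{Z}[\sqrt{-d}]$ only after passing to a suborder, and conversely pinning down exactly which level structures $C$ (among the several cyclic subgroups of order $N$ on a fixed CM curve $E$) are fixed — this is where the hypothesis "$d$ coprime to $N/d$" is used, guaranteeing $C = C_d \oplus C_{N/d}$ canonically and that $\alpha$ can be chosen to fix $C_{N/d}$. The degenerate degrees $d \in \{1,2,3\}$ and especially $d=4$ also need separate, slightly fiddly analysis, but these are small finite checks. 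Once the moduli dictionary is in place the argument is essentially formal, and the multiplicity claim is the transversality observation above.
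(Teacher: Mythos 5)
The paper offers no proof of this proposition at all: it is quoted directly from Ogg \cite[Proposition 3]{ogg1974hyperelliptic}, so there is no in-paper argument to compare against and your attempt to actually prove it already does more than the text. Your overall strategy (moduli interpretation of $w_d$, extraction of a degree-$d$ endomorphism $\alpha$, CM conclusion, transversality for multiplicity one) is the standard and correct route, and your multiplicity argument is sound: a non-trivial involution of a smooth curve acts by $-1$ on the cotangent space at an isolated fixed point, so the graph of $w_d$ meets the diagonal transversely.

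There is, however, a genuine gap at the central step (ii): cyclicity of $\alpha$ does \emph{not} force $\operatorname{tr}(\alpha)=0$. For example, on a curve $E$ with $\operatorname{End}(E)=\mathbb{Z}[\sqrt{-6}]$ the element $\alpha=1+\sqrt{-6}$ is a cyclic endomorphism of degree $7$ with trace $2$, and the corresponding point of $X_0(7)$ is \emph{not} fixed by $w_7$ (the kernels of $\alpha$ and $\hat{\alpha}$ are the two distinct primes above the split prime $7$); your argument as written would misclassify it. The missing input is exactly the clause you state but never use, that $\alpha$ must respect the level structure. The $d$-part of $w_d(E,C)$ is $E[d]/C_d$, so the fixed-point condition forces $\alpha(E[d])=C_d=\ker\alpha$, equivalently $E[d]\subseteq\ker(\alpha^2)$, equivalently $\operatorname{tr}(\alpha)\,\alpha\in d\cdot\operatorname{End}(E)$; taking traces gives $d\mid \operatorname{tr}(\alpha)^2$. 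Combined with $\operatorname{tr}(\alpha)^2<4d$ this leaves $\operatorname{tr}(\alpha)^2\in\{0,d,2d,3d\}$, and now cyclicity eliminates everything but $\operatorname{tr}(\alpha)=0$ once $d>3$; for $d=4$ the surviving candidate $\alpha=\pm 2i$ is itself non-cyclic, which is precisely why only cusps remain there. Note also that for $d=2$ the case $\operatorname{tr}(\alpha)^2=2d$ genuinely occurs ($\alpha=1\pm i$ at $j=1728$, whose endomorphism ring $\mathbb{Z}[i]$ does not contain $\mathbb{Z}[\sqrt{-2}]$), so the implication ``cyclic $\Rightarrow$ trace zero'' cannot be correct even at the level of conclusions. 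With the divisibility $d\mid\operatorname{tr}(\alpha)^2$ supplied from the level structure, the remainder of your outline goes through.
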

In particular, for prime level $N= p$,  the fixed point divisor for $\phi = w_p$ is simply $D_{p} + D_{4p}$, where we recall our convention that if $p$ is not the discriminant of a CM order $D_p = 0$.

The second type of Hecke  operator $T_{\ell}$ where $\ell \nmid N$, can be described via a square as follows, where the vertical arrows are both the forgetful map
$$
\begin{tikzcd}
    X_0(N\ell) \arrow{r}{w_\ell}\arrow{d} & X_0(N\ell)\arrow{d} \\
    X_0(N) & X_0(N)
\end{tikzcd}
$$
see \cite[Chapter 5]{diamond2005dimension} for details.
As a result of this, we see that in the sense of the previous section, $T_{\ell} = T_{\ell}^{\vee}$. Further, as is well known, the degrees for both are $\ell + 1$. This simplifies $\Sh(T_\ell)$ to $(2g - 2)F_{T_{\ell}} - \deg(F_{T_{\ell}})K_{X_0(N)} - 2 T_{\ell}(K_{X_{0}(N)}) + 2(\ell + 1)K_{X_0(N)}$.

\begin{proposition}
    The points in the support of $F_{T_{\ell}}$ are either:
    \begin{itemize}
        \item Cusps;
        \item CM elliptic curves, with endomorphism ring $\mathcal{O}$ containing an element of norm $\ell$, and an ideal, $I$ such that $\mathcal{O} / I$ is cyclic of order $N$.
    \end{itemize}
\end{proposition}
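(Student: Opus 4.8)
The plan is to give a moduli-theoretic description of the $T_\ell$-correspondence and read off its intersection with the diagonal, in direct analogy with Ogg's treatment of the Atkin--Lehner involutions quoted above. First I would use the square in the statement to describe $T_\ell$ on moduli: for $x=(E,C)\in X_0(N)$ with $C$ cyclic of order $N$ and $\ell\nmid N$, one has that $T_\ell x$ is the formal sum $\sum_D (E/D,\,(C+D)/D)$, the sum running over the $\ell+1$ cyclic subgroups $D\subset E$ of order $\ell$; since $\gcd(\ell,N)=1$, each such $D$ meets $C$ trivially, so $(C+D)/D$ is again cyclic of order $N$ and the summands are genuine points of $X_0(N)$. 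The graph $X_{T_\ell}\subset X_0(N)^2$ therefore meets the diagonal precisely at those $x$ lying in the support of $T_\ell x$. On the (finite) set of cusps, $T_\ell$ acts, and any cusp appearing in its own image lies in $\operatorname{supp}(F_{T_\ell})$; this gives the first alternative.

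Next I would analyze a non-cuspidal fixed point $x=(E,C)$. By the above there is a cyclic $D\subset E$ of order $\ell$ and an isomorphism $\psi\colon E/D\xrightarrow{\sim}E$ carrying $(C+D)/D$ onto $C$. Composing with the quotient isogeny $q_D\colon E\to E/D$ produces $\varphi:=\psi\circ q_D\in\operatorname{End}(E)$ with $\deg\varphi=\ell$ and $\varphi(C)=C$; conversely $D=\ker\varphi$ and $\psi$ is the induced isomorphism, so the two packages of data are interchangeable. Since $\ell$ is prime, $\varphi$ cannot be multiplication by an integer (that would force $\ell=n^2$), so $\operatorname{End}(E)\supsetneq\mathbb{Z}$. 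Working over a field of characteristic zero (which covers all cases needed for Theorems~\ref{THM1} and~\ref{THM2}), this forces $E$ to have complex multiplication: $\mathcal{O}:=\operatorname{End}(E)$ is an order in an imaginary quadratic field $K$, and under the identification $\deg=N_{K/\mathbb{Q}}$ on $\mathcal O$ the element $\varphi$ has norm $\ell$. In particular $\ell$ must be a norm from $\mathcal O$, i.e. split or ramified in $\mathcal O$ with a principal prime above it.

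Finally I would match the level structure with an ideal. Uniformizing $E=\mathbb{C}/\mathfrak a$ with $\mathfrak a$ a proper $\mathcal O$-module, $C=\mathfrak b/\mathfrak a$ for a lattice $\mathfrak a\subset\mathfrak b$ with $\mathfrak b/\mathfrak a$ cyclic of order $N$; the cyclic degree-$N$ isogeny $E\to E/C$ together with $\varphi(C)=C$ shows, prime by prime, that $\mathfrak b=I^{-1}\mathfrak a$ for an integral ideal $I$ with $\mathcal O/I$ cyclic of order $N$ — this is the standard dictionary between cyclic subgroups of CM elliptic curves and cyclic ideals. This places $x$ in the second alternative and completes the description.

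The hard part will be this last step, namely controlling the level structure. In the Atkin--Lehner case one has $\phi=\phi^\vee$ of degree $1$ and the support is read off directly from Ogg; here $\varphi$ has degree $\ell>1$, and one must track the interplay between $\varphi$, the cyclic subgroup $C$, and the CM order, and in particular decide in which order the ideal $I$ lives — $\operatorname{End}(E)$ or the possibly smaller $\operatorname{End}(E,C):=\{f\in\operatorname{End}(E):f(C)\subseteq C\}$ — when $N$ has prime factors that are inert in the maximal CM order. One also has to excise the supersingular locus, which is irrelevant for the curves at hand, and identifying exactly which cusps occur requires only the classical description of the action of $T_\ell$ on the cusps of $X_0(N)$.
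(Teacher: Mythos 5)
Your proposal follows essentially the same route as the paper: interpret $T_\ell$ moduli-theoretically, observe that a non-cuspidal fixed point $(E,C_N)$ yields an isomorphism $(E,C_N)\cong(E/C_\ell,(C_N+C_\ell)/C_\ell)$ and hence an endomorphism of $E$ of degree $\ell$, which forces CM with a norm-$\ell$ element, while the ideal condition encodes the compatible level structure. The paper's proof is considerably terser (it dispatches the cusps by counting the two cusps of $X_0(N\ell)$ over each cusp of $X_0(N)$ and states the ideal condition in one sentence), so your elaboration of the lattice/ideal dictionary is filling in detail the paper omits rather than diverging from it.
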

\begin{proof}
    The cusps are fixed as there are 2 cusps of $X_0(N \ell)$ lying above each cusp of $X_0(N)$, and the Atkin-Lehner operator $w_\ell$ interchanges them. 

    Now suppose $(E, C_N)$ is a fixed point of the Hecke operator, then there exists an order $\ell$ subgroup, $C_\ell$, such that $(E, C_N) \cong (E / C_\ell, (C_N + C_\ell)/C_\ell)$. In particular, $E$ has an endomorphism of order $\ell$. The last condition is imposing the existence of a suitable order $N$ subgroup.
\end{proof}

It remains to compute the multiplicities with which these occur. There are two reasons for this multiplicity to be greater than one, the first is that the graph of $X_0(N\ell)$ in $X_0(N)^2$ has a node at this point, the second is that the graph is tangent to the diagonal here. We first exclude the second possibility.

\begin{proposition}
    Suppose $P \in X_0(N\ell)$ has both projections to $X_0(N)$ equal, then the image of the tangent space of $P$ under the map from $X_0(N\ell)$ to $X_0(N)^2$ is not diagonal.
\end{proposition}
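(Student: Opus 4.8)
The plan is to analyze the correspondence $X_0(N\ell) \to X_0(N)^2$ locally at a fixed point $P = (E, C_N)$ and show that the derivative of the two projections cannot coincide. The point $P$ corresponds to a triple $(E, C_N, C_\ell)$ with $C_\ell$ cyclic of order $\ell$, and the two projections $\pi_1, \pi_2$ send $P$ to $(E, C_N)$ and $(E/C_\ell, (C_N + C_\ell)/C_\ell)$ respectively; the fixed-point hypothesis says these two points of $X_0(N)$ agree, so $E$ admits an isogeny $\alpha \colon E \to E$ of degree $\ell$ with kernel $C_\ell$ carrying $C_N$ to $C_N$ — i.e., $\alpha$ comes from a CM endomorphism of norm $\ell$. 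The claim is that $(d\pi_1)_P$ and $(d\pi_2)_P$ are different maps on the $1$-dimensional tangent space $T_P X_0(N\ell)$.

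First I would reduce to a Kodaira--Spencer type computation. Identifying the tangent space to a modular curve at a point $(E, \bullet)$ with $H^1(E, \mathcal{O}_E) \otimes (\text{something one-dimensional})$, or more concretely working with the universal deformation, the key point is that the forgetful map $X_0(N\ell) \to X_0(N)$ followed by projection to the $j$-line has derivative governed by how the elliptic curve $E$ itself deforms, which is unramified at non-cuspidal, non-elliptic points. The composite $\pi_2$ is the composite of the forgetful map $X_0(N\ell) \to X_0(N)$ with $w_\ell$; on tangent spaces, $w_\ell$ acts on $T X_0(N\ell)$ and then we project. The cleanest way to see the non-equality is: if $(d\pi_1)_P = (d\pi_2)_P$, then composing with the Kodaira--Spencer isomorphism on $X_0(N)$ at the common image point, the induced endomorphism $\alpha$ of $E$ (of degree $\ell$) would have to act on $H^1(E, \mathcal{O}_E) \cong \mathrm{Lie}(E^\vee)$ — equivalently on $H^0(E, \Omega^1_E)$ — as the identity (up to the scalar coming from the two maps agreeing). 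But an endomorphism of $E$ acting as the identity on $H^0(E, \Omega^1_E)$ is the identity endomorphism, which has degree $1 \ne \ell$. This contradiction is the heart of the argument.

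More carefully, I would set it up as follows. Let $(\mathcal{E}, \ldots)$ be the universal object over a formal/analytic neighbourhood of $P$ in $X_0(N\ell)$. The Kodaira--Spencer map identifies $T_P X_0(N\ell)$ with $H^0(E, \Omega^1_E)^{\otimes 2}$ (since $X_0(N\ell)$ is, away from cusps and elliptic points, étale over the $j$-line whose Kodaira--Spencer is this line). Under $\pi_1$ the deformation is just the induced deformation of $(E, C_N)$, and under $\pi_2$ it is the deformation of $(E/C_\ell, \ldots)$; the isogeny $\alpha = \hat\beta \circ (\text{quotient})$ relating them induces on Kodaira--Spencer spaces multiplication by $\alpha^* $ acting on $H^0(\Omega^1)^{\otimes 2}$, i.e., by $\lambda^2$ where $\lambda = \alpha^*$ is the scalar by which $\alpha$ acts on the $1$-dimensional space $H^0(E, \Omega^1_E)$. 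So $(d\pi_1)_P$ and $(d\pi_2)_P$ agree only if $\lambda^2 = 1$; since $\lambda \bar\lambda = \deg \alpha = \ell$ and $\lambda$ is an algebraic integer in the CM field with $|\lambda|^2 = \ell$, we cannot have $\lambda^2 = 1$ (that would force $\ell = |\lambda|^2 = |\lambda^2| = 1$). Hence the two derivatives differ, so the image of $T_P$ in $T_{\pi_1(P)}X_0(N) \oplus T_{\pi_2(P)} X_0(N)$ is not contained in the diagonal.

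The main obstacle I anticipate is making the Kodaira--Spencer bookkeeping precise at a point that is possibly an elliptic point of $X_0(N\ell)$ or lies over an elliptic point — there one must work with the coarse space carefully (or pass to a rigidified cover / use the stack) so that the statement "$X_0(N\ell)$ is étale over the $j$-line near $P$" is corrected by the appropriate ramification, and check that this ramification is the \emph{same} for both projections so it does not affect the comparison of derivatives. One should also double-check the edge cases where $E$ has extra automorphisms ($j = 0, 1728$), though by the previous propositions these correspond to the $D_3, D_4$ contributions and can be handled by noting the automorphism acts compatibly on both sides. Once the Kodaira--Spencer identification is set up correctly, the degree argument ($\lambda \bar\lambda = \ell > 1 \Rightarrow \lambda^2 \ne 1$) closes it immediately.
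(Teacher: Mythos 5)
Your argument at CM points is a genuinely different route from the paper's. You work locally: the fixed point produces a CM endomorphism $\alpha$ with $\alpha^{*}$ acting on $H^{0}(E,\Omega^{1}_{E})$ by a scalar $\lambda$ of norm $\ell$, and you compare the two derivatives via Kodaira--Spencer. The paper instead argues globally that $j - j\circ w_{\ell}$ has only simple zeros on $X_{0}(\ell)$: for $\ell = 2,3$ by counting zeros against the degree of the function, and for $\ell \ge 5$ by reducing modulo $\ell$ and counting pairs of supersingular points on the two components of the special fibre. Your approach is more conceptual and uniform in $\ell$; the paper's avoids any deformation-theoretic bookkeeping at the cost of a case split. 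One correction to your computation: the derivative of the Hecke correspondence at a CM fixed point is more naturally $\bar{\lambda}/\lambda = \ell/\lambda^{2}$ (the derivative of the fractional-linear transformation $\gamma$ at its fixed point $\tau_{0}$ is $\det\gamma/(c\tau_{0}+d)^{2}$), not $\lambda^{2}$. This does not hurt you --- $\bar{\lambda}/\lambda = 1$ would force $\lambda$ real with $\lambda^{2} = \ell$, impossible for a non-real algebraic integer in an imaginary quadratic order --- but the contradiction you should be deriving is ``$\lambda$ real,'' not ``$\ell = 1$.''

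There is, however, a genuine gap: you never treat the cusps. Every cusp of $X_{0}(N\ell)$ satisfies the hypothesis of the proposition (the two cusps of $X_{0}(N\ell)$ above a given cusp of $X_{0}(N)$ are swapped by $w_{\ell}$, so both projections agree there), and the cuspidal contribution enters the fixed-point divisor $F_{T_{\ell}}$ with multiplicity $2$, so the transversality statement is actually needed at cusps. Your proof begins by writing $P = (E, C_{N}, C_{\ell})$, which silently excludes them. The paper disposes of this case in two sentences: at a cusp, one projection is unramified and the other is totally ramified of degree $\ell$ (or vice versa), so the image of the tangent line lies along one of the two coordinate fibrations of $X_{0}(N)^{2}$ and in particular is not the diagonal. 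You should add this case. The elliptic-point subtlety you flag at $j = 0, 1728$ is real but is shared by the paper's argument (which also uses $j$ as a local parameter), and your proposed fix via the rigidified cover is the right one.
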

\begin{proof}
    We start with the case that $P$ is a cusp. In this case, the local parameter at $P$ is either the same as for its image in $X_0(N)$, or its $\ell$-th root, and the opposite is true for $w_\ell(P)$. In either case, the projection of its tangent space to $X_0(N)^2$ is along one of the natural fibrations.

    Otherwise, $P$ corresponds to an elliptic curve. By the previous proposition, this elliptic curve has CM. Since the $j$-invariant is a local parameter for CM points on $X_0(N)$, it is enough to check that $j - j \circ \omega_\ell$ has simple zeros at CM points on $X_0(N\ell)$. As this function is a pullback from $X_0(\ell)$, we start by proving this for $X_0(\ell)$. For $\ell = 2$, $j - j \circ \omega_\ell$ has two poles of order 2, and so is a degree 4 function. There are 3 CM orders with an element of norm 2, $\mathbb{Z}[\sqrt{-1}]$, $\mathbb{Z}[\sqrt{-2}]$ and $\mathbb{Z}[\frac{-1 + \sqrt{-7}}{2}]$. The first two of these have a unique ideal of index 2, and the second has 2 choices, giving rise to a total of 4 points on $X_0(2)$ where this function vanishes. This forces all 4 points to be simple zeros. An identical counting argument applies for $\ell = 3$.

    For $\ell \geq 5$, we can reduce modulo $\ell$. On the regular model of $X_0(p)$ modulo $\ell$, the Atkin-Lehner involution acts by swapping the Frobenius and Verschiebung components, and maps a $j$-invariant on the Frobenius component to $\Fr(j)$, and similarly for the Verschiebung component. 

    In particular, the function $j - j \circ \omega_\ell$ vanishes on the points of each component with $\mathbb{F}_\ell$-rational $j$-invariant. As there are $l$ such $j$-invariants, there would be at least $2\ell$ zeroes if each appears on a separate component. However, there are rational supersingular $j$-invariants, which correspond to complex multiplication by $\mathbb{Z}[\sqrt{-p}]$ and $\mathbb{Z}\left[\frac{-1 + \sqrt{-p}}{2}\right]$. These CM curves reduce in pairs to the rational supersingular $j$-invariants, and so $j - j \circ \omega_\ell$ has all its zeroes in pairs, and one pair for each $j$-invariant over $\mathbb{F}_\ell$. In particular, it has at least $2\ell$ distinct zeroes, and has degree $2\ell$, and so all zeroes must be simple. 
\end{proof}

We now handle the other source of higher multiplicity, where multiple distinct points of $X_0(N\ell)$ map to the same point in $X_0(N)^2$.

\begin{proposition}
    The multiplicity of cusps in $F_{T_\ell}$ are 2, and for an elliptic curve with CM by $\mathcal{O}$, it is the number of prime ideals above $\ell$ in $\mathcal{O}$.
\end{proposition}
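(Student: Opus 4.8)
The plan is to compute $F_{T_\ell}$ as the intersection of the graph of $T_\ell$ with the diagonal, and then to count fibres. The correspondence $T_\ell$ is realised by the map $(\pi_1,\pi_2)\colon X_0(N\ell)\to X_0(N)^2$ (with $\pi_1$ the forgetful map and $\pi_2=\pi_1\circ w_\ell$), which is birational onto its image $X_{T_\ell}$; and by the previous proposition, at every point $Q$ of $X_0(N\ell)$ over the diagonal the image of the tangent space is a line distinct from the diagonal, so $(\pi_1,\pi_2)$ is an immersion at $Q$ and the corresponding branch of $X_{T_\ell}$ meets $\Delta$ transversally. Hence the multiplicity of a point $P$ in $F_{T_\ell}$ is exactly the number of points of $X_0(N\ell)$ lying over $(P,P)$, and it remains to count this fibre for a cusp and for a CM point.

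For a cusp $P$, the count follows from the description of cusps already used above: the two cusps of $X_0(N\ell)$ over $P$ are interchanged by $w_\ell$ and have a common image under $\pi_1$, so both map to $(P,P)$ under $(\pi_1,\pi_2)$, giving multiplicity $2$.

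For a CM point $P=(E,C_N)$, I would argue via the lattice/ideal dictionary. By the characterisation of $F_{T_\ell}$ recalled above, $E=\mathbb{C}/\mathfrak{a}$ has CM by an order $\mathcal{O}=\mathrm{End}(E)$ containing an element of norm $\ell$ (which forces $\ell\nmid\mathrm{cond}(\mathcal{O})$ and $\ell$ non-inert in $\mathcal{O}$), and $C_N=E[I]$ for an ideal $I\subset\mathcal{O}$ with $\mathcal{O}/I$ cyclic of order $N$. A point of $X_0(N\ell)$ over $(P,P)$ is, up to isomorphism, $(E,C_N+D)$ for an order-$\ell$ subgroup $D\subset E$ with $(E/D,\overline{C_N})\cong (E,C_N)$, two such being identified by $\mathrm{Aut}(E,C_N)\subset\mathcal{O}^\times$. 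An isomorphism $E/D=\mathbb{C}/\Lambda\cong\mathbb{C}/\mathfrak{a}=E$ forces $\Lambda$ to be homothetic to $\mathfrak{a}$; since $[\Lambda:\mathfrak{a}]=\ell$, this is equivalent to $D=E[\mathfrak{l}]$ for a \emph{principal} prime $\mathfrak{l}\subset\mathcal{O}$ of norm $\ell$, and conversely each principal prime $\mathfrak{l}\mid\ell$ yields such a $D$. The hypothesis that $\mathcal{O}$ contains an element of norm $\ell$ makes \emph{every} prime of $\mathcal{O}$ above $\ell$ principal (a norm-$\ell$ element generates one, whose conjugate is then principal in the split case; in the ramified case there is only one such prime), and since $\ell\nmid\mathrm{cond}(\mathcal{O})$ these coincide with the primes above $\ell$. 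Finally one checks that the homothety can be chosen $\mathcal{O}$-linearly, so it carries $\overline{C_N}=(E/D)[I]$ back to $E[I]=C_N$ and thus respects the level structure; and that $\mathrm{Aut}(E,C_N)$ stabilises each ideal $\mathfrak{l}$, so acts trivially on the relevant $D$'s. Hence the fibre over $(P,P)$ is in bijection with the set of primes of $\mathcal{O}$ above $\ell$, giving the claimed multiplicity.

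The step I expect to be most delicate is this last CM computation: one must verify that the ``norm-$\ell$ element'' hypothesis genuinely excludes the degenerate possibilities ($\ell$ dividing the conductor, or a non-principal prime above $\ell$), and that the extra automorphisms at $j=0,1728$ do not make the count of subgroups $D$ differ from the count of points of $X_0(N\ell)$ over $(P,P)$. The reduction to a fibre count and the cusp case are routine; the care lies in keeping the passage between the moduli fibre, homothety classes of lattices, and the class group of $\mathcal{O}$ honest, in particular tracking which isomorphisms preserve $E[I]$.
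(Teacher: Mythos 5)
Your proposal is correct and follows essentially the same route as the paper: reduce the intersection multiplicity to a fibre count over the diagonal (using the preceding transversality proposition), count the two cusps of $X_0(N\ell)$ over each cusp, and identify the order-$\ell$ subgroups $D$ with $E/D\cong E$ with the (automatically principal) primes of $\mathcal{O}$ above $\ell$. You spell out the lattice/ideal dictionary and the automorphism and level-structure checks that the paper leaves implicit, but the argument is the same.
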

\begin{proof}
    The multiplicity is determined by number of points of $X_0(N\ell)$ above each point of the diagonal in $X_0(N)^2$.
    
    For each cusp of $X_0(N)$, there are exactly 2 cusps of $X_0(N\ell)$ above it, and both map to the same point in $X_0(N)^2$, namely the diagonal embedding of the cusp.

    For a point of $X_0(N\ell)$ corresponding to a CM elliptic curve, the points above it of interest, are the ones where the $\ell$-structure comes from the CM structure, and so the $\ell$-structure comes from a choice of ideal of index $\ell$. Any such choice is appropriate, as if one ideal is principal, so is the other.
\end{proof}

Combining the above in the special case of $T_2$, gives the following 
\begin{center}
$F_{T_2} = D_4 + D_8 + 2D_7 + 2D_{\infty}$
\end{center}
where $D_{\infty}$ is the cuspidal divisor (all with multiplicity $1$) and we highlight the fact that if no such CM point exists, $D_d$ is assumed to be zero.

\section{Proof of Theorem 1}
\label{proofof1}
Throughout this section we write $p \ge 5$ for a prime number, $X_{0}(p)$ for  the complete modular curve associated to the congruence subgroup $\Gamma_{0}(p)$ and $X = X_{0}^{+}(p)$ for the quotient curve obtained from the unique Atkin-Lehner involution $w_{p}$. We also write $w_{p}$ for the induced involution on the Jacobian $J_{0}(p)$ of $X_{0}(p)$ and $J = J_{0}^{+}(p)$ for the quotient variety. In this case we  are able to identify $J$ with the Jacobian of $X$ since the map quotient map $ \pi : X_{0}(p) \rightarrow X $ is ramified. 

The proof of Theorem \ref{THM1} reduces to proving that the scaled shadow of some carefully chosen Hecke operator defines a point of infinite order in the Jacobian. We consider various points, principally $\Sh(T_2)$, but also $\Sh(T_3)$ and $\Sh(w_p)$ depending on the congruence class of $p$. We denote this point by $D$ and note the following trivial corollary of Theorems \ref{M1} and \ref{M2}. 
\begin{corollary}
 Suppose $[\pi( D )] \in J(
 \Q)$ is non-zero. Then $[D] \in J_{0}(p)( \Q )$ is of infinite order.     
\end{corollary}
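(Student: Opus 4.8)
The plan is to prove the contrapositive: if $[D] \in J_{0}(p)(\Q)$ is torsion, then $[\pi(D)] = 0$ in $J(\Q)$. This suffices, since $J_{0}(p)(\Q)$ is finitely generated by the Mordell--Weil theorem, so any element which is not torsion has infinite order; and if $[D] = 0$ then certainly $[\pi(D)] = 0$, so the hypothesis $[\pi(D)] \neq 0$ rules out the trivial class as well.

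First I would note that $D$ genuinely defines a class in $J_{0}(p)(\Q)$: it is the shadow of a Hecke operator, built from the (rational) fixed-point divisor of a correspondence defined over $\Q$ together with the canonical class, hence defined over $\Q$, as remarked after the definition of the shadow. Now suppose $[D]$ is torsion. By Theorem \ref{M1} we may write $[D] = n[c_{0} - c_{\infty}]$ for some $n \in \mathbb{Z}$. Pushing forward along $\pi$ gives $[\pi(D)] = \pi_{*}[D] = n\,\pi_{*}[c_{0} - c_{\infty}]$. Since $[c_{0} - c_{\infty}]$ is torsion (Ogg's theorem, or again Theorem \ref{M1}), the point $\pi_{*}[c_{0} - c_{\infty}]$ lies in $J(\Q)_{\tors}$; for the primes under consideration $X = X_{0}^{+}(p)$ has positive genus, so Theorem \ref{M2} gives $J(\Q)_{\tors} = 0$, whence $\pi_{*}[c_{0} - c_{\infty}] = 0$ and therefore $[\pi(D)] = 0$, contradicting the hypothesis. (Alternatively, one could observe directly that $w_{p}$ interchanges the two cusps, so $\pi(c_{0}) = \pi(c_{\infty})$ and $\pi_{*}(c_{0} - c_{\infty}) = 0$ already at the level of divisors, which bypasses Theorem \ref{M2}; but the argument above is the ``trivial corollary of Theorems \ref{M1} and \ref{M2}'' referred to in the statement.)

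There is no real obstacle: the corollary is a formal consequence of Mazur's two structure results together with functoriality of the pushforward $\pi_{*} \colon J_{0}(p) \to J$ (using the identification of $J$ with the Jacobian of $X$). The only mild points of care are the reduction step --- that $[D]$ torsion forces $[\pi(D)]$ torsion, hence zero by Theorem \ref{M2} --- and the observation that $D$ is a rational class. All of the actual difficulty of Theorem \ref{THM1} is deferred to the later task of showing $[\pi(D)] \neq 0$, for which the descriptions of the reductions of $X_{0}(p)$ and $X_{0}^{+}(p)$ modulo $p$ recorded in Section \ref{modularcurves} will be the essential input.
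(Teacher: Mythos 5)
Your argument is correct and is exactly the ``trivial corollary of Theorems \ref{M1} and \ref{M2}'' the paper has in mind (the paper supplies no written proof): since $J_{0}(p)(\Q)$ is finitely generated, non-torsion means infinite order, and a torsion $[D]$ pushes forward to a torsion class in $J(\Q)$, which vanishes by Theorem \ref{M2} (or already at the level of divisors, since $\pi$ identifies the two cusps). Your observation that Theorem \ref{M1} is strictly speaking not needed once one invokes Theorem \ref{M2} is accurate, and the positive-genus caveat you flag is satisfied for all primes relevant to Theorem \ref{THM1}.
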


\subsection{Reduction Arguments} For most congruence classes we map the shadow point to $J$ and reduce modulo $p$ to conclude that it is non-zero. In particular, we use the following results. 
\begin{proposition}
There exists a homomorphism
\begin{center}
    $\mathrm{red} : J(\Q_{p}) \longrightarrow \Pic^{0}(\mathcal{X}_{/\mathbb{F}_{p}})$
\end{center}
where $ \mathcal{X}_{/\mathbb{F}_{p}}$ is the special fibre of the minimal regular model of $X_{/ \mathbb{\Q}_{p}}$. 
\label{prop:redmap}
\end{proposition}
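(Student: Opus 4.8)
The plan is to produce the reduction map as the composition of two standard arrows, using the fact that the minimal regular model of $X = X_0^+(p)$ over $\mathbb{Z}_p$ has the special fibre described in Theorem~\ref{minregmod}, so in particular it is a (proper, flat) regular model with geometrically reduced special fibre. First I would take the N\'eron model $\mathcal{J}^0$ of $J = J_0^+(p)$ over $\mathbb{Z}_p$; by the N\'eron mapping property, the canonical map $J(\Q_p) = \mathcal{J}(\Z_p) \to \mathcal{J}_{\mathbb{F}_p}(\mathbb{F}_p)$ gives a homomorphism to the $\mathbb{F}_p$-points of the special fibre of the N\'eron model. Second I would identify (or at least map into) this with $\Pic^0$ of the special fibre $\mathcal{X}_{/\mathbb{F}_p}$.

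The cleanest way to get the second arrow is to invoke Raynaud's theorem (see BLR, \emph{N\'eron Models}, Ch.~9, Thm.~9.5/1 and Thm.~9.7/1): for a proper flat regular model $\mathcal{X}/\Z_p$ with $X/\Q_p$ smooth, geometrically connected, and with rational (in fact we have explicit) components, the identity component of the N\'eron model of $\Pic^0_{X/\Q_p} = J$ is canonically $\Pic^0_{\mathcal{X}/\Z_p}$, the identity component of the relative Picard functor. Restricting to $\Z_p$- and then $\mathbb{F}_p$-points and composing with $\Pic^0_{\mathcal{X}/\Z_p}(\mathbb{F}_p) \to \Pic^0(\mathcal{X}_{/\mathbb{F}_p})$ yields the desired $\mathrm{red}$. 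Concretely, a class in $J(\Q_p)$ is represented by a divisor on $X_{/\Q_p}$; spread it out to a horizontal divisor on $\mathcal{X}$, take its Zariski closure, and restrict to the special fibre — one must check the closure meets the special fibre in the smooth locus (or adjust by components of the fibre, which is where regularity of $\mathcal{X}$ and the explicit fibre of Theorem~\ref{minregmod} are used) so that the restricted divisor has a well-defined class in $\Pic^0(\mathcal{X}_{/\mathbb{F}_p})$, and that linearly equivalent divisors restrict to linearly equivalent divisors, which follows since a rational function on $X_{/\Q_p}$ extends to a rational function on $\mathcal{X}$ whose divisor restricted to the special fibre is principal up to fibral components.

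I would then simply remark that this is functorial: the same construction applies to $X_0(p)$ itself with its known special fibre (two $\mathbb{P}^1$'s crossing at supersingular points), and the quotient map $\pi$ is compatible with reduction, which is what the proof of Theorem~\ref{THM1} actually needs. The main obstacle — really the only nontrivial point — is verifying the hypotheses of Raynaud's theorem are met, i.e.\ that $\mathcal{X}_{/\Z_p}$ is regular with geometrically reduced special fibre whose irreducible components are all rational and that the gcd of the geometric multiplicities of these components is $1$; all of this is guaranteed by Xue's description in Theorem~\ref{minregmod} (a rational curve with nodes, multiplicity one). Given that, the existence of $\mathrm{red}$ is formal, so in the paper I would state it with a one-line justification pointing to BLR Ch.~9 and Theorem~\ref{minregmod}, rather than reproving Raynaud's theorem.
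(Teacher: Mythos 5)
Your argument is essentially the paper's: pass to the N\'eron model $\mathcal{J}$ of $J$, reduce $\mathbb{Z}_p$-points to $\mathbb{F}_p$-points, and identify the identity component $\mathcal{J}^0$ with $\Pic^0$ of the special fibre via BLR Ch.~9 (Raynaud), exactly as in the paper's proof. The one step you should make explicit is that the component group $\mathcal{J}/\mathcal{J}^0$ is trivial---which follows from the special fibre in Theorem~\ref{minregmod} being irreducible of multiplicity one---since otherwise the reduction of an arbitrary point of $J(\Q_p)$ need not land in $\mathcal{J}^0(\mathbb{F}_p)$ and $\mathrm{red}$ would only be defined on a subgroup; your gcd-of-multiplicities remark justifies the identification $\mathcal{J}^0 \cong \Pic^0_{\mathcal{X}/\mathbb{Z}_p}$ but not this point.
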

\begin{proof}
 Let $\mathcal{J}/\mathbb{Z}_{p}$ be the N\'eron model of $J_{/ \mathbb{Q}_{p}}$ and write  $\mathcal{J}_{/ \mathbb{F}_{p}}$ for its special fibre. By the N\'eron mapping property 
\begin{center}
    $J(\Q_{p}) \cong \mathcal{J}( \mathbb{Z}_{p})$.
\end{center}
We write $\mathcal{J}^{0}$ for the connected component of the identity and $\Phi$ for the component group. By definition
\begin{center}
 $\mathcal{J}/\mathcal{J}^{0}\cong \Phi$,
\end{center}
and since $J$ is the Jacobian of a curve, we can determine the component group from the special fibre of the minimal regular model of the curve, see \cite[Section 9.6, Theorem 1]{bosch2012neron}. As the special fibre described in Theorem \ref{minregmod} has a single component we deduce that $\Phi$ is trivial, and hence $\mathcal{J}(\mathbb{F}_{p}) = \mathcal{J}^{0}(\mathbb{F}_{p}) $. Moreover, by \cite[Section 9.5, Theorem 1]{bosch2012neron}, $\mathcal{J}^{0}(\mathbb{F}_{p}) \cong \Pic^{0}(\mathcal{X}_{/ \mathbb{F}_{p}})$ and $\mathrm{red}$ is simply the composition 
\begin{center}
    $J(\Q_{p}) \xrightarrow{\sim} \mathcal{J}(\mathbb{Z}_{p}) \rightarrow \mathcal{J}(\mathbb{F}_{p}) = \mathcal{J}^{0}(\mathbb{F}_{p})\xrightarrow{\sim} \Pic^{0}(\mathcal{X}_{/ \mathbb{F}_{p}})$.
\end{center}
\end{proof}
We now give a criterion for a divisor to be non-vanishing in $\Pic^0(\mathcal{X} / \mathbb{F}_p)$.
\begin{proposition}
    Let $D = \sum_i n_i a_i$ be a divisor on $\mathcal{X} / \mathbb{F}_p$, and define $f(x) = \prod_i (x - a_i)^{n_i}$, by identifying the special fibre of $X$ with a $\mathbb{P}^1$. If $j \in \mathbb{F}_{p^2} \setminus \mathbb{F}_p$ is a supersingular $j$-invariant such that $f(j) \notin \mathbb{F}_p$, then $D \neq 0$. 
    \label{prop:triv}
\end{proposition}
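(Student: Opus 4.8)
The plan is to exploit the explicit description of the special fibre $\mathcal{X}/\mathbb{F}_p$ from Theorem~\ref{minregmod}, which is a rational curve (so its normalisation is $\mathbb{P}^1$) with $\frac{1}{2}S_{p^2}$ nodes, where the nodes are precisely the pairs of points whose $j$-invariant is a supersingular $j$-invariant lying in $\mathbb{F}_{p^2}\setminus\mathbb{F}_p$, glued to each other via the Frobenius/Verschiebung identification. A degree-zero divisor $D$ on such a nodal rational curve is principal if and only if it is the divisor of a rational function $f$ on $\mathbb{P}^1$ (i.e. a ratio of polynomials), subject to the gluing compatibility: for each node obtained by identifying points $P$ and $P'=\Fr(P)$, one must have $f(P)=f(P')$ in $\mathbb{F}_{p^2}$, where the relevant normalisation of $f$ is the one with divisor $D$. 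So the strategy is: assume for contradiction that $[D]=0$ in $\Pic^0(\mathcal{X}/\mathbb{F}_p)$, deduce that the function $f(x)=\prod_i(x-a_i)^{n_i}$ must descend through the node, and extract the required equality of values.

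First I would recall (citing \cite{bosch2012neron}, Section~9.2, or the standard description of $\Pic^0$ of a singular curve) that for a rational curve $\mathcal{X}$ with ordinary double points obtained by gluing points of $\mathbb{P}^1$ in pairs, there is an exact sequence $1 \to \prod_{\text{nodes}} \mathbb{G}_m \to \Pic(\mathcal{X}) \to \Pic(\mathbb{P}^1) \to 1$ after pulling back along the normalisation $\nu:\mathbb{P}^1 \to \mathcal{X}$, so that $\Pic^0(\mathcal{X})$ is a torus. Concretely, a divisor $D$ supported on smooth points with $\nu^*D$ of degree $0$ represents $0$ in $\Pic^0(\mathcal{X})$ exactly when the rational function $f$ on $\mathbb{P}^1$ with $\operatorname{div}(f)=\nu^*D$ takes equal values at the two preimages of each node. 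Since $X_0^+(p)$ and its model are defined over $\mathbb{Q}$ (resp.\ $\mathbb{Z}_p$), and the model of $X_0(p)$ modulo $p$ has the two components swapped by $w_p$, the node coming from a supersingular $j$-invariant $j\in\mathbb{F}_{p^2}\setminus\mathbb{F}_p$ glues the point "$j$" to the point "$\Fr(j)=j^p$"; thus the gluing condition at that node reads $f(j)=f(j^p)$.

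The key step is then purely arithmetic: if $j\in\mathbb{F}_{p^2}\setminus\mathbb{F}_p$ is a supersingular $j$-invariant and $D=\sum_i n_i a_i$ is a divisor defined over $\mathbb{F}_p$ (the $a_i$ are in $\mathbb{F}_p$, or come in Galois-conjugate pairs with equal $n_i$, as is the case for all our shadow divisors since $D$ is rational), then $f(x)=\prod_i(x-a_i)^{n_i}$ has coefficients in $\mathbb{F}_p$, so $f(j^p)=f(j)^p$. Hence the gluing condition $f(j)=f(j^p)$ becomes $f(j)=f(j)^p$, i.e.\ $f(j)\in\mathbb{F}_p$. Contrapositively, if $f(j)\notin\mathbb{F}_p$ then the gluing condition fails at the node attached to $j$, so no such $f$ exists and $[D]\neq 0$ in $\Pic^0(\mathcal{X}/\mathbb{F}_p)$. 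One caveat to address: $D$ must be disjoint from the nodes for this clean formulation — but the shadow divisors we apply this to are supported on cusps and on CM points with small discriminant, which on the special fibre are the $\mathbb{F}_p$-rational points, disjoint from the supersingular nodes with non-rational $j$; if any $a_i$ happened to coincide with a node one would first move $D$ by a principal divisor, so this is not a genuine obstruction.

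The main obstacle, and the only subtle point, is justifying the precise form of the gluing/descent criterion for $\Pic^0$ of the nodal model — in particular being careful that the "value of $f$ at a node" is well-defined only after fixing the normalisation of $f$ determined by $D$ (i.e.\ $f$ is unique up to a global scalar, which cancels in the ratio $f(j)/f(j^p)$, so the condition $f(j)=f(j^p)$ is in fact scalar-independent and hence well-posed). Once that is pinned down via \cite[Section~9.2]{bosch2012neron}, the rest is the one-line Frobenius computation above. I would therefore structure the write-up as: (i) recall the exact sequence describing $\Pic^0$ of the nodal rational curve and the resulting criterion; (ii) observe scalar-independence so the condition $f(j)=f(j^p)$ is well-defined; (iii) apply $f\in\mathbb{F}_p[x]$ to get $f(j^p)=f(j)^p$ and conclude.
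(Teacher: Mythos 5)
Your proposal is correct and follows essentially the same route as the paper: the paper's proof is exactly the observation that $j$ is glued to $\overline{j}$ in the special fibre and that $f(j)\notin\mathbb{F}_p$ forces $f(j)\neq\overline{f(j)}=f(\overline{j})$, so $f$ does not descend and cannot trivialise $D$. Your write-up simply makes explicit the background the paper leaves implicit (the torus description of $\Pic^0$ of the nodal rational curve, the scalar-independence of the gluing condition, and the support being disjoint from the nodes), which is a reasonable amount of extra care but not a different argument.
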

\begin{proof}
    As $j$ is a supersingular $j$-invariant, in $\mathcal{X} / \mathbb{F}_p$ it is glued to $\bar{j}$, where $\overline{}$ denotes the action of Frobenius. Since $f(j) \notin \mathbb{F}_p$, then $f(j) \neq \overline{f(j)} = f\left(\overline{j}\right)$ . In particular, $f$ is not a well-defined function on $\mathcal{X} / \mathbb{F}_p$, and so $D$ is not trivialised by it.  
\end{proof}

The next proposition describes one way of applying the previous proposition in characteristic 0. 

\begin{proposition}
    Let $D = \sum_i n_i P_i$ be a rational, degree 0 divisor on $X_0(p)$. Take a CM order, $\mathcal{O}$ such that $p$ is inert in both $\mathcal{O}$ and the Hilbert class field of $\mathcal{O}$. Define $\alpha = \prod_i (j_{\mathcal{O}} - j(P_i))^{n_i}$, where $j_{\mathcal{O}}$ is a $j$-invariant attached to $\mathcal{O}$. If there does not exist an $a \in \mathbb{Z}$ such that the $\mathfrak{p}$-adic valuation of $\alpha - a$ is strictly positive for some $\mathfrak{p}$ lying over $p$, $\pi(D)$ is non-zero in $J$.
    \label{prop:CMred}
\end{proposition}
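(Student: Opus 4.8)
The plan is to prove the slightly stronger statement that the image of $[\pi(D)]=[\pi_{*}D]$ under the reduction map of Proposition~\ref{prop:redmap} is already non-zero in $\Pic^{0}(\mathcal{X}_{/\mathbb{F}_{p}})$, and to detect this by Proposition~\ref{prop:triv}. Since $\mathrm{red}$ is a group homomorphism and $[\pi_{*}D]\in J(\Q)\subseteq J(\Q_{p})$, it suffices to show $\mathrm{red}([\pi_{*}D])\neq 0$; by Proposition~\ref{prop:triv} this follows once we exhibit a supersingular $j$-invariant $j_{0}\in\mathbb{F}_{p^{2}}\setminus\mathbb{F}_{p}$ at which the rational function cutting out the reduction of $\pi_{*}D$ takes a value outside $\mathbb{F}_{p}$.

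First I would identify that reduction. Using the Deligne--Rapoport model of $X_{0}(p)_{/\mathbb{Z}_{p}}$ together with Xue's description (Theorem~\ref{minregmod}), $\mathcal{X}_{/\mathbb{F}_{p}}$ is a nodal $\mathbb{P}^{1}$ whose normalisation carries the $j$-invariant as a coordinate and whose nodes are exactly the Frobenius-conjugate pairs $\{j_{0},j_{0}^{p}\}$ of supersingular $j$-invariants with $j_{0}\notin\mathbb{F}_{p}$ — this is the coordinate used in Proposition~\ref{prop:triv}. The $j$-line map extends over $\mathbb{Z}_{p}$, so each $\pi(P_{i})$ specialises to the point of $\mathcal{X}_{/\mathbb{F}_{p}}$ with $j$-coordinate $\overline{j(P_{i})}$, the reduction of $j(P_{i})$ — up to a Frobenius twist coming from the inseparability of $w_{p}$ on the special fibre; since the multiset $\{(P_{i},n_{i})\}$ is $\Gal(\overline{\Q}_{p}/\Q_{p})$-stable, so is its reduction, and these twists do not change the divisor class. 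Hence $\mathrm{red}([\pi_{*}D])$ is the class of $\overline{D}:=\sum_{i}n_{i}[\overline{j(P_{i})}]$. Because $D$ is $\Q$-rational, the polynomial $f(x):=\prod_{i}(x-j(P_{i}))^{n_{i}}$ lies in $\Q(x)$ and is a quotient of monic polynomials in $\mathbb{Z}[x]$ (the $j(P_{i})$ being algebraic integers), so its reduction $\overline{f}\in\mathbb{F}_{p}(x)$ is a well-defined rational function with divisor $\overline{D}$ on $\mathbb{P}^{1}_{\overline{\mathbb{F}}_{p}}$.

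Now set $j_{0}:=\overline{j_{\mathcal{O}}}$, the reduction of $j_{\mathcal{O}}$ at the prime $\mathfrak{p}\mid p$ induced by the fixed embedding $\overline{\Q}\hookrightarrow\overline{\Q}_{p}$: $p$ inert in the Hilbert class field of $\mathcal{O}$ guarantees a unique such prime, so $j_{0}$ is unambiguous, and $p$ inert in $\mathcal{O}$ forces supersingular reduction (Deuring), so $j_{0}$ is a supersingular $j$-invariant, a priori in $\mathbb{F}_{p^{2}}$. Then $\overline{f}(j_{0})=\overline{f(j_{\mathcal{O}})}=\alpha\bmod\mathfrak{p}$. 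The hypothesis — that no $a\in\mathbb{Z}$ has $v_{\mathfrak{p}}(\alpha-a)>0$ for any $\mathfrak{p}\mid p$ — says exactly that $\alpha\bmod\mathfrak{p}\notin\mathbb{F}_{p}$, so $\overline{f}(j_{0})\notin\mathbb{F}_{p}$. Since $\overline{f}\in\mathbb{F}_{p}(x)$ sends $\mathbb{F}_{p}$ into $\mathbb{P}^{1}(\mathbb{F}_{p})$, this forces $j_{0}\notin\mathbb{F}_{p}$, so $j_{0}\in\mathbb{F}_{p^{2}}\setminus\mathbb{F}_{p}$ is one of the node points of $\mathcal{X}_{/\mathbb{F}_{p}}$. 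Proposition~\ref{prop:triv} applied with this $j_{0}$ gives $\overline{D}\neq 0$ in $\Pic^{0}(\mathcal{X}_{/\mathbb{F}_{p}})$, hence $\mathrm{red}([\pi_{*}D])\neq 0$, and therefore $\pi(D)$ is non-zero in $J$.

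The main obstacle is the second step: pinning down $\mathrm{red}([\pi_{*}D])$ as the class of $\sum_{i}n_{i}[\overline{j(P_{i})}]$ on the nodal $\mathbb{P}^{1}$. This requires the explicit integral models and attention to three points — the contribution of cusps (disposed of by Manin--Drinfeld, since those classes are torsion), the inseparability of $w_{p}$ on the special fibre, and any $P_{i}$ whose $j$-invariant reduces onto a node, in which case $\pi_{*}D$ must first be moved within its linear equivalence class; in the situations where the proposition is applied (shadows of Hecke operators) $D$ is supported away from such points, so this last issue does not arise. Everything after the identification is a formal manipulation with Proposition~\ref{prop:triv} and the hypothesis on $\alpha$.
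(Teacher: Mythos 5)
Your proposal is correct and follows essentially the same route as the paper's proof: push $D$ forward under $\pi$, reduce via Proposition~\ref{prop:redmap} to the divisor $\sum_i n_i\,\overline{j(P_i)}$ on the nodal $\mathbb{P}^1$, use the inertness hypotheses to see that $j_{\mathcal{O}}$ reduces to a supersingular $j$-invariant outside $\mathbb{F}_p$, translate the valuation condition into $f(j_{\mathcal{O}})\notin\mathbb{F}_p$, and conclude by Proposition~\ref{prop:triv}. You simply supply more detail than the paper does (the integral-model identification, the role of Deuring's theorem, and the cuspidal/nodal caveats), which is fine.
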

\begin{proof}
    We first push $D$ forwards under $\pi$, to obtain an element of $J$. By Proposition~\ref{prop:redmap}, we have a reduction map $J \to \Pic^{0}(\mathcal{X} / \mathbb{F}_p)$. Under this map, $D$ is sent to $\sum_i n_i j(P_i)$. Due to the assumptions on $\mathcal{O}$, $j_{\mathcal{O}}$ is a supersingular $j$-invariant mod $\mathfrak{p}$, which is not in $\mathbb{F}_p$. As $\alpha = f(j_{\mathcal{O}})$, where $f$ is as in the previous proposition, the assumption on the $\mathfrak{p}$-adic valuation translates into $f(j_{\mathcal{O}}) \notin \mathbb{F}_p$, and so the reduction of $\pi(D)$ is non-zero.   
\end{proof}

\subsection{Primes congruent to 5 or 7 modulo 12}
We work with the divisor $D = 3 \cdot \Sh(T_{2})$ or $6 \cdot \Sh(T_{2})$ depending on the canonical divisor and apply Proposition~\ref{prop:CMred} to $D$. The explicit dependence of the coefficients on $D$ pose an obstacle, as otherwise the quantity $\alpha$ could be computed without use of $p$, for some choice of $\mathcal{O}$ which works for the whole congruence class. We circumvent this issue by observing that as $X$ has non-split semistable reduction, the group $\Pic^0(\mathcal{X} / \mathbb{F}_p)$ has exponent $p + 1$, and so we may reduce the coefficients of $D$ modulo $p + 1$ for the purposes of this calculation. 

For each congruence class of $p$ within these classes, we can now compute the value $\alpha$, and determine which primes do not have the necessary valuation condition. This can be achieved by factorising $\alpha - \sigma(\alpha)$, where $\sigma$ is a Frobenius element of the Galois group for $p$. In practice, this factorisation can be unwieldy. We avoid this by using 2 CM orders, and taking the greatest common divisor of the two values, and then factorising. In Table~\ref{table:2}, we list the divisors $D$, and  in Table~\ref{table:1} we list the CM orders with the corresponding $j$-invariants used, along with any primes where Proposition~\ref{prop:CMred} does not apply. The exceptions can all be seen to be primes where $X_0(p)$ is hyperelliptic. These were computed using the \texttt{jTest} routine in \texttt{divisor\_checker.m}.

\begin{table}[ht]
\begin{tabularx}{\textwidth}{|X|X|}
\hline 
    $p$ & $D$ \\
    \hline 
     $p \equiv 7 \mod{12}$, $p \equiv 7 \mod{8}$, $ p \equiv 3,5,6 \mod{7}$ & $3\Sh(T_{2}) =  12D_{12} - 4D_{3} - 8(c_{0} + c_{\infty})$  \\
     \hline 
     $p \equiv 7 \mod{12}$, $p \equiv 7 \mod{8}$, $ p \equiv 1,2,4 \mod{7}$ & $3\Sh(T_{2}) = \left( p  - 19 \right) D_{7} + 12D_{12} + 4 D_{3} +  (-p + 3) \left( c_{0} + c_{\infty} \right)$ \\
     \hline 
     $p \equiv 7 \mod{12}$, $p \equiv 3 \mod{8}$, $p \equiv 3,5,6 \mod{7}$ & $6 \Sh(T_{2}) = \left( p - 19 \right)D_{8} + 24 D_{12} + (-p -5)( c_{0} + c_{\infty})$ \\ \hline
 $p \equiv 7 \mod{12}$, $p \equiv 3 \mod{8}$, $ p \equiv 1,2,4 \mod{7}$ &  $6 \Sh(T_{2}) = (p-19)D_{8} + (2p -38)D_{7}  + 16 D_{3}+ 24D_{12} + (-3p + 17)(c_{0} + c_{\infty})$ \\
 \hline 
  $p \equiv 5 \mod{12}$, $ p \equiv 5 \mod{8}$, $ p \equiv 3,5,6 \mod{7}$
& $ 6 \Sh(T_{2}) = (p-11)D_{4} + 12D_{16}+ ( -p -1) (c_{0} + c_{\infty})$ \\
  \hline
  $p \equiv 5 \mod{12}$, $ p \equiv 5 \mod{8}$, $ p \equiv 1,2,4 \mod{7}$& $ 6 \Sh(T_{2}) = (p+1)D_{4} + 2(p-17) D_{7}+ 12D_{16} + (-3p + 21)(c_{0} + c_{\infty})$  \\
   \hline 
   $p \equiv 5 \mod{12}$, $p \equiv 1 \mod{8}$,  $ p \equiv 3,5,6 \mod{7}$ & $6 \Sh(T_{2}) = (p-5)D_{4} + (p-17)D_{8}+ 12D_{16} + (-2p + 10)(c_{0} + c_{\infty})$ \\ \hline 
    $p \equiv 5 \mod{12}$, $p \equiv 1 \mod{8}$,  $ p \equiv 1,2,4 \mod{7}$  & $ 6 \delta_{T_{2}} = (p + 7) D_{4} + 2(p-17) D_{7} + (p-17)D_{8} + 12D_{16} + (-4p + 32)(c_{0} + c_{\infty})$ \\ \hline 
    $p \equiv 1 \mod{12}$, $p \equiv 5 \mod{8}$, $p \equiv 3,5,6 \mod{7}$ & $6 \Sh(T_{2}) = (p-19)D_{4} + 12D_{16} + 24D_{12} + (-p-17)(c_{0} + c_{\infty})$ \\ 
\hline 
$p \equiv 1 \mod{12}$, $p \equiv 5 \mod{8}$, $p \equiv 1,2,4 \mod{7}$ & $ 6 \Sh(T_{2}) = (p-7)D_{4} + 2(p-25) D_{7} + 16 D_{3} + 12D_{16} + 24D_{12} + (-3p + 5) (c_{0} + c_{\infty} )$ \\
\hline 
$p \equiv 1 \mod{12}$, $p \equiv 1 \mod{8}$, $p \equiv 3,5,6 \mod{7}$ & $6 \Sh(T_{2}) = (p-13)D_{4} + (p-25)D_{8} + 8D_{3} + 24D_{12} + 12D_{16} + (-2p -6)(c_{0} + c_{\infty})$ \\ \hline
$p \equiv 1 \mod{12}$, $p \equiv 1 \mod{8}$, $p \equiv 1, 2, 4 \mod{7}$ & $6\Sh(T_2) = 24D_3 + (p - 1)D_4 + 2(p - 25)D_7 + (p - 25)D_8 + 24D_{12} + 12 D_{16} + (-4p + 16)(c_0 + c_\infty)$  \\ \hline
\end{tabularx}
\caption{Scaled Shadow Points}
\label{table:2}
\end{table}

\begin{table}[ht]
\begin{tabularx}{\textwidth}{|X|X|c|c|}
\hline
$p$ & $j$ & $\Delta_j$ & Exceptions \\ \hline
$p \equiv 7 \mod{12}$, $p \equiv 7 \mod{8}$, $p \equiv 3, 5, 6 \mod{7}$ & $ 76771008 + 44330496\sqrt{3} $ & $-36$ & $7, 31$ \\ \hline
$p \equiv 7 \mod{12}$, $p \equiv 7 \mod{8}$, $p \equiv 1,2,4 \mod{7}$ & $ 76771008 + 44330496\sqrt{3} $ & $-36$ & $\emptyset$ \\ \hline
$p \equiv 7 \mod{12}$, $p \equiv 3 \mod{8}$, $p \equiv 3,5,6 \mod{7}$ & $ 76771008 + 44330496\sqrt{3} $ & $-36$ & $19$ \\ \hline
$p \equiv 7 \mod{12}$, $p \equiv 3 \mod{8}$, $p \equiv 1,2,4 \mod{7}$ & $76771008 + 44330496\sqrt{3}$, $41113158120 + 29071392966\sqrt{2}$ & $-36$, $-64$ & $\emptyset$ \\ \hline
$p \equiv 5 \mod{12}$, $p \equiv 5 \mod{8}$, $p \equiv 3,5,6 \mod{7}$ & $1417905000 + 818626500\sqrt{3}$ & $-48$ & 5 \\ \hline
$p \equiv 5 \mod{12}$, $p \equiv 5 \mod{8}$, $p \equiv 1,2,4 \mod{7}$ & $1417905000 + 818626500\sqrt{3}$, $26125000 + 18473000\sqrt{2}$ & $-48$, $-32$ & $29$\\ \hline
$p \equiv 5 \mod{12}$, $p \equiv 1 \mod{8}$, $p \equiv 3,5,6 \mod{7}$ & $1417905000 + 818626500\sqrt{3}$, $137458661985000 - 51954490735875\sqrt{7}$ & $-48$, $-112$ & $17$, $41$ \\ \hline
$p \equiv 5 \mod{12}$, $p \equiv 1 \mod{8}$,  $ p \equiv 1,2,4 \mod{7}$ & $1417905000 + 818626500\sqrt{3}$, $-17424252776448000 + 3802283679744000\sqrt{21}$ & $-48$, $-147$ & $\emptyset$ \\ \hline
$p \equiv 1 \mod{12}$, $p \equiv 5 \mod{8}$, $p \equiv 3,5,6 \mod{7}$ & $26125000 + 18473000\sqrt{2}$ & $-32$ & $13$ \\ \hline
$p \equiv 1 \mod{12}$, $p \equiv 5 \mod{8}$, $p \equiv 1,2,4 \mod{7}$ & $26125000 + 18473000\sqrt{2}$, $188837384000 + 77092288000\sqrt{6}$ & $-32$, $-72$ & $37$ \\ \hline
$p \equiv 1 \mod{12}$, $p \equiv 1 \mod{8}$, $p \equiv 3,5,6 \mod{7}$ & \footnotemark{} & $-448$ & $\emptyset$ \\ \hline
\end{tabularx}
\caption{Supersingular $j-$invariants.}
\label{table:1}
\end{table}
\footnotetext{$3^3 \times 5^3 \times \left(5598484075240117636186520 - 2116028083148681895438336 \sqrt{7} + \frac{-7917452107934369456841021\sqrt{2} + 2992515613551209199902175 \sqrt{14}}{2}\right)$}

\subsection{Primes congruent to 1 modulo 12}
The methods of the previous section continue to apply for $p \equiv 1 \mod{12}$, except for the class $p \equiv 1 \mod{8}$ and $p \equiv 1, 2, 4 \mod{7}$. For primes in this class, there is no longer a uniform choice of CM order with the necessary properties. The shadows are listed in Table~\ref{table:2}, and for the other 3 congruence classes, the choice of $\mathcal{O}$ and $j$ are in Table~\ref{table:1}. 

To handle the remaining cases, we show that at least one of $\Sh(T_2)$ and $\Sh(T_3)$ are of infinite order.

Let $f_1$ and $f_2$ be the functions attached to the pushforwards of $6\Sh(T_2)$ and $6\Sh(T_3)$ respectively, as in Proposition~\ref{prop:triv}. As before, we reduce the coefficients of $6 \Sh(T_2)$ and of $6 \Sh(T_3)$ modulo $p + 1$, as this does not affect their reduction. Define the curve $\tilde{C}_i$ to be fibre product $\mathbb{P}^1 \times_{f_i} \mathbb{P}^1$ less the diagonal. Further, let $C_i$ be the quotient of $\tilde{C}_i$ under the involution swapping the factors and assume $C_i$ is embedded in $\mathbb{P}^2 = \mathrm{Sym}^2 \mathbb{P}^1$.
\begin{proposition}
    Assume that the pushforwards of $6\Sh(T_2)$ and $6\Sh(T_3)$ are both zero under the reduction map in Proposition~\ref{prop:redmap}. Then $\# C_1(\mathbb{F}_p) \cap C_2(\mathbb{F}_p) \geq g(X_0^+(p))$, with notation as above.
\end{proposition}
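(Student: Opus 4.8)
The plan is to unwind the hypothesis through the triviality criterion of Proposition~\ref{prop:triv}: ``both reductions vanish'' should become the statement that $f_1$ and $f_2$ take $\mathbb{F}_p$-rational values at every supersingular $j$-invariant outside $\mathbb{F}_p$, and each such $j$-invariant will then be visible as a common $\mathbb{F}_p$-point of $C_1$ and $C_2$.

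First I would make explicit the structure of $\Pic^0(\mathcal{X}/\mathbb{F}_p)$ underlying Theorem~\ref{minregmod}: the special fibre is a rational curve whose normalization is the $\mathbb{P}^1$ we have identified it with, and whose nodes are in bijection with the Frobenius-conjugate pairs $\{j_0,\overline{j_0}\}$ of supersingular $j$-invariants with $j_0\in\mathbb{F}_{p^2}\setminus\mathbb{F}_p$, the node attached to such a pair gluing the two points $j_0$ and $\overline{j_0}=j_0^{\,p}$ of the $\mathbb{P}^1$; there are $\tfrac12 S_{p^2}$ of them. Because $\pi_*(6\Sh(T_i))$ is a rational, degree-$0$ divisor, the function $f_i$ of Proposition~\ref{prop:triv} can be taken with coefficients in $\mathbb{F}_p$, and --- using the fixed-point descriptions of Section~\ref{heckops}, by which a non-cuspidal fixed point of $T_\ell$ on $X_0(p)$ carries an ideal of index $p$ in its CM order, hence has ordinary reduction at $p$ --- its zeros and poles avoid the supersingular locus. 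The criterion I would then extract is: $\mathrm{div}(f_i)$ vanishes in $\Pic^0(\mathcal{X}/\mathbb{F}_p)$ precisely when $f_i$ descends across every node, i.e.\ when $f_i(j_0)=f_i(\overline{j_0})=f_i(j_0)^p$, i.e.\ when $f_i(j_0)\in\mathbb{F}_p$, for every supersingular $j_0\in\mathbb{F}_{p^2}\setminus\mathbb{F}_p$; the implication I need is just the contrapositive of Proposition~\ref{prop:triv}.

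Granting the hypothesis, I would fix a supersingular $j_0\in\mathbb{F}_{p^2}\setminus\mathbb{F}_p$, so that $f_1(j_0),f_2(j_0)\in\mathbb{F}_p$. For each $i$ this forces $f_i(\overline{j_0})=\overline{f_i(j_0)}=f_i(j_0)$, so $(j_0,\overline{j_0})$ is an $\mathbb{F}_{p^2}$-point of $\mathbb{P}^1\times_{f_i}\mathbb{P}^1$ lying off the diagonal (as $j_0\neq\overline{j_0}$), hence a point of $\tilde C_i$, and the unordered pair $\{j_0,\overline{j_0}\}$ is $\mathrm{Frob}_p$-stable, so it gives an $\mathbb{F}_p$-rational point of $C_i=\tilde C_i/\langle\mathrm{swap}\rangle\subset\mathrm{Sym}^2\mathbb{P}^1$. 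Carrying this out for $i=1$ and $i=2$ puts $\{j_0,\overline{j_0}\}$ in $C_1(\mathbb{F}_p)\cap C_2(\mathbb{F}_p)$, and distinct conjugate pairs give distinct points of $\mathrm{Sym}^2\mathbb{P}^1$, so I obtain at least $\tfrac12 S_{p^2}$ elements of the intersection. Finally I would observe that the special fibre of Theorem~\ref{minregmod}, being a rational curve with $\tfrac12 S_{p^2}$ nodes, has arithmetic genus $\tfrac12 S_{p^2}$, and that this equals $g(X_0^+(p))$ since arithmetic genus is constant along the proper flat family $\mathcal{X}\to\mathrm{Spec}\,\mathbb{Z}_p$; this gives the asserted bound.

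I do not expect a deep obstacle here: the statement is essentially Proposition~\ref{prop:triv} applied at all supersingular points at once. The step requiring care is the bookkeeping in the middle --- identifying the nodes of $\mathcal{X}/\mathbb{F}_p$ with Frobenius-conjugate pairs of supersingular $j$-invariants (so the descent criterion literally reads ``$f_i(j_0)\in\mathbb{F}_p$''), and checking that $\pi_*(6\Sh(T_i))$, whose support is governed by Section~\ref{heckops} and the congruence conditions on $p$, reduces to a divisor on the $\mathbb{P}^1$ supported away from the supersingular locus. The genuinely hard work comes downstream: using this lower bound together with Weil-type point counts on $C_1$ and $C_2$ to force a contradiction for every prime in this congruence class.
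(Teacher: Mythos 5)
Your proposal is correct and follows essentially the same route as the paper: apply the contrapositive of Proposition~\ref{prop:triv} at every supersingular $j$-invariant outside $\mathbb{F}_p$ to get $f_i(j_0)\in\mathbb{F}_p$, observe that each Frobenius-conjugate pair then yields an $\mathbb{F}_p$-point on both $C_1$ and $C_2$, and count these pairs as $\tfrac12 S_{p^2}=g(X_0^+(p))$ via Theorem~\ref{minregmod}. Your extra checks (that the support of the reduced divisor avoids the supersingular locus, and the arithmetic-genus identification) are sound elaborations of steps the paper leaves implicit.
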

\begin{proof}
    As the pushforward of $6\Sh(T_2)$ is zero under the reduction map, for every supersingular $j$-invariant, $f_1(j) \in \mathbb{F}_p$ by Proposition~\ref{prop:triv}. In particular, $f_1(j) = f_1(\overline{j})$, and so $(j, \overline{j})$ is a point on the fibre product. If $j$ is not defined over $\mathbb{F}_p$, this point is not on the diagonal, and so defines a point on $\tilde{C}_1$. This reduces to a point on $C_1$, and as $j$ is defined over $\mathbb{F}_{p^2}$, it becomes rational. The same argument applies for $f_2$ and so this point appears on both curves. The bound in the statement of the proposition then follows as there are $g(X_0^+(p))$ such pairs of $j$-invariants.
\end{proof}

Unless $C_1$ and $C_2$ share a common component, the size of the intersection is bounded by Bezout's theorem. We therefore study the components of $C_1$.
\begin{proposition}
    All components of $C_1$ contain the image of the point $(1728, 8000) \in \tilde{C}_1$.
\end{proposition}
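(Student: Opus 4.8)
The plan is to work directly with the explicit rational function $f_{1}$ and with the fibres of $\tilde C_{1}$ over the $j$-line. After reducing all exponents of $6\Sh(T_{2})$ modulo $p+1$ — harmless, since $\Pic^{0}(\mathcal X/\mathbb F_{p})$ has exponent dividing $p+1$ — the function $f_{1}$ becomes the reduction mod $p$ of a single rational function $F_{1}\in\mathbb Q(j)$ whose divisor is supported on $j\in\{0,1728,-3375,8000,54000,287496\}$ and the cusp, with the $p$-independent multiplicities read off the last row of Table~\ref{table:2}. So it is enough to prove the statement for the geometric curve $\tilde C_{1}$ attached to $F_{1}$, for all $p$ outside an explicit finite set. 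Since $F_{1}$ has negative multiplicity at both $j=1728$ and $j=8000$, we get $F_{1}(1728)=F_{1}(8000)=\infty$, so $(1728,8000)$ lies in the fibre product, and it is off the diagonal because $1728\neq 8000$; thus $(1728,8000)\in\tilde C_{1}$.

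Next, let $Z$ be an irreducible component of $\tilde C_{1}$. The first projection $\mathrm{pr}_{1}\colon Z\to\mathbb P^{1}$ cannot be constant, for otherwise $Z=\{a_{0}\}\times\mathbb P^{1}$ and $F_{1}$ would be constant; hence it is surjective and $Z$ meets the fibre of $\mathrm{pr}_{1}$ over $j=1728$. I would compute this fibre explicitly: the pole orders of $F_{1}$ are $2$ at $1728$, $26$ at $8000$ and $52$ at $-3375$ (plus whatever the normalisation forces at the cusp), so every point of the fibre is either $(1728,1728)$ or a point $(1728,b)$ with $b\in\{-3375,8000\}$ (or the cusp), and near such a point $\tilde C_{1}$ is cut out, up to a unit, by $y^{m}=(j-1728)^{2}$, where $y$ is a local coordinate at $b$ and $m$ is the pole order of $F_{1}$ there; this determines the local branches and their degrees over the $a$-line. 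The crux is to rule out a component meeting $\{j=1728\}$ only at $(1728,1728)$ and $(1728,-3375)$ (respectively, only at $(1728,1728)$ and the cuspidal point). Here I would exploit that $j=1728$ is the pole of $F_{1}$ of smallest order: writing $F_{1}=c\,g^{2}$ with $g\in\overline{\mathbb Q}(j)$ — legitimate since every multiplicity of $F_{1}$ is even — the function $g$ has a \emph{simple} pole at $j=1728$ and so is unramified there. In the dictionary between components of $\tilde C_{1}$ and orbits of the monodromy group of $g$ on ordered pairs of distinct sheets, the sheet over $j=1728$ is a fixed point of the local monodromy at $j=\infty$; tracing a loop around $j=\infty$ then forces every orbit of pairs to contain a pair $(\sigma_{1728},\tau)$ with $\tau$ in the monodromy cycle of sheets over $\infty$ that collapses to $j=8000$, that is, every component of $\tilde C_{1}$ passes through $(1728,8000)$ or its transpose $(8000,1728)$.

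Finally I would descend to $C_{1}\subset\mathbb P^{2}=\mathrm{Sym}^{2}\mathbb P^{1}$: the curve $\tilde C_{1}$ is stable under the swap involution, its components form orbits of size one or two, each such orbit contains $(1728,8000)$ or $(8000,1728)$ by the previous step, and hence maps to a component of $C_{1}$ through the point $\{1728,8000\}$; every component of $C_{1}$ arises this way. I expect the monodromy step to be the main obstacle — identifying which orbit of sheet-pairs a branch over $j=1728$ lies on requires a genuine computation with the monodromy of $g$, and one must also track the possible interchange of distinct components by the swap — and this is precisely the step in which the minimality of the pole order of $F_{1}$ at $j=1728$, which yields an unramified point and hence a fixed sheet, is used. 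Since $F_{1}$ is a fixed rational function independent of $p$, this step can alternatively be settled once and for all by an explicit decomposition of $\tilde C_{1}$ into components, as in the accompanying code.
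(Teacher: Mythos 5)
Your setup is sound and overlaps with the paper's: you correctly identify that $(1728,8000)$ lies on $\tilde C_1$ because both coordinates are poles of $f_1$, and that components of $\tilde C_1$ correspond to orbits of the monodromy group on ordered pairs of distinct sheets. But the crucial step is asserted, not proved. Your claim that ``tracing a loop around $j=\infty$ forces every orbit of pairs to contain a pair $(\sigma_{1728},\tau)$ with $\tau$ in the $26$-cycle'' does not follow from anything you have established: the local monodromy at $\infty$ generates only a cyclic subgroup, whose orbits on pairs are in general much finer than the orbits of the full monodromy group, and there is no a priori reason that every global orbit on distinct pairs meets the set of pairs consisting of the fixed sheet at $1728$ and a sheet in the cycle collapsing to $8000$. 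The unramifiedness of $g$ at $j=1728$ gives you a distinguished sheet but nothing about how the other branch points move it around. You flag this yourself as ``the main obstacle,'' and the fallback you offer (an explicit decomposition of $\tilde C_1$) is essentially what the paper does; but the paper gets there by a cheaper route you do not use: since every exponent of $f_1$ is even, $f_1=g_1^2$ splits $\tilde C_1$ into the loci $g_1(a)=\pm g_1(b)$, a Magma computation shows these are exactly the two irreducible components over $\mathbb{Q}$, both contain $(1728,8000)$ because both coordinates are poles of $g_1$ (so both sign conditions degenerate there), and rationality of the point places it on every geometrically irreducible constituent.

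The second, independent gap is the passage to characteristic $p$. The proposition is about $C_1$ over $\mathbb{F}_p$, and a geometrically irreducible component over $\overline{\mathbb{Q}}$ can become reducible upon reduction; knowing that every component over $\overline{\mathbb{Q}}$ contains the point does not yet give the statement mod $p$. The paper spends the second half of its proof on precisely this, invoking Birch's lemma to show the geometric monodromy group does not shrink under reduction (checking that $p$ divides no monodromy element's order and that ramification points over a common branch point do not collide mod $p$ for $p$ in the relevant congruence class). Your proposal reduces coefficients mod $p+1$ to make $F_1$ independent of $p$, which is fine, but then works only with the characteristic-zero curve and never rules out further splitting mod $p$; as written this step is missing entirely.
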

\begin{proof}
    From Table~\ref{table:2}, we see that
    $$f_1 = \frac{x^{24} (x - 54000)^{12} (x - 287496)^{12}}{(x - 1728)^2 (x - 8000)^{26} (x + 3375)^{52}}.$$
    This is a perfect square, so let $f_1 = g_1^2$. This forces $\tilde{C}_1$ to have at least 2 components, one where $g_1$ agrees on the two points, and the other where $g_1$ takes opposite signs. These are stable under swapping the order of points and so $C_1$ has at least 2 components. 

    Over $\mathbb{Q}$, $\tilde{C}_1$ has exactly 2 irreducible (but not necessarily geometrically irreducible) components by direct calculation in Magma, which must be the two already described. The point $(1728, 8000)$ is on both, since it is a pole for $g_1$. As the point is rational, it will be on all geometrically irreducible components comprising the irreducible components. 

    It now remains to determine when these geometrically irreducible components split further modulo $p$. The number of such components is related to the Galois group of $f_1$. Namely, let $G$ be the geometric Galois group, and it acts naturally on a set of order 80. The number of geometrically irreducible components of $C_1$ is the number of orbits of $G$ acting on unordered pairs from the set of 80 elements, and so it is enough to understand how the geometric Galois group behaves under reduction. By Lemma 2 in \cite{Birch1959}, this can be deduced from the monodromy information. In particular, if $p$ does not divide the order of any monodromy element, and no  ramification points above a fixed branch point collide modulo $p$, then reduction is injective on the Galois group. There are 7 branch points, $0$, $\infty$ and a degree 5 orbit. We first check $0$, and $f_1^{-1}(0)$ consists of 4 points, $0$, $54000$, $287496$ and $\infty$. The differences of these are not divisible by any prime congruent to 1 mod 12. Similarly, no difference between the points over $\infty$ are divisible by a prime congruent to 1 mod 24. Finally, if there were a collision above any of the other 5 branch points, this would violate the Riemann-Hurwitz formula for the genus, unless two of the branch points also collided. Analysing the polynomial shows that again, this cannot happen for a suitable $p$.

    We therefore conclude that all irreducible components of $C_1$ over $\overline{\mathbb{Q}}$ remain irreducible modulo $p$, and so all irreducible components modulo $p$ contain the given point.
\end{proof}

The value of $6 \Sh(T_3)$ depends on the congruence class of $p \mod{11}$. If $p$ is a non-residue modulo 11,  
$$6 \Sh(T_3) = (p - 1)D_3 + 12 D_4 + 2(p - 25)D_8 + (p - 25)D_{12} + 24 D_{27} + 12 D_{36} - 4(p - 4)(c_{\infty} + c_{0}),$$
and 
$$f_2 = \frac{(x - 1728)^{12} (x + 12288000)^{24} (x^2 - 153542016x - 1790957481984)^{12}}{x^2 (x - 8000)^{52} (x - 54000)^{26}}$$
If $p$ is a residue, 
$$6 \Sh(T_3) = (p + 15)D_3 + 24 D_4 + 2(p - 25)D_8 + 2(p - 25)D_{11} + (p - 25)D_{12} + 24 D_{27} + 12 D_{36} - (6p - 38)(c_{\infty} + c_{0})$$
and
$$f_2 = \frac{x^{12} (x - 1728)^{24} (x + 12288000)^{24} (x^2 - 153542016x - 1790957481984)^{12}}{(x - 8000)^{52} (x + 32768)^{52} (x - 54000)^{26}}.$$

In either case, we see that the image of $(1728, 8000)$ cannot be on $C_2$, since 1728 is a zero of $f_2$ in both cases, and 8000 is a pole. In particular, $C_2$ cannot contain a component of $C_1$, since all components of $C_1$ contain the image of $(1728, 8000)$. We can now apply Bezout's theorem. If $p$ is not a residue modulo 11, then the degree of $C_1$ and $C_2$ are both 79 (the degree of $f_1$ and $f_2$, minus 1 for the diagonal component that has been removed), and so $\# C_1 \cap C_2 \leq 79^2$. Similarly, if $p$ is a residue, then $\# C_1 \cap C_2 \leq 79 \times 129$. Combining this inequalities gives an upper bound for the genus of any curve with both $\Sh(T_2)$ and $\Sh(T_3)$ torsion. 

As $g(X_0^+(p)) = \frac{p - 1}{24} - \frac{1}{4}h(-4p)$, any $p$ with both divisors torsion has $h(-4p) \geq \frac{p - 1}{6} - 4 \times 79^2$ if $p$ is a non-residue modulo 11, and $h(-4p) \geq \frac{p - 1}{6} - 4 \times 79 \times 129$ otherwise. Applying the class number bound, Proposition~\ref{classbound}, gives $2 \sqrt{p} \log(4p) \geq \frac{p - 1}{2} - 12 \times 79^2$, which gives $p \leq 172090$. There are 430 primes in this congruence class where genus is below the bound, and the largest is 150889. For each prime, we check $6 \Sh(T_2) \neq 0$ using Proposition~\ref{prop:triv}.

Similarly, if $p$ is a residue, the bound is $p \leq 273684$, and there are 642 primes in this class with genus below the bound, and of these, 244897 is the largest. Again, applying Proposition~\ref{prop:triv} shows that for all of them, $6 \Sh(T_2)$ is non-zero. These cases are checked in the file \texttt{primes1T2.m}.

\subsection{Primes congruent to 11 modulo 12}
When $p \equiv 11 \mod{12}$ we cannot guarantee that $\Sh(T_{2}) \ne 0$, or indeed $\Sh(T_\ell)$ for any fixed prime $\ell$. We instead consider $\Sh(w_{p})$, 
\begin{center}
    $\Sh(w_{p}) = \left( g -1 \right) \left( D_{p} + D_{4p} \right) - n(g-1)\left( c_{0} + c_{\infty} \right)$
\end{center}
where $n = \frac{1}{2} \left( h \left( -p \right) + h \left( - 4p \right) \right)$, and $g$ is the genus of $X_{0}(p)$.

If $[\Sh(w_{p})]]$ is torsion in $J_{0}(p)(\mathbb{Q})$, we divide by $g-1$ and project onto $X_{0}^{+}( p)$ to obtain 
\begin{center}
$ \left( \pi(D_{p}) + \pi(D_{4p}) \right) - 2n c_{\infty}^{+}$
\end{center}
where $c_{\infty}^{+}$ is the unique cusp of $X_{0}^{+}(p)$.
Applying Proposition \ref{M3} we get:
\begin{center}
    $h \left( -4p \right)  + h \left( -p \right) \ge \frac{p}{96}$.
\end{center}

When $p \equiv 7 \mod{8}$, $h \left( -4p \right) = h \left( -p \right)$ and we find that for $p \ge 749, 504$:
\begin{center}
    $2 h \left( -p \right) \le \frac{2}{3} \sqrt{p} \log p < \frac{p}{96}$.
\end{center}
Computationally, we reduce the above bound to $104 711$ and note that there are $566$ primes satisfying the stated congruence conditions. 

When $p \equiv 3 \mod{8}$, $h \left( - 4p \right) = 3 h \left( -p \right)$, and repeating the above we get $p \ge 3 754 815$, which we reduce to $49 451$, and $266$ primes to check.
The list of exceptions includes the primes $\{ 11, 23, 47, 59, 71 \}$, for which $X_{0}(p)$ is hyperelliptic. For all non-hyperelliptic exceptions we find a supersingular $j$-invariant defined over $\mathbb{F}_{p^{2}} \setminus \mathbb{F}_{p}$ and argue as in Proposition~\ref{prop:triv}. These cases are checked in the file \texttt{primes11AL.m}, using that the $j$-invariants of points in $D_p$ and $D_{4p}$ reduce to the $\mathbb{F}_p$-rational supersingular $j$-invariants modulo $p$. A suitable $j$-invariant cannot be found for the bi-elliptic $X_0(p)$, and so are checked individually using the $T_2$ shadow.

\section{Proof of Theorem 2}
\label{proofof2}
In this section we prove that the Ceresa cycle is non-trivial for all but finitely many $X_{0}(N)$, where $N $ is any positive integer. Our proof uses the following elementary result. 

\begin{proposition} 
Let $C$ and $D$ be algebraic curves of genus at least $3$, such that $C$ covers $D$ and $\Cer(D, f)$ is non-zero in $CH_{1}(J)$ for all choices of $f$. Then $\Cer(C, e)$ is also non-zero for any choice of basepoint divisor $e$.
\label{prop:pushforward}
\end{proposition}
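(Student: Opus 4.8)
The plan is to exploit the functoriality of the Gross--Kudla--Schoen cycle under pushforward along the covering map $f\colon C \to D$. Recall from Theorem~\ref{zhang} that $\Cer(C,e)=0$ is equivalent to $\Gamma^3(C,e)=0$ in $\CH_1(C^3)$, and that this can only happen for the distinguished basepoint class $e = \tfrac{K_C}{2g-2}$; similarly $\Cer(D,f)=0$ iff $\Gamma^3(D, f')=0$ for $f' = \tfrac{K_D}{2g_D - 2}$. So it suffices to show that if $\Gamma^3(C,e)=0$ then $\Gamma^3(D,f')=0$, contradicting the hypothesis on $D$.

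First I would set up the map $F = f\times f\times f\colon C^3 \to D^3$ and compute $F_*\big(\Gamma^3(C,e)\big)$. The key point is that pushforward interacts well with the building blocks of $\Gamma^3$: writing $m = \deg f$, one has $F_*(\Delta^3_C) = \Delta^3_D$ up to the locus where fibres degenerate, $f\times f$ pushes the diagonal $\Delta_C$ to $\Delta_D$ (again up to a multiplicity/correction supported off the generic diagonal), and $f_* e$ is a degree-$m$ divisor on $D$ proportional to $\tfrac{m}{2g_D-2}K_D$ once we use the Riemann--Hurwitz relation $K_C \sim f^*K_D + R$ together with $f_*f^* = m\cdot \mathrm{id}$ and $f_*R$. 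The cleanest route is to observe that $F_*\Gamma^3(C,e)$ is again a modified diagonal cycle of the shape $\Gamma^3(D, \tilde e)$ for $\tilde e = f_* e$ scaled to degree $1$, plus possibly a term that is killed because $\Gamma^3$ is rationally trivial once restricted appropriately; I would verify termwise, using the projection formula for the $\pr_i$ and $\pr_{i,j}$, that $F_*\Gamma^3(C,e) = c\cdot \Gamma^3(D, \tfrac{K_D}{2g_D-2})$ for a nonzero rational constant $c$ (some power of $m$). Since $\Gamma^3(C,e)=0$ implies its pushforward is $0$, we get $\Gamma^3(D,\tfrac{K_D}{2g_D-2})=0$, i.e.\ $\Cer(D)=0$, contradiction. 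The genus hypotheses ($\ge 3$) guarantee we are in the range where Theorem~\ref{zhang}'s dichotomy is meaningful and that $2g-2, 2g_D-2 \ne 0$.

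The main obstacle is the bookkeeping of the \emph{correction terms}: $F_*$ of $\pr_{1,2}^*\Delta_C$ is not literally $\pr_{1,2}^*\Delta_D$ — the fibre of $f\times f$ over a point of $\Delta_D$ contains the ramification-adjusted diagonal of $C$ plus off-diagonal components, so the naive identities hold only modulo cycles supported on smaller strata, and one must track how these combine with the $\pr_i^* e$ and $(e\times e)$ terms. The right way to control this is to note that every term in $\Gamma^3(C,e)$ other than $\Delta^3_C$ is already a product of a (pulled-back) diagonal with a (pulled-back) basepoint divisor, and $f_*$ of such a term is computed by the projection formula as $(f_*\Delta_C\text{-type class})$ times $(f_* e)$; the genuinely delicate identity is $f_*(\Delta^3_C) = \Delta^3_D$, which holds as cycles because the small diagonal maps isomorphically (degree $1$, not $m$, since the three coordinates being equal forces a single point of $C$ over a single point of $D$) onto $\Delta^3_D$ with the residual components contributing only in codimension that does not affect the $\CH_1$ class — or, if it does, those residual components cancel against the pushforwards of the remaining six terms. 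I would carry out this cancellation explicitly, which is the crux of the argument; alternatively, one can avoid it entirely by instead \emph{pulling back} via $F^*$ from $D^3$ and using $F^*\Gamma^3(D,\tfrac{K_D}{2g_D-2}) = \Gamma^3(C, f^*\tfrac{K_D}{2g_D-2})$ up to correction, but since $f^*\tfrac{K_D}{2g_D-2}$ need not equal $\tfrac{K_C}{2g_C-2}$ this reintroduces the basepoint subtlety, so pushforward is preferable. Either way, the heart of the proof is the compatibility of $\Gamma^3$ with the correspondence induced by $f$, and the genus-$\ge 3$ and "all basepoints" hypotheses are exactly what make the distinguished-basepoint reduction of Theorem~\ref{zhang} line up on both sides.
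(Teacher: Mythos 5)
Your strategy of pushing forward the Gross--Kudla--Schoen cycle along $F=\pi\times\pi\times\pi\colon C^3\to D^3$ does not work as stated: the central identity you propose to verify, $F_*\Gamma^3(C,e)=c\cdot\Gamma^3\bigl(D,\tfrac{K_D}{2g_D-2}\bigr)$, is false whenever $m=\deg\pi>1$. The seven constituent cycles acquire \emph{different} powers of $m$ under $F_*$. The small diagonal $\Delta_C^3\cong C$ maps onto $\Delta_D^3\cong D$ with degree $m$, not degree $1$ as you assert --- the three coordinates being equal forces them to be a single point $x$ of $C$, but there are $m$ choices of such $x$ above a given point of $D$ --- so $F_*\Delta_C^3=m\,\Delta_D^3$. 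The terms $\pr_{i,j}^*(\Delta_C)\cdot\pr_k^*e$ push forward to $m^2$ times the corresponding terms for $(D,e')$ with $e'=\tfrac{1}{m}\pi_*e$, since $(\pi\times\pi)_*\Delta_C=m\Delta_D$ and $\pi_*e=me'$; and the terms $\pr_{i,j}^*(e\times e)$ push forward to $m^3$ times theirs, the extra factor coming from $\pi_*[C]=m[D]$. The discrepancy $F_*\Gamma^3(C,e)-m\,\Gamma^3(D,e')$ is therefore a nonzero combination of $\Delta_D\times e'$-type and $e'\times e'\times D$-type cycles; these have K\"unneth components in different summands of $H^*(D^3)$, so the discrepancy is not even homologically trivial, and no cancellation ``in lower-dimensional strata'' can rescue the identity. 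Consequently $\Gamma^3(C,e)=0$ does not, by this route, yield $\Gamma^3(D,\cdot)=0$.

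The paper sidesteps all of this by working in the Jacobians rather than in the triple products: the covering $\pi$ induces the norm map $\pi_*\colon J_C\to J_D$, under which $\iota_e(C)$ maps with degree $\deg\pi$ onto a translate of an Abel--Jacobi image of $D$, giving $\pi_*\Cer(C,e)=\deg(\pi)\,\Cer(D,\pi(e))$ directly; since $\Cer(D,f)\neq 0$ for every basepoint $f$ by hypothesis, $\Cer(C,e)\neq 0$. If you wish to keep your GKS-cycle framework, the correct order of operations is to first apply Theorem~\ref{zhang} to pass from $\Gamma^3(C,e)=0$ to $\Cer(C,e)=0$ and only then push forward through $J_C\to J_D$ --- which is exactly the paper's one-line argument.
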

\begin{proof}
Let $\pi : C \to D$ denote the covering map. As $\pi_* [\iota_{e}(C)] = \deg(\pi) [\iota_{\pi(e)}(D)]$, we have $\pi_* \Cer(C, e) = \deg(\pi) \Cer(D, \pi(e))$. By assumption, $\Cer(D, f)$ is non-torsion for all base-points, $f$, and so the pushforward of $\Cer(C,e)$ is of infinite order, and in particular, $\Cer(C,e)$ is also of infinite order. 
\end{proof}

For any prime $p$ dividing $N$, the natural quotient map induces a covering of curves
\begin{center}
    $X_{0}(N) \longrightarrow X_{0} \left( p\right)$
\end{center}
and thus from Theorem \ref{THM1}, it follows that if $N$ is divisible by a prime $p$ and 
\begin{center}
$p \notin \{2,3,5,7,11,13,17,19,23,29,31,37,41,47,59, 71 \} $
\end{center}
 that is, $X_{0}(p)$ has genus  at least $3$ and it is not hyperelliptic (see \cite{ogg1974hyperelliptic}), then $\Cer(N)$ is of infinite order. It remains to show that for any $p$ in the above list, $\Cer(p^n)$ is non-torsion for some $n \ge 2$. For most primes we take $n=2$. When $p = 2, 3,5,7$, we take larger powers to ensure that the Jacobian is not hyperelliptic and usually also of positive rank. 

The case for $p=2$ is well known. 
\begin{theorem}
The cycle $\Cer(64)$ is non-zero.    
\end{theorem}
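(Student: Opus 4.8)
The plan is to reduce to a classically known computation. As recalled above, the canonical model of $X_{0}(64)$ is the Fermat quartic $\{x^{4}+y^{4}=z^{4}\}\subset\mathbb{P}^{2}$; this can be verified directly by writing down the canonical embedding of $X_{0}(64)$ from a basis of the space of weight $2$ cusp forms of level $64$, and is in any case recorded in the literature. Since $X_{0}(64)$ has genus $g=3$, Theorem \ref{zhang} shows that $\Cer(X_{0}(64),e)$ can possibly vanish only for $e=\frac{K}{2g-2}$, so it suffices to prove that the Ceresa cycle of the Fermat quartic, for this distinguished basepoint, has infinite order in $\CH_{1}(J_{0}(64))$.

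For this we invoke the work of Harris \cite{harris1983homological}. Using the harmonic volume --- iterated integrals of holomorphic and anti-holomorphic differentials along loops on the curve --- Harris proves that the Ceresa cycle of the Fermat quartic is not algebraically equivalent to zero, and moreover that it has infinite order modulo algebraic equivalence (the relevant invariant takes values in $\mathbb{R}/\mathbb{Z}$, and Harris shows its value is irrational). The algebraic-equivalence class of the Ceresa cycle is independent of the choice of basepoint: changing $e$ replaces $\iota_{e}(C)$ and $[-1]^{*}\iota_{e}(C)$ by translates, which are algebraically equivalent to the originals, so the difference is unchanged modulo algebraic equivalence. Hence Harris's conclusion applies in particular to the basepoint $\frac{K}{2g-2}$, and $\Cer(64)$ is non-torsion in $\CH_{1}(J_{0}(64))$. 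An alternative $\ell$-adic proof of the non-triviality of this class was subsequently given by Bloch \cite{bloch1984algebraic}.

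There is essentially no obstacle here beyond assembling these known ingredients; the only points requiring care are the identification of the canonical model of $X_{0}(64)$ with the Fermat quartic, the basepoint-independence of the algebraic-equivalence class of the Ceresa cycle, and the fact that Harris's harmonic-volume invariant genuinely detects infinite order (not merely non-vanishing). We emphasise that, in contrast to the rest of the paper, this separate argument is necessary because $J_{0}(64)(\mathbb{Q})$ has rank $0$, so the shadow-point method does not apply.
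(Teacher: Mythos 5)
Your overall strategy is the same as the paper's: identify $X_{0}(64)$ with the Fermat quartic and quote the known results for that curve; the paper's proof is exactly this, citing \cite{ellenberg2024certifyingnontrivialityceresaclasses} for a direct verification and \cite{harris1983homological} for the stronger statement about algebraic equivalence. Your observation that the algebraic-equivalence class of the Ceresa cycle is basepoint-independent (translates are algebraically equivalent) is correct and usefully makes explicit why Harris's choice of basepoint is irrelevant.

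There is, however, one substantive problem with the way you apportion the work. In this paper ``$\Cer(64)$ is non-zero'' means non-zero in $\CH_{1}(J_{0}(64))\otimes\mathbb{Q}$, i.e.\ non-torsion integrally, so you genuinely need an infinite-order statement, and you attribute it to the wrong source. Harris's harmonic-volume argument in \cite{harris1983homological} is a rigorously error-bounded \emph{numerical} non-vanishing computation: it shows the relevant invariant in $\mathbb{R}/\mathbb{Z}$ is not $0$ (in fact not in $\frac{1}{2}\mathbb{Z}/\mathbb{Z}$), which yields non-triviality modulo algebraic equivalence, but it does not establish irrationality of that period and hence does not by itself give infinite order --- and non-triviality in the Griffiths group does not preclude the class being torsion there. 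The infinite-order statement is due to Bloch's $\ell$-adic argument in \cite{bloch1984algebraic}, which you mention only as ``an alternative proof of non-triviality''; alternatively one can fall back on the direct certification in \cite{ellenberg2024certifyingnontrivialityceresaclasses}, as the paper does. Your proof is repaired simply by making Bloch (or Ellenberg--Logan--Srinivasan) carry the torsion-freeness, with Harris supplying the weaker non-algebraic-equivalence statement; as written, the key step rests on a claim Harris did not prove.
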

\begin{proof}
 It is a well-known fact that the modular curve $X_{0}(64)$ is the Fermat quartic. Non-vanishing is verified directly in \cite{ellenberg2024certifyingnontrivialityceresaclasses}. Moreover, in \cite{harris1983homological} Harris famously proved  that the Ceresa cycle of the Fermat quartic is not algebraically equivalent to zero. 
\end{proof}
\subsection{Heegner Divisors}
For some $N \equiv 11 \mod{12}$, certain shadows are supported on Heegner points and cusps only, in which case we deduce that the (scaled) shadow point is non-torsion using the classical theory of Heegner divisors.

Let $P \in X_{0}(N)$ be a \textit{Heegner point} (that is, a CM point which satisfies the \textit{Heegner hypothesis}). The \textit{Heegner divisor} defined by $P$ is the degree $0$ divisor 
\begin{center}
    $D_{P} = \displaystyle \sum_{\sigma} P^{\sigma} - M c_{\infty}$
\end{center}
where the sum is taken over the Galois conjugates of $P$, $c_{\infty}$ is the $\infty$ cusp of $X_{0}(N)$ and $M$ is the degree of the field over which $P$ is defined. The following is a simple consequence of results proved in \cite{gross1984heegner}(or more generally \cite{gross1986heegner}).
\begin{theorem}[Gross]
Let $P$ and $D_{P}$ be as above. Then $[D_{P}]$ is a point of infinite order in $J_{0}(N)(\Q)$.
\end{theorem}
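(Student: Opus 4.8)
The plan is to reduce the statement to the Gross--Zagier theorem on Heegner points, via the standard dictionary between Heegner divisors on $X_0(N)$ and Heegner points on elliptic quotients of $J_0(N)$. First I would recall that, since $P$ satisfies the Heegner hypothesis for $N$, the divisor class $[D_P]$ is defined over the Hilbert class field $H$ of the associated imaginary quadratic order, and the trace down to $\Q$ (equivalently, summing over Galois conjugates as in the definition of $D_P$) produces a point of $J_0(N)(\Q)$. The key point is that the Manin--Drinfeld theorem guarantees that the cuspidal contribution $M c_\infty$ is torsion, so $[D_P]$ has infinite order if and only if the genuine Heegner part does.

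Next I would project $[D_P]$ onto a newform quotient. Since the Ceresa/shadow application only needs \emph{some} nonzero component, I would pick a normalized newform $f$ of level dividing $N$ with $L(f,1) = 0$ for which the $f$-isotypic Heegner point is nonzero — such $f$ exists because the full Heegner point on $J_0(N)$ cannot be torsion: by the Gross--Zagier formula \cite{gross1986heegner}, the N\'eron--Tate height of the $f$-component of the Heegner point is a nonzero multiple of $L'(f/K,1)$, and by work of Kolyvagin together with analytic nonvanishing results (Bump--Friedberg--Hoffstein, Murty--Murty) there is always a quadratic twist or a form in the relevant space for which the central derivative is nonzero. Concretely, for the levels arising in our application one takes $f$ to be the newform attached to the relevant elliptic quotient $E/\Q$ of analytic rank $1$; then the Gross--Zagier formula gives $\widehat{h}(y_f) = c \cdot L'(E/K,1) \neq 0$ with $c > 0$, so $y_f$ has infinite order, and since $y_f$ is (up to torsion and a nonzero rational scalar) the image of $[D_P]$ under the quotient map $J_0(N) \to E$, the class $[D_P]$ itself has infinite order.

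The main obstacle is really bookkeeping rather than depth: one must check that for the specific point $P$ chosen in the later application the Heegner hypothesis genuinely holds (so that the full Gross--Zagier machinery applies with the $\infty$-cusp as the canonical base point), and that the newform quotient onto which we project actually receives a nonzero Heegner point — i.e. that the relevant $L$-derivative is nonzero. Since we only invoke this theorem for a finite, explicit set of composite levels $N \equiv 11 \bmod 12$, I would verify the nonvanishing of $L'$ (equivalently, the infinitude of the Heegner point) directly by machine computation in these finitely many cases, which sidesteps any need for a uniform analytic nonvanishing statement. Everything else — defining $D_P$ over $H$, taking the trace, discarding the cuspidal torsion via Manin--Drinfeld, and citing \cite{gross1984heegner} or \cite{gross1986heegner} for the height formula — is standard.
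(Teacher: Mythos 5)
The paper does not actually prove this statement: it is quoted as a theorem of Gross, with the remark that it is ``a simple consequence'' of results in \cite{gross1984heegner} and \cite{gross1986heegner}. Your reduction to the Gross--Zagier height formula is therefore the intended route, and the outer scaffolding (descent of $[D_P]$ to $\Q$ via the Galois trace, disposal of the cuspidal contribution by Manin--Drinfeld, projection to an $f$-isotypic quotient) is correct.

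The soft spot is your middle paragraph. The sentence ``such $f$ exists because the full Heegner point on $J_0(N)$ cannot be torsion'' is circular --- that is exactly the statement being proved --- and the analytic nonvanishing results you invoke (Bump--Friedberg--Hoffstein, Murty--Murty) run in the wrong direction: they fix a newform $f$ and produce a quadratic field $K$ with $L'(f/K,1)\neq 0$, whereas here $K$ is forced on you (it is $\Q(\sqrt{-7})$ or $\Q(\sqrt{-11})$, determined by which CM points occur in the fixed-point divisor of the chosen Hecke operator) and you must instead find a suitable $f$ of level dividing $N$. A second point you gloss over: since $D_P$ is the trace all the way down to $\Q$ rather than to $K$, complex conjugation kills the $f$-components whose root number over $\Q$ is $+1$, so one must additionally choose $f$ with $\epsilon_f=-1$; this is consistent with your choice of an analytic-rank-one elliptic quotient, but it should be stated. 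Your fallback --- verifying the nonvanishing of the relevant $L'$ by direct computation for the four levels $N=23^2,47^2,59^2,71^2$ actually used --- is what genuinely carries the argument, and with that finite check in place your proof suffices for the application, although it establishes somewhat less than the blanket statement of the theorem as quoted.
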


For $p \in \{23, 47, 59, 71 \}$, we take $N = p^{2}$ and note that $X_{0}(N)$ has $p+1$ cusps, $c_{\infty}, c_{0}, c_{1}, \ldots, c_{p-1}$, where $c_{\infty}$ and $c_{0}$ are defined over $\mathbb{Q}$ and $c_{1}, \ldots, c_{p-1}$ form a single Galois orbit. We write $D$ for the divisor class corresponding to the sum of the cusps. We compute the following shadows:
\begin{itemize}
  \item $ N = 23^2 : 6\Sh(T_{2}) = 816D_{7} - 68D$;
\item $N = 47^2: 6\Sh(T_{3})= 3936D_{11} - 164D$; 
  \item $  N = 59^2: 6\Sh(T_{5}) =6360D_{11} - 212D $;
  \item $ N = 71^2: 6\Sh(T_{2}) = 9360D_{7} - 260D$.
\end{itemize}
The scaled shadow points differ from a Heegner divisor by a divisor supported on cusps only, and as such divisors are torsion points in $J_{0}(N)$ by the theorems of Manin \cite{Manin} and Drinfeld \cite{Drinfeld}, we conclude that the scaled shadow points are necessarily non-torsion. 

\subsection{Explicit Calculations} For curves of low genus, we can use \texttt{Magma} to efficiently compute divisors on the curve and explicitly test whether our chosen shadow point is torsion. 

For $N \in \{ 11^{2}$, $5^3$, $13^2 \}$, we work with $D = \Sh(w_{N})$. We begin by computing a canonical model for the curve using a basis of the space of weight $2$ cusps forms for $\Gamma_{0}(N)$, see  \cite{galbraith1996equations}. In particular, we choose a basis whose elements are invariant or anti-invariant under the action of $w_{N}$ and from this we are able easily construct $D$ explicitly. We show that modulo different primes, the divisor has different orders, and hence by the injectivity of torsion \cite[Appendix]{katz}, we conclude that the shadow is necessarily of infinite order. See the online repository for these explicit calculations. 

Moreover, we also verify that $X_{0}(74)$ has non-trivial Ceresa cycle by the same methods. This, along with Theorem \ref{THM1} and the proof of Theorem \ref{THM2} recovers the main result of \cite{kerr2024non}.
 
 \subsection{Period Lattice Computations}
For $N \in \{ 3^5, 7^3, 17^2, 19^2, 29^2, 31^2, 37^2,41^2\}$, the genus of $X_{0}(N)$ is large and computing explicit divisors in \texttt{Magma} is extremely inefficient. Instead, we use the analytic structure of the curve and its Jacobian, and project our shadow point into a (complex) subvariety of dimension $1$, $2$ or $3$. We give a short overview of the analytic definition of the Jacobian $J_{0}(N)$ and its modular subvarieties, see \cite[Chapter 6]{diamond2005dimension}  or \cite[Chapter 2]{cremona1997algorithms} for comprehensive descriptions.

Let $f_{1}, \ldots, f_{g}$ be a basis for $S_{2}(N)$, the space of weight 2 cusp forms for $\Gamma_{0}(N)$, and take $\gamma_{1}, \ldots, \gamma_{2g}$ to be a basis for the integral singular homology $H_{1}(X_{0}(N), \mathbb{Z})$. The \textit{period matrix} defined with respect to these bases is the $2g \times g$ complex matrix 
\begin{center}
    $\Omega = ( \omega_{jk}) = ( \langle \gamma_{j}, f_{k} \rangle)$
\end{center}
where $\langle \gamma_{j}, f_{k} \rangle = \int_{\gamma_{j}} 2 \pi i f_{k} (z) dz $. This integral is well-defined by the general theory of periods of cusp forms. The $2g$ rows of $\Omega$ are linearly independent over $\mathbb{R}$  and so their $\mathbb{Z}$-span defines a  lattice of rank $2g$ in $\mathbb{C}^{g}$, which we denote by $\Lambda$. The Jacobian $J_{0}(N)$ is simply the quotient $ \mathbb{C}^{g}/ \Lambda$.

Any cusp form $f \in S_{2}(N)$ has a Fourier expansion of the form
\begin{center}
    $f(\tau) = \displaystyle \sum_{n=1}^{\infty} a_{n} e^{2 \pi i \tau}$
\end{center}
where the coefficients $a_{n} \in \mathbb{C}$ are in fact algebraic integers. Let $g_{1}, \ldots, g_{d}$ be the set of Galois conjugates of $f$. The \textit{period lattice} of $f$ is the $\mathbb{Z}$-span of the vectors 
\begin{center}
    $( \langle \gamma_{i}, g_{1} \rangle, \ldots, \langle \gamma_{i}, g_{d} \rangle ) $
\end{center}
where $1 \le i \le 2g$.  This is a lattice of rank $2d$ in $ \mathbb{C}^{d}$, and the quotient $A_{f} = \mathbb{C}^{d}/ \Lambda$ is an abelian variety defined over $\mathbb{Q}$. Moreover, there is a natural quotient map 
\begin{center}
    $\varphi_{f} : J_{0}(N) \longrightarrow A_{f}$
\end{center}
which is also defined over $\mathbb{Q}$. Notably, when $d=1$, $A_{f}$ is simply the elliptic curve associated to $f$ and $\varphi_{f}$ is the modular parametrisation. 

For $N \in \{ 3^5, 17^2, 19^2, 29^2, 37^2,41^2\}$, we note that $S_{2}(N)$ contains a positive-rank newform $f$: of dimension $1$ for $N \in \{3^5, 17^2, 19^2, 37^2\}$; of dimension $2$ for $N \in \{29^2, 31^2 41^2\}$; and of dimension $3$ for $N = 7^3$. Let $D$ be a chosen scaled shadow which we assume to be torsion in $J_{0}(N)$ (we use the shadow corresponding to $T_{2}$, except for $7^3$ where we use $T_{3}$). Then, for any upper bound $B(f)$ of $\vert A_f(\Q)_{\tors} \vert$, $\varphi_{f}([B(f) \cdot D]) = 0$ and hence $B(f) \cdot D \in \Lambda_{f}$. 
Showing that this does not occur reduces to a finite computation as we now explain. 

Firstly, bounding $\vert A_f(\Q)_{\tors}\vert$ is elementary, as for any odd prime $l \nmid N$, reduction modulo $l$ induces an injection 
\begin{center}
    $A_f(\Q)_{\text{tors}} \longrightarrow A_f(\mathbb{F}_{l})$
\end{center}
see \cite[Appendix]{katz}. We may therefore take $B(N)$ to be the lowest common multiple of $\vert A_f(\mathbb{F}_{l})\vert$, as $l$ ranges over a finite set of odd primes coprime to $N$.  Moreover, the cardinality of $A_f (\mathbb{F}_{l})$ can be determined directly from the coefficients of $f$ as detailed in \cite[Section 3.5]{agashe2005visible}.

Let $D$ be $D' - C$, where $C$ is supported on the cusps and $D'$ is supported away from the cusps. The divisor $\tilde{D} = D' - \deg(D') c_{\infty}$, where $c_{\infty} \in X_0(N)$ is the infinity cusp, differs from $D$ by a divisor which is supported on cusps. By the theorems of Manin \cite{Manin} and  Drinfeld \cite{Drinfeld}, the difference between $D$ and $\tilde{D}$ is torsion, and so $D$ is of infinite order if and only if $\tilde{D}$ is. We therefore prove $\tilde{D}$ is non-torsion, equivalently, $B(f) \tilde{D} \notin \Lambda_f$.
Note that the CM points $\tau_{i} \in \mathbb{H}$ corresponding to the CM points $P_{i}$ in the support of the $D'$ can be explicitly determined in our examples.

Let  $D' = \sum_{i}a_{i}P_{i}$ and construct an integral basis $I$ of the subspace of $S_{2}(N)$ spanned by $f$ and its conjugates. In our calculations, $I$ has at most $3$ elements. Any element $h \in I$ has a Fourier expansion of the form 
\begin{center}
    $h(\tau) = \displaystyle \sum_{n \ge 0} b_{n}^{h} e^{2 \pi i \tau n}$
\end{center}
with $b_{i}^{h} \in \mathbb{Z}$; and we use this to verify that
\begin{center}
$\left( \displaystyle \sum_{i=1} a_{i} \int_{\infty}^{\tau_{i}} 2 \pi i h(z) dz : h \in I \right) \not \in \Lambda_{f}$.    
\end{center}
In particular, from Deligne's proof of the Weil conjectures \cite{deligne1974conjecture}, we deduce that $\vert C b_{n}^{h} \vert \leq d(n)n^{1/2}$, where $d(n)$ denotes the number of positive divisors of $n$ and $C$ is an explicit constant depending on how $h$ is expressed in terms of the basis of normalised eigenforms. We use this to bound the absolute value of the integral of the truncated series 
\begin{center}
 $h_{m} \left( \tau \right) = \displaystyle \sum_{n=1}^{m} b_{n}^{h} e^{2 \pi i \tau n}$
\end{center}
by the geometric series $\displaystyle \sum_{n=1}^{m} 2e^{2n \pi i \tau }$. Summing this geometric series we obtain an estimate for the error, and so we can deduce a choice of $m$ which guarantees convergence to within a chosen tolerance. This allows us to approximate the value of the integral. We compute the period lattice associated to $f$ to within the same tolerance using the \texttt{Magma} command  \texttt{Periods()}, see \cite{cremona1997algorithms} for details of this algorithm. The final step is a simple linear algebra computation which verifies that the point does not lie in the lattice. 

These calculations can often be time consuming due to the need to compute many terms of the Fourier expansions, and Hecke operator calculations become increasingly expensive. The worst case was $41^2$, which took over 12 hours due to needing the first 6600 terms of the expansion.

\section{Hecke Components of the Modified Diagonal} \label{heckestructure}
We specialise the results of Section \ref{endact} to the Hecke algebra $\mathbb{T}_{N}$ acting on $X= X_{0}(N)$. Throughout this section we  write $\Gamma^{3} \in CH_{1}(X_{0}(N)^{3})$ for the modified diagonal cycle, with base divisor $\frac{K_{C}}{2g -2}$.

Recall that $\mathbb{T}^{\otimes 3}$ naturally acts on $CH_{1}(X_{0}(N)^{3})$, and that for any $T \in \mathbb{T}_{N}$, we have an associated shadow point 
\begin{center}
    $\Sh(T) = \Sh( \Gamma^{3}, T) \in J_{0}(N)(\Q)$. 
\end{center}
As simple consequence of Proposition \ref{breakups} is the following. 
\begin{corollary} \label{heckact}
For any Hecke operators $\phi_{1}, \phi_{2}, 
\phi_{3}, T \in \mathbb{T}_{N}$, 
\begin{center}
    $\Sh( (\phi_{1}, \phi_{2}, \phi_{3})_{*}\Gamma^{3}, T) = \phi_{3} \cdot \Sh( \Gamma^{3}, \phi_{2} T \phi_{1}).$
\end{center}
\end{corollary}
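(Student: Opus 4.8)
The plan is to deduce Corollary~\ref{heckact} directly from Proposition~\ref{breakups} by iterating the three displayed identities, using that Hecke operators in $\mathbb{T}_N$ are self-dual correspondences. First I would record the key simplification: for any $T \in \mathbb{T}_N$ (generated by the $w_d$ and the $T_\ell$), we have $T^\vee = T$ as correspondences, as was observed for $w_d$ and $T_\ell$ in Section~\ref{heckops}. Hence in each bullet of Proposition~\ref{breakups}, every $\psi^\vee$ may be replaced by $\psi$ itself when $\psi \in \mathbb{T}_N$, and likewise the pushforward $\psi^\vee_*$ in the third bullet becomes $\psi_*$, which on $J_0(N)$ is exactly the action of $\psi$ as an endomorphism (this is how $\mathbb{T}_N \subset \operatorname{End}(J_0(N))$ is defined). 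This also uses that $\mathbb{T}_N$ is commutative, so $\phi_2 T \phi_1 = \phi_1 \phi_2 T = \dots$ and the order in which we peel off the factors does not matter.

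Next I would apply the three identities in succession. Starting from $\Sh((\phi_1,\phi_2,\phi_3)_* \Gamma^3, T)$, apply the third bullet with $Z = (\phi_1,\phi_2,\mathrm{id})_*\Gamma^3$ and $\psi = \phi_3$ to pull $\phi_3$ out as $\phi_3^\vee{}_* = \phi_{3,*} = \phi_3\cdot$ in front: this gives $\phi_3 \cdot \Sh((\phi_1,\phi_2,\mathrm{id})_*\Gamma^3, T)$. Then apply the second bullet with $Z = (\phi_1,\mathrm{id},\mathrm{id})_*\Gamma^3$ and $\psi = \phi_2$ to get $\phi_3 \cdot \Sh((\phi_1,\mathrm{id},\mathrm{id})_*\Gamma^3, \phi_2^\vee \circ T) = \phi_3 \cdot \Sh((\phi_1,\mathrm{id},\mathrm{id})_*\Gamma^3, \phi_2 T)$. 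Finally apply the first bullet with $\psi = \phi_1$ to reach $\phi_3 \cdot \Sh(\Gamma^3, \phi_2 T \circ \phi_1) = \phi_3 \cdot \Sh(\Gamma^3, \phi_2 T \phi_1)$, as claimed. One small point to check is that the actions $(\phi_1,\mathrm{id},\mathrm{id})_*$, $(\mathrm{id},\phi_2,\mathrm{id})_*$, $(\mathrm{id},\mathrm{id},\phi_3)_*$ commute with one another on $CH_1(X^3)$, so that $(\phi_1,\phi_2,\phi_3)_* = (\mathrm{id},\mathrm{id},\phi_3)_*(\mathrm{id},\phi_2,\mathrm{id})_*(\phi_1,\mathrm{id},\mathrm{id})_*$; this is immediate since they act on disjoint coordinates.

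I do not expect a serious obstacle here — the corollary is essentially a bookkeeping consequence of Proposition~\ref{breakups}. If anything, the only subtlety worth spelling out is the identification of the correspondence-dual $\psi^\vee$ with $\psi$ for Hecke operators and the identification of the pushforward $\psi_*$ on divisor classes with the endomorphism action of $\psi$ on $J_0(N)$; both are standard (the Hecke correspondence on $X_0(N)$ is symmetric via the two degeneracy maps, cf.\ Section~\ref{heckops}), but stating them explicitly makes the proof clean. With those in hand the result follows by three applications of the proposition.
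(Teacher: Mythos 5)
Your proof is correct and is exactly the argument the paper intends: the corollary is stated without proof as a "simple consequence" of Proposition~\ref{breakups}, obtained by peeling off the three factors via the three identities and using that $\psi^\vee = \psi$ for elements of the commutative Hecke algebra (established for the generators $w_d$ and $T_\ell$ in Section~\ref{heckops}). Your explicit attention to the self-duality and to the commuting of the coordinate-wise pushforwards supplies precisely the bookkeeping the paper leaves implicit.
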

By diagonalising the Hecke action on $CH_{1}(X_{0}(N)^{3})$, we decompose the modified diagonal cycle into Hecke isotypic components 
\begin{center}
    $\Gamma^{3} = \displaystyle \sum_{f,g,h} \Gamma_{fgh}^{3}$
\end{center}
where the sum is taken over weight $2$ cusp forms of level $\Gamma_{0}(N)$. As $\Sh$ is linear, we obtain a corresponding decomposition of shadow points 
\begin{center}
    $\Sh(\Gamma^{3}, T) = \displaystyle \sum_{f,g,h} \Sh( \Gamma^{3}_{fgh}, T)$
\end{center}
for any $T \in \mathbb{T}_{N}$. A natural question is to ask which of these components is non-trivial. 
\begin{proposition}
 Let $f,g,h \in \mathcal{S}_{2}(\Gamma_{0}(N)$. If $f \ne g$, then $\Sh(\Gamma^3_{fgh}, T) =0 $ for all $T \in \mathbb{T}$. 
\end{proposition}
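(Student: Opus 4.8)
The plan is to realise the isotypic component $\Gamma^3_{fgh}$ as the image of $\Gamma^3$ under a product of Hecke idempotents and then push the question through the functoriality of shadows recorded in Proposition~\ref{breakups}, equivalently Corollary~\ref{heckact}. Once this is in place the vanishing is purely formal, coming from the commutativity of $\mathbb{T}_N$ together with the orthogonality of the idempotents attached to distinct eigensystems.

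First I would make the decomposition explicit. The algebra $\mathbb{T}_N \otimes \mathbb{C}$ is commutative and semisimple — it is spanned by the commuting, simultaneously diagonalisable operators $T_\ell$ ($\ell \nmid N$), self-adjoint for the Petersson pairing, together with the Atkin--Lehner involutions $w_d$ — so it has primitive idempotents $e_f \in \mathbb{T}_N \otimes \mathbb{C}$, one for each eigensystem, with $e_f e_g = 0$ for $f \neq g$ and $\sum_f e_f = 1$. In the notation of Section~\ref{endact}, the operator $e_f \otimes e_g \otimes e_h$ acts on $CH_1(X_0(N)^3) \otimes \mathbb{C}$ as $(e_f, e_g, e_h)_*$, so that $\Gamma^3_{fgh} = (e_f, e_g, e_h)_* \Gamma^3$ and summing over all triples recovers the decomposition quoted just before the proposition. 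I would also record that every element of $\mathbb{T}_N$, hence every $e_f$, is self-dual: this holds for the generators $T_\ell$ and $w_d$ by the discussion of Section~\ref{heckops}, and transposition of correspondences is an anti-automorphism fixing those generators, hence the identity on the commutative algebra they generate.

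With this set up, Corollary~\ref{heckact} (extended by $\mathbb{C}$-linearity from $\mathbb{T}_N$ to $\mathbb{T}_N \otimes \mathbb{C}$) gives, for every $T \in \mathbb{T}_N$,
\[
\Sh(\Gamma^3_{fgh}, T) \;=\; \Sh\big((e_f, e_g, e_h)_* \Gamma^3,\, T\big) \;=\; e_h \cdot \Sh\big(\Gamma^3,\, e_g\, T\, e_f\big).
\]
Since $\mathbb{T}_N \otimes \mathbb{C}$ is commutative, $e_g\, T\, e_f = T\, e_g e_f$, which vanishes as soon as $f \neq g$. And $\Sh(\Gamma^3, -)$ is linear in its second argument — it is, by definition, a fixed scalar multiple of the transform $\phi \mapsto \Gamma^3_* X_\phi$, and $\phi \mapsto X_\phi$ is additive — so $\Sh(\Gamma^3, 0) = 0$. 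Therefore $\Sh(\Gamma^3_{fgh}, T) = e_h \cdot 0 = 0$, which is the assertion.

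I do not anticipate a genuine difficulty; given Proposition~\ref{breakups} the argument is formal and rests only on commutativity of the Hecke algebra and orthogonality of its primitive idempotents. The points needing care are bookkeeping: verifying that the abstract projector $e_f \otimes e_g \otimes e_h$ acts on $CH_1(X_0(N)^3) \otimes \mathbb{C}$ precisely as the composite $(e_f, e_g, e_h)_*$ compatibly with the identities of Proposition~\ref{breakups}, and confirming the self-duality of the middle idempotent so that $e_g\, T\, e_f$ is literally a product inside the commutative algebra $\mathbb{T}_N \otimes \mathbb{C}$. Running the same computation with the roles of the three factors permuted, using the $S_3$-symmetry of $\Gamma^3$, should strengthen the conclusion to: $\Sh(\Gamma^3_{fgh}, T)$ is nonzero only when $f = g = h$.
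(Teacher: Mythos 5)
Your argument is correct and is essentially the paper's proof in different packaging: the paper applies Corollary~\ref{heckact} to $(T_q,1,1)_*\Gamma^3_{fgh}$ and $(1,T_q,1)_*\Gamma^3_{fgh}$, which both equal $\Sh(\Gamma^3_{fgh},T_qT)$ by commutativity and self-duality of $T_q$ yet scale the shadow by $a_f(q)$ and $a_g(q)$ respectively, forcing $a_f(q)=a_g(q)$ for all $q$ whenever the shadow is non-zero. Your idempotent formulation ($e_g e_f=0$) feeds exactly the same inputs --- Corollary~\ref{heckact}, commutativity of $\mathbb{T}_N$, and self-duality --- through the orthogonality of projectors instead of an eigenvalue comparison, so it is a valid but not genuinely different route.
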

\begin{proof}
 For any prime $q$, not dividing $N$, we have 
 \begin{center}
     $\Sh( (T_{q},1,1)_{*} \Gamma^3_{fgh}, T) = T_{q} \Sh(\Gamma^3_{fgh}, T) = a_{f}(q)\Sh(\Gamma^3_{fgh}, T).$
 \end{center}
 Equivalently, by definition
 \begin{center}
     $\Sh( (T_{q},1,1)_{*} \Gamma^3_{fgh}, T) = \Sh( \Gamma^3_{fgh}, T_{q}T) = \Sh( (1, T_{q},1)_{*} \Gamma^3_{fgh}, T) = a_{g}(q) \Sh( \Gamma^3_{fgh}, T).$
 \end{center}
Thus if $\Sh( \Gamma^3_{fgh}, T) \neq 0$, $a_f(q) = a_g(q)$ for all $q$, and so $f =g$.
\end{proof}
We note that this does not force the vanishing of $\Gamma^3_{fgh}$ for $f \neq g$, simply that this does not contribute to shadows.

A special case occurs in the case of an Atkin-Lehner operator $w_{d}$ and the diagonal $T = \Delta$ on $X_{0}(N)^{2}$, i.e the trivial element in $\mathbb{T}_{N}$. 
As $w_{d} \otimes w_{d} \otimes w_{d}$ acts trivially on $\Gamma^{3}$, we decompose $\Gamma^{3}$ according to the sign with which $w_{d}$ acts on $\Gamma^{3}$
\begin{center}
    $\Gamma^{3} = \Gamma^{3}_{+++} + \Gamma^{3}_{+--} + \Gamma^{3}_{-+-} + \Gamma^{3}_{--+}$.
\end{center}

\begin{proposition}
If $\Sh(\Gamma^{3}, w_{d}) \ne 0$, then  $\Sh( \Gamma^{3}_{+++}, \Delta) \ne 0$ and $\Sh( \Gamma^{3}_{--+}, \Delta) \ne 0$. Moreover, all 4 Atkin-Lehner components of $\Gamma^3$ are non-zero. Conversely, if $\Sh(\Gamma^{3}, w_{d}) =0$ then $ \Sh( \Gamma^{3}_{+++}, \Delta) =0 $ and $ \Sh( \Gamma^{3}_{--+}, \Delta) =0$.
\end{proposition}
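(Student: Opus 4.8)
The plan is to reduce the whole statement to the two identities $\Sh(\Gamma^{3}_{+++},\Delta) = \tfrac{1}{2}\Sh(\Gamma^{3},w_{d})$ and $\Sh(\Gamma^{3}_{--+},\Delta) = -\tfrac{1}{2}\Sh(\Gamma^{3},w_{d})$, which I would obtain by pushing the Atkin--Lehner idempotents through Corollary~\ref{heckact}. Write $P_{\pm} = \tfrac{1}{2}(1 \pm w_{d})$ for the two orthogonal idempotents in $\mathbb{T}_{N}\otimes\mathbb{Q}$ cutting out the $\pm 1$-eigenspaces of $w_{d}$, so that the four Atkin--Lehner components are $\Gamma^{3}_{\epsilon_{1}\epsilon_{2}\epsilon_{3}} = (P_{\epsilon_{1}},P_{\epsilon_{2}},P_{\epsilon_{3}})_{*}\Gamma^{3}$, with $\epsilon_{1}\epsilon_{2}\epsilon_{3} = +$ forced because $w_{d}^{\otimes 3}$ acts trivially on $\Gamma^{3}$. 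Since $w_{d}$ is an involution we have $w_{d}^{\vee} = w_{d}$, hence $P_{\pm}^{\vee} = P_{\pm}$, $P_{\pm}^{2} = P_{\pm}$ and $P_{+}P_{-} = 0$.

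The first step is to apply Corollary~\ref{heckact} with $T = \Delta$ and $(\phi_{1},\phi_{2},\phi_{3}) = (P_{\epsilon_{1}},P_{\epsilon_{2}},P_{\epsilon_{3}})$, giving
\[
\Sh(\Gamma^{3}_{\epsilon_{1}\epsilon_{2}\epsilon_{3}},\Delta) = P_{\epsilon_{3}}\cdot\Sh\bigl(\Gamma^{3},\,P_{\epsilon_{2}}P_{\epsilon_{1}}\bigr).
\]
If $\epsilon_{1}\neq\epsilon_{2}$ the right-hand side is $0$ because $P_{\epsilon_{2}}P_{\epsilon_{1}} = 0$; this disposes of the $+--$ and $-+-$ components and shows their $\Delta$-shadows vanish unconditionally. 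If $\epsilon_{1} = \epsilon_{2}$ we get $P_{\epsilon_{3}}\cdot\Sh(\Gamma^{3},P_{\epsilon_{1}})$. Now the shadow is linear in its correspondence argument (being $(2g-2)\Gamma^{3}_{*}X_{\phi}$ with $\phi\mapsto X_{\phi}$ extended linearly), and $\Sh(\Gamma^{3},\Delta) = 0$ since $\Delta$ is the identity endomorphism, whose shadow is $(2g-2)e - K_{C} = 0$ (the remark after the first proposition of Section~\ref{CHpoints}); hence $\Sh(\Gamma^{3},P_{\pm}) = \pm\tfrac{1}{2}\Sh(\Gamma^{3},w_{d})$. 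Finally $\Sh(\Gamma^{3},w_{d})$ is $w_{d}$-invariant: from $(w_{d},w_{d},w_{d})_{*}\Gamma^{3} = \Gamma^{3}$ together with Corollary~\ref{heckact} and $w_{d}^{3} = w_{d}$ one reads off $w_{d}\cdot\Sh(\Gamma^{3},w_{d}) = \Sh(\Gamma^{3},w_{d})$ (alternatively, directly from the divisor $\Sh(w_{d})$, supported on $w_{d}$-fixed points and canonical classes). Therefore $P_{+}$ acts as the identity on $\Sh(\Gamma^{3},w_{d})$, and
\[
\Sh(\Gamma^{3}_{+++},\Delta) = \tfrac{1}{2}\Sh(\Gamma^{3},w_{d}), \qquad \Sh(\Gamma^{3}_{--+},\Delta) = -\tfrac{1}{2}\Sh(\Gamma^{3},w_{d}).
\]
Both the implication and its converse in the statement follow immediately.

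For the ``moreover'' clause I would argue as follows. Assume $\Sh(\Gamma^{3},w_{d})\neq 0$. The two displayed identities force $\Gamma^{3}_{+++}\neq 0$ and $\Gamma^{3}_{--+}\neq 0$ in $\CH_{1}(X_{0}(N)^{3})$, since a vanishing cycle has vanishing shadow. The modified diagonal $\Gamma^{3}$ is symmetric under the $S_{3}$-action on the three factors (the small diagonal, the three terms $\pr_{i,j}^{*}(\Delta_{C})\cdot\pr_{k}^{*}e$, and the three terms $\pr_{i,j}^{*}(e\times e)$ are each permuted among themselves), and this action on $\CH_{1}(X_{0}(N)^{3})$ permutes the idempotent components accordingly; hence $\Gamma^{3}_{--+}$, $\Gamma^{3}_{-+-}$ and $\Gamma^{3}_{+--}$ are interchanged by automorphisms of the Chow group, so all three are non-zero. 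Together with $\Gamma^{3}_{+++}$ this covers all four.

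The bookkeeping is routine once Corollary~\ref{heckact} is in hand; the two inputs that need care are the vanishing $\Sh(\Gamma^{3},\Delta) = 0$ and, the genuinely delicate point, the $w_{d}$-invariance of $\Sh(\Gamma^{3},w_{d})$. It is precisely this invariance that promotes the bare statement ``non-zero iff'' to the clean factor-of-$\tfrac{1}{2}$ identities and makes the $S_{3}$-symmetry argument for the last two components available.
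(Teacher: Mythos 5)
Your proof is correct and is essentially the paper's argument: the paper derives the same identities $\Sh(\Gamma^{3}_{+++},\Delta)=-\Sh(\Gamma^{3}_{--+},\Delta)$ and $\Sh(\Gamma^{3},w_{d})=\Sh(\Gamma^{3}_{+++},\Delta)-\Sh(\Gamma^{3}_{--+},\Delta)$ by writing out the four linear relations obtained from applying $w_{d}$ in each slot and solving, which is exactly the diagonalisation your idempotents $P_{\pm}$ perform, and it disposes of the remaining two components by the same $S_{3}$-symmetry observation. The key inputs --- Corollary~\ref{heckact}, the vanishing $\Sh(\Gamma^{3},\Delta)=0$, and the $w_{d}$-invariance of shadow points --- are identical in both arguments.
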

\begin{proof}
Using the above decomposition we obtain 
\begin{align*}
    & \Sh( ( w_{d} \otimes 1 \otimes 1)_{*}\Gamma^{3}, \Delta) =  \Sh( \Gamma^{3}_{+++}, \Delta) + \Sh( \Gamma^{3}_{+--}, \Delta) - \Sh( \Gamma^{3}_{-+-}, \Delta) - \Sh( \Gamma^{3}_{--+}, \Delta); \\
    & \Sh( ( 1 \otimes  w_{d}  \otimes 1)_{*}\Gamma^{3}, \Delta) =  \Sh( \Gamma^{3}_{+++}, \Delta) -  \Sh( \Gamma^{3}_{+--}, \Delta) +  \Sh( \Gamma^{3}_{-+-}, \Delta) - \Sh( \Gamma^{3}_{--+}, \Delta). 
    \end{align*}
 From Corollary \ref{heckact}, we deduce that both of the above divisors are $\Sh(\Gamma^{3}, w_{d})$ and thus 
 \begin{center}
     $\Sh( \Gamma^{3}, w_{d}) = \Sh( \Gamma^{3}_{+++}, \Delta) - \Sh( \Gamma^{3}_{--+}, \Delta)$
 \end{center}
 Moreover, we have
    \begin{align*}
   & \Sh( ( 1  \otimes 1 \otimes w_{d} )_{*}\Gamma^{3}, \Delta) =  \Sh( \Gamma^{3}_{+++}, \Delta) -  \Sh( \Gamma^{3}_{+--}, \Delta) -   \Sh( \Gamma^{3}_{-+-}, \Delta) +  \Sh( \Gamma^{3}_{--+}, \Delta);
    \\
    & \Sh( \Gamma^{3}, \Delta) =  \Sh( \Gamma^{3}_{+++}, \Delta) +  \Sh( \Gamma^{3}_{+--}, \Delta) +  \Sh( \Gamma^{3}_{-+-}, \Delta) +  \Sh( \Gamma^{3}_{--+}, \Delta);
    \end{align*}
and since shadow points are invariant under $w_{d}$ and $\Sh( \Gamma^{3}, \Delta) =0$, 
\begin{center}
    $ \Sh( \Gamma^{3}_{+++}, \Delta)  = -  \Sh( \Gamma^{3}_{--+}, \Delta);$
\end{center}
and the claim follows for $\Gamma^3_{+++}$ and $\Gamma^3_{--+}$. The claim for the other two components follows by the invariance of $\Gamma^3$ under permutation of the factors.
\end{proof}

One motivation for studying this decomposition is the following conjecture of Gross and Kudla. For any three weight $2$ cusps forms $f,g,h$ of level $\Gamma_{0}(N)$, with $N$ squarefree, consider $F = f \times g \times h $ on $\mathbb{H}^{3}$. The associated $L$-function has been classically studied by Garrett \cite{garrett1987decomposition} and Piatetski-Shapiro and Rallis \cite{piatetski1987rankin}, in particular it is centered at $s=2$. Associated to this setting, there is a canonically defined Shimura curve $X$, and as before we write $\Gamma^{3}$ for the modified diagonal cycle attched to $X$. Gross and Kudla \cite{gross1992heights} conjecture the following. 
\begin{conjecture} With notation as above
\begin{center}
 $L'(F,2) = \Omega(F) \langle \Gamma^{3}_{F}, \Gamma^{3}_{F} \rangle_{BB} $   
 \end{center}
 where $\Omega(F)$ is an explicit,non-zero constant, $\Gamma^{3}_{F}$ is the $F$-isotypic component of $\Gamma^{3}$ and $\langle, \rangle_{BB}$ is the Bloch-Beilinson height pairing.
\end{conjecture}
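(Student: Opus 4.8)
This statement is the triple-product analogue of the Gross--Zagier height formula, and --- like the latter --- no unconditional proof is known; the natural plan, following the strategy of \cite{yuanzhangzhang}, is to express each side of the identity as a sum of local terms and to match them place by place. Since the Bloch--Beilinson pairing $\langle\,,\rangle_{BB}$ is itself only conditionally defined, the proposal is necessarily contingent on the standard conjectures that make it well-behaved; granting those, the work is to carry out the analytic and geometric computations and to compare the resulting local contributions.

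\textbf{Analytic side.} I would use the Garrett / Piatetski-Shapiro--Rallis integral representation: after suitable normalization, $L(F,s)$ is the Petersson product of $f\otimes g\otimes h$ with the pullback along the diagonal $X\hookrightarrow X^3$ of a Siegel--Eisenstein series $E(Z,s)$ on $\mathrm{Sp}_6$. In the regime governed by the conjecture the global root number is $-1$ --- this is the analogue of the Heegner hypothesis, and is exactly what makes a cycle, rather than a point, the natural object --- so $L(F,2)=0$, and $L'(F,2)$ is read off from $E'(Z,2)$, whose Fourier coefficients decompose into local Whittaker and orbital integrals via the Siegel--Weil formula in the first-term range.

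\textbf{Geometric side.} I would compute $\langle \Gamma^3_F,\Gamma^3_F\rangle_{BB}$ by arithmetic intersection theory on a regular integral model of $X^3$. After replacing one copy of $\Gamma^3$ by a Hecke correspondent $T_m\Gamma^3$ to force proper intersection, the height decomposes as $\sum_v (\Gamma^3_F, T_m\Gamma^3_F)_v\log q_v$ over the finite places together with an archimedean contribution given by a $\star$-product of Green's currents on $X^3(\mathbb{C})$. The non-archimedean terms are evaluated on the special fibers via the Deligne--Rapoport and Cerednik--Drinfeld descriptions of the bad reduction of $X$; this turns them into counts of supersingular points with level structure and into orbital integrals over the definite quaternion algebra attached to $X$ --- precisely the local integrals appearing in $E'(Z,2)$. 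One then matches the two sides term by term, identifying the archimedean $\star$-product integral with the archimedean derivative of $E(Z,s)$.

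\textbf{Main obstacle.} The serious difficulty is on the geometric side, and it is why the statement remains a conjecture. First, $\langle\,,\rangle_{BB}$ requires a well-defined height pairing on the null-homologous part of $\CH_1(X^3)$ and finiteness of the relevant arithmetic intersection numbers, neither of which is known in this generality. Second --- and even granting the pairing --- the archimedean local comparison, namely matching an explicit integral of Green's currents against an automorphic kernel on $X^3(\mathbb{C})$ with the archimedean term of $E'(Z,2)$ (a ``local arithmetic Siegel--Weil'' identity), is a substantial analytic problem established so far only in special cases. The realistic plan is therefore: assume the conjectures that make $\langle\,,\rangle_{BB}$ well-behaved, reduce the identity to a finite list of local comparisons, and verify each using the Siegel--Weil formula.
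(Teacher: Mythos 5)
The statement you were asked to prove is the Gross--Kudla conjecture, and the paper does not prove it: it appears verbatim as a conjecture, attributed to \cite{gross1992heights}, and the authors' contribution in Section 7 is only to supply evidence consistent with it. Concretely, for the bielliptic curves $X_0(p)$ with $p \in \{43,53,61,79,83,89,101,131\}$ they show that the shadows $\Sh(\Gamma^3_{ggf_p},\Delta)$ are non-zero and that the corresponding central derivatives $L'(g\otimes g\otimes f_p,2)$ are non-zero (the latter via the factorisation $L(g\otimes g\otimes f_p,s)=L(f_p\otimes \mathrm{Sym}^2 g,s)\,L(f_p,s-1)$ and a \texttt{Magma} computation), together with an explicit analysis of $X_0(67)$. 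Your write-up correctly recognises that no unconditional proof exists and gives an accurate sketch of the Yuan--Zhang--Zhang-style strategy (the Garrett / Piatetski-Shapiro--Rallis integral representation, the derivative of the Siegel--Eisenstein series, arithmetic intersection theory on integral models, and place-by-place comparison of local terms), and you are honest about the two genuine obstructions: the conditional definition of the Bloch--Beilinson pairing and the archimedean local comparison. That is a fair survey of how one would attack the conjecture, but it is a programme rather than a proof, and it does not correspond to anything the paper actually does; the paper's only engagement with the conjecture is the numerical consistency checks just described, in which both sides of the conjectured identity are verified to be simultaneously non-vanishing (or, for the $+++$ components of $X_0(67)$, simultaneously vanishing) in the listed examples.
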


 There has been some recent progress towards proving this conjecture, see \cite{yuanzhangzhang}. We provide some examples to support this conjecture where we prove the non-vanishing of $\Gamma^3_F$ and $L'(F, 2) \neq 0$. For instance, suppose that $X = X_{0}(p)$ is bielliptic with non-vanishing Ceresa cycle, that is, $ p \in \{ 43, 53, 61, 79, 83, 89, 101, 131 \}$ (see \cite{bars1999bielliptic}). For all such levels there is a unique $f_{p} \in \mathcal{S}_{2}(p)$ with Atkin-Lehner sign $1$ and in the following two results we exploit this to deduce that $ \Sh( \Gamma^{3}_{ggf_{p}}, \Delta) \ne 0$ for all $g \in \mathcal{S}_{2}(p)$, $ g \ne f_{p}$.

\begin{proposition}
 Let $X = X_{0}(p)$ with $p \in \{ 43, 53, 61, 79, 83, 89, 101, 131 \}$. Then $\Sh( \Gamma^{3}, w_{p}) =0$. In particular, $ \Gamma^3_{f_{p}f_{p}f_{p}}$ does not contribute to shadow points.
\end{proposition}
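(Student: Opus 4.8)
The plan is to reduce the vanishing of $\Sh(\Gamma^3, w_p)$ to the fact that the quotient $X_0^+(p)$ is an elliptic curve, via the Riemann--Hurwitz formula, and then to deduce the ``in particular'' clause from the preceding proposition after identifying the $w_p$-eigencomponent $\Gamma^3_{+++}$ with the $f_p$-isotypic component.

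First I would record that $g\big(X_0^+(p)\big) = \dim \mathcal{S}_2(\Gamma_0(p))^{w_p = +1} = 1$ for each of the eight listed primes, since by hypothesis $f_p$ is the unique newform with $w_p$-eigenvalue $+1$. Next, $\pi \colon X_0(p) \to X_0^+(p)$ is the quotient by an involution, hence a tamely ramified double cover in characteristic $0$, whose ramification divisor $R$ is the reduced divisor of fixed points of $w_p$; by Ogg's description recalled in Section~\ref{heckops} this is exactly $R = F_{w_p} = D_p + D_{4p}$, each point occurring with multiplicity $1$. Applying the Riemann--Hurwitz formula for canonical classes gives $K_{X_0(p)} = \pi^* K_{X_0^+(p)} + R$; since $X_0^+(p)$ is an elliptic curve, $K_{X_0^+(p)} \sim 0$, whence $K_{X_0(p)} \sim F_{w_p}$ (and in particular $\deg F_{w_p} = \deg K_{X_0(p)} = 2g-2$, with $g$ the genus of $X_0(p)$). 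Plugging into the shadow formula for an Atkin--Lehner involution from Section~\ref{heckops},
\[
\Sh(\Gamma^3, w_p) \;=\; (2g-2)\,F_{w_p} - \deg(F_{w_p})\,K_{X_0(p)} \;=\; (2g-2)\big(F_{w_p} - K_{X_0(p)}\big) \;=\; 0,
\]
which is the first assertion.

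For the ``in particular'', by the preceding proposition the vanishing $\Sh(\Gamma^3, w_p) = 0$ forces $\Sh(\Gamma^3_{+++}, \Delta) = 0$, where the signs record the action of $w_p \otimes w_p \otimes w_p$. Since $f_p$ is the \emph{only} newform of $w_p$-sign $+1$, the $(+,+,+)$-eigencomponent of $\Gamma^3$ is precisely its $f_p$-isotypic component, so $\Sh(\Gamma^3_{f_p f_p f_p}, \Delta) = 0$. Finally, for any $T \in \mathbb{T}$, applying Proposition~\ref{breakups} to $(\psi, \mathrm{id}, \mathrm{id})$ with $\psi = T$, and using that $\mathbb{T}$ acts on the $f_p$-isotypic component through a single character $\chi_{f_p}$, gives $\Sh(\Gamma^3_{f_p f_p f_p}, T) = \chi_{f_p}(T)\,\Sh(\Gamma^3_{f_p f_p f_p}, \Delta) = 0$; thus $\Gamma^3_{f_p f_p f_p}$ contributes nothing to any shadow point.

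The step most worth care --- rather than a deep obstacle --- is the bookkeeping in the last paragraph: one must check that the coarse $w_p$-eigenspace splitting of $\Gamma^3$ genuinely isolates $\Gamma^3_{f_p f_p f_p}$, which relies on the uniqueness of the sign-$+1$ form and is therefore special to the eight primes in question, and that $\pi$ is unramified away from the $w_p$-fixed locus so that $R = F_{w_p}$ exactly. Granting these, everything reduces to Riemann--Hurwitz together with the triviality of the canonical class of an elliptic curve.
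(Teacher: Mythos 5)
Your proof is correct and follows essentially the same route as the paper: the quotient $X_0^+(p)$ is an elliptic curve, so Riemann--Hurwitz gives $K_{X_0(p)} \sim F_{w_p}$ with $\deg F_{w_p} = 2g-2$, forcing the Atkin--Lehner shadow formula to vanish, and the ``in particular'' clause then follows from the preceding proposition together with the uniqueness of the sign-$+1$ form. The paper states this more tersely but the content is identical; your extra bookkeeping on the Hecke action is a harmless elaboration.
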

\begin{proof}
By the previous proposition, it suffices to show $ \Sh(\Gamma^{3},w_{p}) =0$. This follows from our definition of $ \Sh(\Gamma^{3}, w_{p})$ and the fact that the canonical divisor is the sum of the fixed points of $ w_{p}$, since the curve is bi-elliptic. 
\end{proof}
In fact, $\Gamma^3_{f_p f_p f_p} = 0$ as it is the pullback of the diagonal cycle on the elliptic curve, but for our purposes the statement of the proposition suffices. 

\begin{proposition}
    Let $X = X_{0}(p)$ and $f_{p}$ be as above. Then $\Sh(\Gamma^{3}_{ggf_{p}}, \Delta)  \ne 0$ for all cusps forms $g$ with Atkin-Lehner sign $-1$.
\end{proposition}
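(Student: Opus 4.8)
The plan is to reduce the claim to the non-vanishing of the $T_2$-shadow that is already proved in the course of establishing Theorem~\ref{THM1}, by combining the Atkin--Lehner decomposition of $\Gamma^3$ with the Hecke-equivariance of shadows recorded in Corollary~\ref{heckact}. First I would note that, since $f_p$ is the \emph{unique} weight-$2$ form of Atkin--Lehner sign $+1$, in the decomposition $\Gamma^3 = \Gamma^3_{+++}+\Gamma^3_{+--}+\Gamma^3_{-+-}+\Gamma^3_{--+}$ one has $\Gamma^3_{+++} = \Gamma^3_{f_pf_pf_p} = 0$, and, because $\Sh(\Gamma^3_{fgh},T)=0$ whenever $f\neq g$ (shown above), $\Sh(\Gamma^3_{--+},\Delta) = \sum_{g}\Sh(\Gamma^3_{ggf_p},\Delta)$, the sum over sign-$(-1)$ eigenforms $g$. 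By Corollary~\ref{heckact} each summand is annihilated by $T_q - a_{f_p}(q)$ for all primes $q\nmid p$, so it lies in the $f_p$-isotypic factor $A_{f_p}$ of $J_0(p)$, which (as $f_p$ is the only sign-$+1$ form) is isogenous to the elliptic curve $J_0^+(p)=\mathrm{Jac}(X_0^+(p))$.

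Second, I would feed a Hecke operator into the third coordinate: Corollary~\ref{heckact} gives, for any prime $\ell\nmid p$,
\[
\pi_{f_p,*}\,\Sh(\Gamma^3,T_\ell) \;=\; \sum_{g}a_g(\ell)\,\Sh(\Gamma^3_{ggf_p},\Delta),
\]
the sum over sign-$(-1)$ eigenforms $g$ (the term $g=f_p$ is absent because $\Gamma^3_{f_pf_pf_p}=0$), where $\pi_{f_p,*}$ denotes projection onto $A_{f_p}$, equivalently the pushforward to $J_0^+(p)$. For each of the eight (bi-elliptic, hence non-hyperelliptic) levels in question, the proof of Theorem~\ref{THM1} shows that a non-zero rational multiple of $\Sh(T_2)$ has non-zero image in $J_0^+(p)(\mathbb{Q})$ — via Proposition~\ref{prop:CMred} or Proposition~\ref{prop:triv} applied to the special fibre of the model of $X_0^+(p)$ — and this image is in fact of infinite order since $J_0^+(p)(\mathbb{Q})_{\tors}=0$ by Theorem~\ref{M2} (these $X_0^+(p)$ have genus $1$). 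Hence $\pi_{f_p,*}\Sh(\Gamma^3,T_2)\neq 0$.

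Third, I would use the shape of the isogeny decomposition of $J_0(p)$: for every $p$ in the list the sign-$(-1)$ part of $S_2(\Gamma_0(p))$ is a single Galois (equivalently Hecke) orbit — a finite verification from the new-form data — so that the points $\Sh(\Gamma^3_{g'g'f_p},\Delta)$, for $g'$ ranging over that orbit, are Galois conjugates of one another; in particular one of them is zero if and only if they all are. If they all vanished, the displayed identity with $\ell=2$ would give $\pi_{f_p,*}\Sh(\Gamma^3,T_2)=0$, contradicting the previous step. Therefore $\Sh(\Gamma^3_{ggf_p},\Delta)\neq 0$ for every cusp form $g$ of sign $-1$; and since $\Sh(\Gamma^3_{ggf_p},\Delta)=(2g-2)(\Gamma^3_{ggf_p})_*\Delta_{X_0(p)}$, this also forces $\Gamma^3_{ggf_p}\neq 0$ in $\CH_1(X_0(p)^3)$, as desired for the $L$-function application.

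The hard part will be the third step in the hypothetical situation where the sign-$(-1)$ forms split into several Galois orbits: then the non-vanishing of the total $T_2$-shadow only certifies that \emph{some} orbit contributes, and since the common target $A_{f_p}\otimes\overline{\mathbb{Q}}$ is one-dimensional, the orbit contributions cannot be separated by Hecke linear algebra alone. In that situation one would instead, for each orbit $[g_i]$, apply the reduction-modulo-$p$ or period-lattice arguments of Sections~\ref{proofof1}--\ref{proofof2} to the rational point $\pi_{f_p,*}\Sh(\Gamma^3,\pi_{[g_i]}T_\ell)\in J_0^+(p)(\mathbb{Q})$ for a suitable prime $\ell$, using the rational orbit-projector $\pi_{[g_i]}\in\mathbb{T}_p\otimes\mathbb{Q}$. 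For the eight levels at hand the single-orbit structure makes this unnecessary, so the argument stays uniform.
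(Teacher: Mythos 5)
Your strategy is essentially the paper's: use the Hecke-equivariance of Corollary~\ref{heckact} to write $\pi_{f_p,*}\Sh(\Gamma^{3},T_{\ell})=\sum_{g}a_{g}(\ell)\,\Sh(\Gamma^{3}_{ggf_{p}},\Delta)$ (sum over sign $-1$ eigenforms, the $f_p$-term dying by the previous proposition), feed in the non-vanishing of $\Sh(T_{2})$ in $J_0^{+}(p)$ from the proof of Theorem~\ref{THM1}, and then argue that no individual summand can vanish. Your first two steps are correct, and making the projection $\pi_{f_p,*}$ onto $A_{f_p}\sim J_0^{+}(p)$ explicit is a genuine clarification of what the displayed identity means.

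The gap is in your third step. The assertion that for every $p$ in the list the sign $-1$ part of $S_{2}(\Gamma_{0}(p))$ is a single Galois orbit is false for $p=89$: there the minus eigenspace has dimension $6$ and splits as a one-dimensional rational newform (the rank-zero elliptic curve of conductor $89$, which has Atkin--Lehner sign $-1$) together with a five-dimensional Galois orbit. With two orbits, Galois conjugacy only yields an all-or-nothing statement \emph{within} each orbit, and a single relation at $\ell=2$ only certifies that at least one orbit contributes to $\pi_{f_p,*}\Sh(\Gamma^{3},T_{2})$; it cannot exclude the scenario in which the rational form carries the entire shadow while $\Sh(\Gamma^{3}_{ggf_{p}},\Delta)=0$ for all $g$ in the five-dimensional orbit. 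This is precisely the failure mode you flag in your final paragraph, but you wrongly assert it does not occur among the eight levels. The paper's proof instead uses the relation $\sum_{g}\Sh(\Gamma^{3}_{ggf_{p}},\Delta)=0$ together with $d-1$ further primes $\ell$ to obtain a determined $d\times d$ system whose coefficient matrix (eigenvalues of the chosen Hecke operators, one column per eigenform) is invertible because the eigenforms are pairwise distinct; this pins down each component as an explicit combination of known points and does not require a single-orbit hypothesis. Your proposed fallback via orbit projectors $\pi_{[g_i]}\in\mathbb{T}_{p}\otimes\mathbb{Q}$ (or simply adding a second prime and solving the resulting orbit-level system) would repair the argument, but it must actually be carried out for $p=89$ rather than dismissed.
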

\begin{proof}
 By the previous result, $\Sh(\Gamma^{3}, \Delta) = 0$ and thus 
\begin{center}
    $ \displaystyle \sum_{g} \Sh(\Gamma^{3}_{ggf_{p}}, \Delta) =0$
\end{center}
where the sum is taken over a basis for the space of weight 2 cusps forms with Atkin-Lehner sign $-1$. Moreover, for all primes $l \ne p$
\begin{center}
    $ \Sh( \Gamma^{3}, T_{l}) = \displaystyle \sum_{g} a_{g,l}\Sh( \Gamma_{ggf_{p}}^{3}, \Delta)  $
\end{center}
where $a_{g,l}$ is the eigenvalue of $T_{l}$ acting on $g$. Let $d$ be the dimension of the $-1$ Atkin-Lehner eigenspace. Taking $d-1$ different primes $l$, with at least one $\Sh(\Gamma^{3}, T_{l}) \ne 0$ (such a prime exists from our proof of Theorem \ref{THM1}), gives a system of $d$ equations in $d$ variables. The corresponding matrix (whose entries are the eigenvalues of the chosen Hecke operators) is invertible, since each column corresponds to a different embedding of the number field over which the cusp forms are defined. It follows by elementary linear algebra that $ \Sh( \Gamma_{ggf_{p}}^{3}, \Delta) \ne 0$ for all $g$.

\end{proof}

We now turn to the $L$-function side of the conjecture.
\begin{proposition}
 Let  $ p \in \{ 43, 53, 61, 79, 83, 89, 101, 131 \}$. For all $g \in \mathcal{S}_{2}(p)$ with Atkin-Lehner sign $-1$, $L'(g \otimes g \otimes f_{p}, 2) \ne 0 $.
 \end{proposition}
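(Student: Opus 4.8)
The plan is to combine the sign of the triple product functional equation with a factorization of the $L$-function. Write $F = g\otimes g\otimes f_{p}$. Since $g$ and $f_{p}$ are weight-two newforms of prime level $p$, their local components at $p$ are unramified twists of the Steinberg representation, and the weights $(2,2,2)$ are balanced; hence the archimedean local root number is $+1$, and by the Gross--Kudla sign formula \cite{gross1992heights} the local root number at $p$ equals $-w_{p}(g)w_{p}(g)w_{p}(f_{p}) = -(-1)(-1)(+1) = -1$, while every other place contributes $+1$. Thus the global root number of $L(F,s)$ is $-1$, so $L(F,2)=0$ and $L'(F,2)$ is the leading Taylor coefficient at the centre $s=2$; it therefore suffices to prove that $L(F,s)$ vanishes to order exactly one there.

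Next I would use the decomposition $\rho_{g}\otimes\rho_{g}\cong \mathrm{Sym}^{2}\rho_{g}\oplus\Q_{\ell}(-1)$ of $\ell$-adic Galois representations, which yields the factorization
\[
L(F,s) = L(\mathrm{Sym}^{2}g\otimes f_{p},\, s)\cdot L(f_{p},\, s-1),
\]
each factor being self-dual with functional equation symmetric about $s=2$ (the centre of $L(f_{p},\cdot)$ itself being $s=1$). Hence $\mathrm{ord}_{s=2}L(F,s)=\mathrm{ord}_{s=2}L(\mathrm{Sym}^{2}g\otimes f_{p},s)+\mathrm{ord}_{s=1}L(f_{p},s)$. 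For the second term: because $X_{0}(p)$ is bielliptic, $J_{0}^{+}(p)$ is the elliptic curve $E_{p}$ of conductor $p$ attached to the unique newform $f_{p}$ with $w_{p}$-eigenvalue $+1$; its functional equation has sign $-w_{p}(f_{p})=-1$, so $L(f_{p},1)=0$, and moreover the previous proposition produces a point of infinite order on the $f_{p}$-isotypic quotient $E_{p}$ of $J_{0}(p)$. Together with the fact that each of the eight curves $E_{43},\dots,E_{131}$ has analytic rank one (known, and directly verifiable with rigorous error bounds), this gives $\mathrm{ord}_{s=1}L(f_{p},s)=1$. It remains to prove $L(\mathrm{Sym}^{2}g\otimes f_{p},2)\neq 0$.

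For this last point, observe that the central value $L(\mathrm{Sym}^{2}g\otimes f_{p},2)$ has root number $+1$ (it is the quotient $(-1)/(-1)$ of the root numbers of the two factors), so no parity obstruction forces its vanishing and its order of vanishing is even; I must rule out order $\geq 2$. Lacking a general non-vanishing theorem for central values of $\mathrm{GL}_{3}\times\mathrm{GL}_{2}$ Rankin--Selberg $L$-functions, for the eight primes $p$ and the finitely many Galois orbits of newforms $g$ of level $p$ with $w_{p}(g)=-1$ I would verify $L(\mathrm{Sym}^{2}g\otimes f_{p},2)\neq 0$ by direct numerical evaluation through the approximate functional equation (equivalently, by evaluating $L'(F,2)$ itself with a rigorous bound on the truncation error); the Dirichlet coefficients of all $L$-functions involved are computable from the Hecke eigenvalues of $g$ and $f_{p}$.

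The main obstacle is exactly this last step: the non-vanishing of the symmetric-square central value admits no uniform theoretical proof, so the argument must be computational for these cases, the conceptual inputs (root number $-1$, the factorization, analytic rank one of $E_{p}$) serving only to reduce it to a finite verification. A purely conceptual alternative would invoke the arithmetic triple product formula of Yuan--Zhang--Zhang \cite{yuanzhangzhang} together with positivity of the Beilinson--Bloch height on the $f_{p}$-isotypic part --- where it specializes to the positive-definite N\'eron--Tate height on $E_{p}$ applied to the non-torsion shadow point of the previous proposition --- but this relies on the full strength of results whose hypotheses would themselves need to be checked in the present setting.
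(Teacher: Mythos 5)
Your proposal is correct and takes essentially the same route as the paper: the factorisation $L(g\otimes g\otimes f_{p},s)=L(\mathrm{Sym}^{2}g\otimes f_{p},s)\,L(f_{p},s-1)$ reduces the claim to $L'(f_{p},1)\neq 0$ and $L(\mathrm{Sym}^{2}g\otimes f_{p},2)\neq 0$, which the paper likewise verifies by direct numerical computation (in \texttt{Magma}, after correcting the local $L$-factor at $p$). Your additional root-number and order-of-vanishing bookkeeping only makes explicit what the paper's two-line proof leaves implicit.
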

\begin{proof}
  We make use of the factorisation $L(g \otimes g \otimes f_p) = L(f_p \otimes \mathrm{Sym}^2 g, s) L(f_p, s - 1)$ to reduce calculating $L'(g \otimes g \otimes f_p, 2)$ to $L'(f_p, 1)$ and $L(f_p \otimes \mathrm{Sym}^2 g, 2)$. These can be computed using the computer algebra package $\mathtt{Magma}$, after correcting the local $L$-factor at $p$.
\end{proof}

In theory, such calculations can be carried out for any $X_{0}(N)$.
\begin{eg}
 Let $X = X_{0}(67)$. The $+$ space is spanned by two conjugate cusp forms $h_{1}, h_{2}$ defined over $\Q(\sqrt{5})$; whilst the $-$ space is spanned by a rational cusp form $f$ and a pair of Galois conjugate forms $g_{1}, g_{2}$ defined over $ \Q( \sqrt{5})$. 
 
 We compute models of $X_{0}(67)$ and $ X_{0}^{+}(67)$, along with an isomorphism $J_{0}^{+}(67)(\Q) \cong \mathbb{Z}^{2}$. Shadow points can be determined explicitly using our models and various facts about quadratic points proved in \cite{box2021quadratic}. We push-forward the divisors onto $J_{0}^{+}(67)(\mathbb{Q})$ to find: 
 \begin{center}
  $3\Sh(T_{2}) = 24(1,3), 3 \Sh(T_{3}) = 24(-10, 3), 3 \Sh(T_{7}) = 24( -1, -7), 3 \Sh(w_{67}) = 48(1, -1),$   
 \end{center}
 with respect to a fixed basis of $J_{0}^{+}(67) \cong \mathbb{Z}^{2}$.
Along with $\Sh( \Delta) =0$, and the images of these 5 points under $T_2$, we obtain a system of 10 equations in the 10 variables $ a_{s,t} = \Sh( \Gamma^{3}_{sst}, \Delta)$ where $s \in \{ h_{1},h_{2}, f, g_{1}, g_{2} \} $ and $t \in \{ h_{1}, h_{2} \}$.
Explicitly solving this system, we find that  $\Sh(\Gamma^{3}_{h_{1}h_{1}h_{1}}, \Delta) = \Sh(\Gamma^{3}_{h_{2}h_{2}h_{2}}, \Delta) = 0 $,  which is consistent with the vanishing of the respective triple product $L$-function. All other Hecke components are non-zero, as are the corresponding derivatives of the associated $L$-function. 
\end{eg}

\bibliographystyle{abbrv}
\bibliography{ref}

\begin{thebibliography}{10}

\bibitem{agashe2005visible}
A.~Agashe and W.~Stein.
\newblock {Visible evidence for the Birch and Swinnerton-Dyer conjecture for modular abelian varieties of analytic rank zero}.
\newblock {\em Mathematics of Computation}, 74(249):455--484, 2005.

\bibitem{bars1999bielliptic}
F.~Bars.
\newblock {Bielliptic Modular Curves}.
\newblock {\em Journal of Number Theory}, 76(1):154--165, 1999.

\bibitem{Bertolini_Castella_Darmon_Dasgupta_Prasanna_Rotger_2014}
M.~Bertolini, F.~Castella, H.~Darmon, S.~Dasgupta, K.~Prasanna, and V.~Rotger.
\newblock {\em $p$-adic $L$-functions and Euler systems: a tale in two trilogies}, page 52–101.
\newblock London Mathematical Society Lecture Note Series. Cambridge University Press, 2014.

\bibitem{Birch1959}
B.~Birch and H.~Swinnerton-Dyer.
\newblock Note on a problem of chowla.
\newblock {\em Acta Arithmetica}, 5(4):417--423, 1959.

\bibitem{bloch1984algebraic}
S.~Bloch.
\newblock Algebraic cycles and values of l-functions.
\newblock {\em Journal für die reine und angewandte Mathematik}, 350:94--108, 1984.

\bibitem{bosch2012neron}
S.~Bosch, W.~L{\"u}tkebohmert, and M.~Raynaud.
\newblock {\em {N{\'e}ron models}}, volume~21.
\newblock Springer Science \& Business Media, 2012.

\bibitem{box2021quadratic}
J.~Box.
\newblock Quadratic points on modular curves with infinite mordell--weil group.
\newblock {\em Mathematics of Computation}, 90(327):321--343, 2021.

\bibitem{buhler1997cycles}
J.~Buhler, C.~Schoen, and J.~Top.
\newblock Cycles, l-functions and triple products of elliptic curves.
\newblock {\em Journal fur die reine und angewandte mathematik}, 492:93--133, 1997.
\newblock Relation: http://www.rug.nl/fmns-research/bernoulli/index Rights: University of Groningen, Johann Bernoulli Institute for Mathematics and Computer Science.

\bibitem{ceresa1983c}
G.~Ceresa.
\newblock {$C$ is not algebraically equivalent to $C-$ in its Jacobian}.
\newblock {\em Annals of Mathematics}, 117(2):285--291, 1983.

\bibitem{cremona1997algorithms}
J.~E. Cremona.
\newblock {\em {Algorithms for Modular Elliptic Curves Full Canadian Binding}}.
\newblock CUP Archive, 1997.

\bibitem{darmon2015algorithms}
H.~Darmon, M.~Daub, S.~Lichtenstein, and V.~Rotger.
\newblock {Algorithms for Chow-Heegner points via iterated integrals}.
\newblock {\em Mathematics of Computation}, 84(295):2505--2547, 2015.

\bibitem{darmon2012iterated}
H.~Darmon, V.~Rotger, and I.~Sols.
\newblock {Iterated integrals, diagonal cycles and rational points on elliptic curves}.
\newblock {\em Publications math{\'e}matiques de Besan{\c{c}}on. Algebre et th{\'e}orie des nombres}, (2):19--46, 2012.

\bibitem{deligne1974conjecture}
P.~Deligne.
\newblock {La Conjecture de Weil I}.
\newblock {\em Publications Math{\'e}matiques de l'Institut des Hautes {\'E}tudes Scientifiques}, 43:273--307, 1974.

\bibitem{deligne1973schemas}
P.~Deligne and M.~Rapoport.
\newblock {Les sch{\'e}mas de modules de courbes elliptiques}.
\newblock In {\em Modular Functions of One Variable II: Proceedings International Summer School University of Antwerp, RUCA July 17--August 3, 1972}, pages 143--316. Springer, 1973.

\bibitem{diamond2005dimension}
F.~Diamond and Shurman.
\newblock {\em {A First Course in Modular Forms}}.
\newblock Springer, 2005.

\bibitem{dogra2021quadratic}
N.~Dogra and S.~Le~Fourn.
\newblock {Quadratic Chabauty for modular curves and modular forms of rank one}.
\newblock {\em Mathematische Annalen}, 380:393--448, 2021.

\bibitem{Drinfeld}
V.~G. Drinfeld.
\newblock {Two theorems on modular curves}.
\newblock {\em Funkcional. Anal. i Prilo\v{z}en.}, 7(2):83--84, 1973.

\bibitem{ellenberg2024certifyingnontrivialityceresaclasses}
J.~Ellenberg, A.~Logan, and P.~Srinivasan.
\newblock {Certifying nontriviality of Ceresa classes of curves}, 2024.

\bibitem{eskandari2021ceresa}
P.~Eskandari and V.~K. Murty.
\newblock {On Ceresa cycles of Fermat curves}.
\newblock {\em J. Ramanujan Math. Soc.}, 36(4):363--382, 2021.

\bibitem{galbraith1996equations}
S.~Galbraith and S.~Galbraith.
\newblock {\em {Equations for Modular Curves}}.
\newblock PhD thesis, University of Oxford, 1996.

\bibitem{garrett1987decomposition}
P.~B. Garrett.
\newblock {Decomposition of Eisenstein series: Rankin triple products}.
\newblock {\em Annals of Mathematics}, 125(2):209--235, 1987.

\bibitem{gross1984heegner}
B.~H. Gross.
\newblock {Heegner points on $X_{0}(N)$}.
\newblock {\em Modular forms (Durham, 1983)}, pages 87--105, 1984.

\bibitem{gross1992heights}
B.~H. Gross and S.~S. Kudla.
\newblock {Heights and the central critical values of triple product $ L $-functions}.
\newblock {\em Compositio Mathematica}, 81(2):143--209, 1992.

\bibitem{gross1986heegner}
B.~H. Gross and D.~B. Zagier.
\newblock {Heegner points and derivatives of L-series}.
\newblock {\em Inventiones mathematicae}, 84:225--320, 1986.

\bibitem{harris1983homological}
B.~Harris.
\newblock {Homological versus algebraic equivalence in a Jacobian}.
\newblock {\em Proceedings of the National Academy of Sciences}, 80(4):1157--1158, 1983.

\bibitem{katz}
N.~M. Katz.
\newblock {Galois properties of torsion points on abelian varieties}.
\newblock {\em Invent. Math.}, 62(3):481--502, 1981.

\bibitem{kerr2024non}
M.~Kerr, W.~Li, C.~Qiu, and T.~Yang.
\newblock {Non-vanishing of Ceresa and Gross--Kudla--Schoen cycles associated to modular curves}.
\newblock {\em arXiv preprint arXiv:2407.20998}, 2024.

\bibitem{kimura2000modified}
K.-i. Kimura.
\newblock {On modified diagonal cycles in the triple products of Fermat quotients}.
\newblock {\em Mathematische Zeitschrift}, 235:727--746, 2000.

\bibitem{laga2023ceresa}
J.~Laga and A.~Shnidman.
\newblock {Ceresa cycles of bielliptic Picard curves}.
\newblock {\em arXiv preprint arXiv:2312.12965}, 2023.

\bibitem{laga2024vanishing}
J.~Laga and A.~Shnidman.
\newblock {Vanishing criteria for Ceresa cycles}.
\newblock {\em arXiv preprint arXiv:2406.03891}, 2024.

\bibitem{lilienfeldt2023experiments}
D.~Lilienfeldt and A.~Shnidman.
\newblock {Experiments with Ceresa classes of cyclic Fermat quotients}.
\newblock {\em Proceedings of the American Mathematical Society}, 151(03):931--947, 2023.

\bibitem{Manin}
J.~I. Manin.
\newblock {Parabolic points and zeta functions of modular curves}.
\newblock {\em Izv. Akad. Nauk SSSR Ser. Mat.}, 36:19--66, 1972.

\bibitem{mazur1977modular}
B.~Mazur.
\newblock {Modular curves and the Eisenstein ideal}.
\newblock {\em Publications Math{\'e}matiques de l'Institut des Hautes {\'E}tudes Scientifiques}, 47(1):33--186, 1977.

\bibitem{ogg1972rational}
A.~Ogg.
\newblock {Rational Points on Certain Elliptic Modular Curves}.
\newblock {\em AMS Conference, St. Louis, 1972}, pages 211--231, 1972.

\bibitem{ogg1974hyperelliptic}
A.~P. Ogg.
\newblock {Hyperelliptic Modular Curves}.
\newblock {\em Bulletin de la Soci{\'e}t{\'e} Math{\'e}matique de France}, 102:449--462, 1974.

\bibitem{otsubo2012abel}
N.~Otsubo.
\newblock {On the Abel--Jacobi maps of Fermat Jacobians}.
\newblock {\em Mathematische Zeitschrift}, 270(1):423--444, 2012.

\bibitem{piatetski1987rankin}
I.~Piatetski-Shapiro and S.~Rallis.
\newblock {Rankin triple L functions}.
\newblock {\em Compositio Mathematica}, 64(1):31--115, 1987.

\bibitem{qiu2023finiteness}
C.~Qiu.
\newblock {Finiteness properties for Shimura curves and modified diagonal cycles}.
\newblock {\em arXiv preprint arXiv:2310.20600}, 2023.

\bibitem{qiu2023vanishingresultsmodifieddiagonal}
C.~Qiu and W.~Zhang.
\newblock {Vanishing results for the modified diagonal cycles II: Shimura curves}, 2023.

\bibitem{Qiu_2024}
C.~Qiu and W.~Zhang.
\newblock {Vanishing results in Chow groups for the modified diagonal cycles}.
\newblock {\em Tunisian Journal of Mathematics}, 6(2):225–247, June 2024.

\bibitem{tadokoro2016harmonic}
Y.~Tadokoro.
\newblock {Harmonic volume and its applications}.
\newblock {\em arXiv preprint arXiv:1602.02267}, 2016.

\bibitem{xue2009minimal}
H.~Xue.
\newblock {Minimal resolution of Atkin--Lehner quotients of $X_{0}(N)$}.
\newblock {\em Journal of Number Theory}, 129(9):2072--2092, 2009.

\bibitem{yuanzhangzhang}
X.~Yuan, S.-W. Zhang, and W.~Zhang.
\newblock Triple product l-series and gross–kudla–schoen cycles.

\bibitem{zhang2010gross}
S.-w. Zhang.
\newblock {Gross-Schoen cycles and dualising sheaves}.
\newblock {\em Inventiones Mathematicae}, 179(1):1, 2010.

\end{thebibliography}
\end{document}